\documentclass{eajam}
\setcounter{page}{1}


\usepackage{charter}
\usepackage[charter]{mathdesign}
\usepackage{booktabs}
\usepackage{graphicx,wrapfig}
\usepackage{algpseudocode,algorithm,algorithmicx}
\usepackage{amssymb,amsmath,epsfig}
\usepackage{epstopdf}
\usepackage{stmaryrd}
\graphicspath{figures/}

\def\bs{\boldsymbol}
\def\il{\interleave}

\newcommand{\zd}{\,\mathrm{d}}
\newcommand{\diff}{\triangledown_{\tau}}

\newcommand{\abs}[1]{\left|#1\right|}
\newcommand{\absb}[1]{\big|#1\big|}
\newcommand{\absB}[1]{\Big|#1\Big|}

\newcommand{\brab}[1]{\big(#1\big)}
\newcommand{\braB}[1]{\Big(#1\Big)}

\newcommand{\brat}[1]{(#1)}
\newcommand{\kbra}[1]{\left[#1\right]}

\newcommand{\kbrat}[1]{[#1]}
\newcommand{\myinner}[1]{\left\langle#1\right\rangle}

\newcommand{\mynorm}[1]{\left\|#1\right\|}
\newcommand{\mynormb}[1]{\big\|#1\big\|}


\newcommand{\h}[1]{\mathbf{#1}}


\begin{document}

\markboth{X. Zhao, H. F. Zhang and H. Sun}{Variable-step BDF2 for MBE with slope selection}
\title{Error analysis of the implicit variable-step BDF2 method for the molecular beam epitaxial model with slope selection}


\author[Xuan Zhao, Haifeng Zhang and Hong Sun]{Xuan Zhao\affil{1}\comma\corrauth , Haifeng Zhang\affil{1} and Hong Sun\affil{1,2} }
\address{\affilnum{1}\ School of
			Mathematics, Southeast University, Nanjing 210096, P. R. China\\
\affilnum{2}\ Department of Mathematics and Physics, Nanjing Institute of Technology, Nanjing 211167}
%
%
\emails{{\tt xuanzhao11@seu.edu.cn} (X. Zhao), {\tt 220211744@seu.edu.cn} (H. F. Zhang), {\tt sunhongzhal@126.com} (H. Sun)}
%
\begin{abstract}
We derive unconditionally stable and convergent variable-step BDF2 scheme for solving the MBE model with slope selection. The discrete orthogonal convolution kernels of the variable-step BDF2 method is commonly utilized recently for solving the phase field models. In this paper, we further prove some new inequalities, concerning the vector forms, for the kernels especially dealing with the nonlinear terms in the slope selection model. The convergence rate of the fully discrete scheme is proved to be two both in time and space in $L^2$ norm under the setting of the variable time steps.  Energy dissipation law is proved rigorously with a modified energy by adding a small term to the discrete version of the original free energy functional. Two numerical examples including an adaptive time-stepping strategy are given to verify the convergence rate and the energy dissipation law.
\end{abstract}

\keywords{molecular beam epitaxial growth, slope selection, variable-step BDF2 scheme, energy stability, convergence.}

\ams{35Q92, 65M06, 65M12, 74A50}

\maketitle



\vskip5mm
	\section{Introduction}
\setcounter{equation}{0}

Over the past decades, the dynamics of molecular beam epitaxy (MBE) model attracted broad interest from the fields of chemistry, material science, mathematics and etc.  The epitaxial growth process offers a controllable method to obtain lateral heterojunction, with an atomically sharp interface, for some attractive materials in making smaller transistors\cite{Li+Science+2015,Wang+Nat+2019}. Atomistic models, continuum models and hybrid models, from various scales, are applied to study the evolution of the surface morphology during epitaxial growth. MBE is the most widely used technique for growing thin epitaxial layers of semiconductor crystals and metallic materials\cite{2018+Jenichen+PRM}. In addition, Nair et al.\cite{2018+Nair+AM} introduced the growth of superconducting $Sr_2RuO_4$ thin films by MBE on (110)$NdGaO_3$ substrates with transition temperatures of up to 1.8 K.

In this paper, we consider the MBE model with slope selection in the two-dimensional domain $\Omega=(0,L)^2\subset \mathbb{R}^2$. Let $u(\mathbf{x},t)$ be the epitaxy surface height with space variable $\mathbf{x}\in \Omega$ and time variable $t\ge 0,$ the height evolution equation \cite{Moldovan+2000+PRE} is expressed as follows
\begin{align}\label{cont: MBE model}
	u_t+\delta\Delta^2u-\nabla\cdot f(\nabla u)=0,~~\mathbf{x}\in\Omega,~0<t\le T,
\end{align}
subjected to the periodic boundary conditions and the initial data $u(\mathbf{x},0)=\varphi_0(\mathbf{x})$. Here, $\delta>0$ is the constant that represents the width of the rounded corners on the otherwise faceted crystalline thin films. The vector $f$ is the nonlinear bulk force, defined by
\begin{align}\label{cont: nonlinear force}
	f(\mathbf{v})=\brat{\abs{\mathbf{v}}^2-1}\mathbf{v}.
\end{align}
When $t\rightarrow\infty,$ one obtains $\left|\nabla\phi\right|\rightarrow1,$ that is why it is called the model with slope selection. There is also a counterpart model, in which $f(\mathbf{v})=-\mathbf{v}/(1+\abs{\mathbf{v}}^2)$, called MBE model without slope selection due to that during the coarsening process $\left|\nabla\phi\right|$ does not converge to a constant.
For any $u\in H^1(\Omega),$ define the energy function by
 $$E(t)=\int_0^t\|u_t(\cdot,\cdot,s)\|^2 ds+\frac{\delta}{2}\|\Delta u(\cdot,\cdot,t)\|^2+\frac{1}{4}\iint\limits_{\Omega}(|\nabla u(x,y,t)|^2-1)^2 dxdy.$$
The following energy dissipation law holds
$$\frac{dE(t)}{dt}=0,~~t>0.$$

The model \eqref{cont: MBE model} has been applied to modeling interfacial coarsening dynamics in epitaxial growth with slope selection, where the fourth-order term models surface diffusion, and the nonlinear second-order term models the well-known Ehrlich-Schowoebel effect, which consequently leads to the formation of mounds and pyramids on the growing surface. Gyure et al. \cite{1998+Gyure+PRL} conducted an experiment to show the unstable growth of thin films on rough surfaces. The MBE of InAs buffer layers is performed on InAs(001) substrates, in the experiment, which exhibit large-small-large wavelength oscillations as the thickness of buffer layers increasing. This morphological instability in the rough-smooth-rough pattern is fundamentally due to the Ehrlich-Schwoebel effect.

The well-posedness for the growth equation with slope selection for different boundary conditions was studied in King et al.\cite{2000+King}. Li and Liu \cite{2003+Li+EJAM} proved the well-posedness and the solution regularity for the initial-boundary-value thin film epitaxy model. The Galerkin spectral method was applied to solve the numerical solution of the model with or without slope selection. In addition, numerical results showed the decay of energy and roughness at different time stages. Li et al. \cite{2017+Li+JDE} analyzed the gradient flow modeling the epitaxial growth of thin films with slope selection in physical dimensions. The improved local and global well-posedness for solutions with critical regularity were established. Several lower and upper bounds for the gradient were obtained.

Due to the high order derivatives and the nonlinear term, it takes a long time to reach the steady state in the dynamics of the MBE model. As is well known that the linearized schemes can avoid solving large nonlinear systems, whereas, the stabilized term usually needs to be added to the scheme in order to guarantee the stability of the scheme. However, the nonlinear schemes, which cause large computational cost,  are usually stable. How to develop proper temporal discretization for the nonlinear term is a key issue to preserve energy stability at the time-discrete level and balance the computational cost. The splitting schemes are adequate choices for the fast simulation. Moreover, variable time-stepping methods are also proved as the efficient techniques, which are fundamentally difficult in the analysis for the long time simulation.

Existing attentions are given to apply the linearized schemes for solving the MBE models with slope selection. The linearized backward Euler difference scheme and the linearized Crank-Nicolson difference scheme were derived in \cite{2012+Qiao+NMPDE}. Yang et al.\cite{2017+Yang+JCP} developed a first and second order time-stepping scheme based on the Invariant Energy Quadratization method, in which all nonlinear terms were treated semi-explicitly. Besides, there were also schemes that result in linear systems at each time step(cf., e.g., \cite{2018+Chen+ANM,2019+Chen+CCP}). For the MBE models without slope selection, we further refer to the monograph \cite{2018+Li+JSCwithout,2019+Cheng+JSCwithout,2020+Chen+RMSwithout,2020+Chen+EMMNAwithout,2021+Zhang+IJNAMwithout,2021+Li+JSCwithout,2021+Hao+CCPwithout} on the linearized schemes.
The stabilized terms are usually utilized in order to preserve the stability of the linearized schemes. Xu and Tang\cite{2006+Xu+SINUM} constructed linearized schemes added with the stabilized terms, which are consistent with the orders of the time discretizations, and showed that the schemes allow much larger time steps than those of a standard implicit-explicit approach. Li et al.\cite{2016+Li+SIAM} proved the unconditional energy stability for the stabilized semi-implicit time-stepping methods without the Lipschitz assumption on the nonlinearity. Utilizing a regularized term, Chen et al.\cite{2019+Chen+CCP} proposed a fully discrete scheme, which preserves energy-dissipation property, for the MBE model with slope selection. The scaling law for the roughness growing and effective energy decaying are captured in the long time simulations. Other cases concerning the effect of the stabilized terms in solving MBE models with slope selection can be found in \cite{2018+Feng+NMPDE,2020+Wang+JCAM}. We also refer the reader to the references therein \cite{2014+Chen+JSCwithout,2018+Li+JSCwithout,2019+Chen+JSCwithout,2019+Cheng+JSCwithout,2020+Chen+RMSwithout,2020+Chen+EMMNAwithout} on the stabilized terms for the MBE models without slope selection.

Whereas, the nonlinear schemes are also selected for numerically solving MBE models due to its advantages in preserving stability in the long time computations. Chen and Wang\cite{2012+Chen+NM}  presented a semi-implicit nonlinear scheme which combined the mixed finite element method and the backward Euler scheme for the thin film epitaxy problem with slope selection. The mixed formulation only needs to use $C^1$ elements by introducing proper dual variables, which are defined naturally from the nonlinear term in the equation. Feng et al.\cite{2018+Feng+NMPDE} studied an implicit nonlinear finite difference scheme using two-step backward differentiation formula (BDF2) method with constant coefficient stabilizing terms for the epitaxial thin film equation with slope selection. The efficient preconditioned steepest descent and the preconditioned nonlinear conjugate gradient algorithms were applied to solve the corresponding nonlinear system. An energy stable, nonlinear mixed finite element scheme was proposed and analyzed for the thin film epitaxial growth model with slope selection\cite{2020+Wang+JCAM}. An optimal convergence rate was obtained with the help of some auxiliary techniques over triangular elements. Additional theoretical frameworks for nonlinear schemes were described in \cite{2015+Qiao+SINUM,2020+Luo+AML}. Furthermore, for the nonlinear schemes for the MBE model without slope selection, we refer to \cite{2014+Chen+JSCwithout,2015+Qiao+MCwithout,2022+Kang+JSCwithout} and the references therein.

One approach to achieve fast simulation appropriately in the presence of the nonlinear terms in MBE model is the splitting method. Cheng et al.\cite{2015+Cheng+JCP} introduced fast explicit operator splitting methods for both one- and two-dimensional nonlinear diffusion equations for thin film epitaxy with slope selection. A fast explicit operator splitting method, which splits the original problem into linear and nonlinear subproblems, was proposed for the epitaxial growth model with slope selection\cite{2017+Li+SINUM}. The convergence rate of the algorithm in discrete $L^2$ norm was analyzed theoretically. Lee et al.\cite{2017+Lee+JSC} developed an operator splitting Fourier spectral method, which alleviates restriction on the time steps, for epitaxial thin film growth with and without slope selection. Different forms about the splitting method for MBE model with slope selection were mentioned in \cite{2010+Wang+DCD,2012+Shen+SINUM,2015+Feng+SINUM,2017+Qiao+IJNAMwith,2019+Cheng+JSC}. As a supplement, we refer to \cite{2012+Chen+JSCwithout,2014+Chen+JSCwithout,2015+Xia+JCPwithout,2018+Ju+MCwithout,2021+Shin+ANMwithout} for the splitting method in the MBE models without slope selection.

An efficient approach for avoiding large computational cost in the long time simulation is the adaptive technique. Qiao et al.\cite{2011+Qiao+SISC} applied time adaptivity strategies for some unconditionally energy stable finite difference schemes and showed that the steady-state solutions and the dynamical changes of the solution are resolved accurately and efficiently. Luo et al.\cite{2020+Luo+AML} developed two types of adaptive time-stepping methods in which equidistribution of the physical quantities in time direction was taken to control the simulation error. Liao et al.\cite{2021+Liao+SCMwithout} introduced the BDF2 scheme with variable steps for the MBE model without slope selection, the $L^2$ norm stability and rigorous error estimates of which were established under an improved step-ratio constraint.
A detailed survey on all related literature would exceed the scope of this paper. Therefore, we confine ourselves to the papers mentioned above and the references therein.

Over the last 20 years, variable contributions have delineated the numerical computations of MBE models. In the current work, we focus on the analysis of the variable-step BDF2 scheme for the model with slope selection. We have followed the analysis of variable-step BDF2 scheme for Cahn-Hilliard model in \cite{2022-Ji+JSC}. In particular, for the kernels and the time-step ratios derived in \cite{2022-Ji+JSC}, we proved Lemma \ref{lem:inequality of the kernel}, Lemma \ref{lem: DOC property2} and Lemma \ref{nonlinear-embedding inequality} concerning the vector forms dealing with the nonlinear terms, for analyzing convergence properties and deriving error estimates of the proposed scheme. The unique solvability and the energy stability of the scheme is demonstrated by virtue of the properties of the convolution coefficients under the same mild time-step ratios restriction.

The rest of the paper is structured as follows. In section 2, we establish an implicit variable-step BDF2 scheme and introduce some preliminary lemmas that facilitate the unique solvability and the energy dissipation law of the difference scheme. In section 3, we introduce some fundamental properties and several discrete convolution inequalities with respect to the discrete orthogonal convolution (DOC) kernels which help demonstrating the error estimate of the proposed scheme. We perform and discuss typical numerical examples in section 4 to verify the theoretical results.

\section{Discrete energy dissipation law} 
\setcounter{equation}{0}

In this section, we investigate the unique solvability of the difference scheme (\ref{eq: fully BDF2 implicit scheme}) based on the Brouwer fixed-point theorem. By virtue of the properties of the convolution coefficients, the energy stability of the scheme is demonstrated. We start with the introduction of some notations.

For the spatial direction discretization, let $M$ be a positive integer, $\Omega=(0,L)^2$ and $x_i=ih$, $y_j=jh$ with the spatial lengths $h_x=h_y=h:=L/M.$
Set the discrete spatial grid
$\Omega_h:=\big\{(x_i, y_j)~|~1\le i, j\le M\big\}$
and $\bar{\Omega}_h:=\big\{(x_i, y_j)~|~0\le i, j\le M\big\}.$
Consider the $L$-periodic function space
$$\mathbb{V}_h:=\big\{v_h=v(\mathrm{x}_h)~|~\mathrm{x}_h=(x_i, y_j)\in \bar{\Omega}_h
\;\text{and $v_h$ is $L$-periodic in each direction}\big\}.$$
Given a grid function $v\in \mathbb{V}_h,$ introduce the following notations
$\delta_xv_{i+\frac{1}{2},j}=(v_{i+1,j}-v_{ij})/h$, $\Delta_xv_{ij}=(v_{i+1,j}-v_{i-1,j})/(2h),$ and $\delta^2_xv_{ij}=(\delta_xv_{i+\frac{1}{2},j}-\delta_xv_{i-\frac{1}{2},j})/h.$
The discrete notations $\delta_yv_{i,j+\frac{1}{2}}$, $\Delta_yu_{ij}$ and $\delta^2_yv_{ij}$ can be defined similarly.
Also, we define the discrete Laplacian operator  $\Delta_hv_{ij}=\delta^2_xv_{ij}+\delta^2_yv_{ij}$ and
the discrete gradient vector $\nabla_hv_{ij}=(\Delta_xv_{ij},~\Delta_yv_{ij})^T.$

For any grid functions $v,w\in\mathbb{V}_h,$ define the inner product
$\myinner{v,w}:=h^2\sum\limits_{\mathrm{x}_h\in\Omega_h}v_hw_h,$  the associated $L^2$ norm
$\mynorm{v}:=\sqrt{\myinner{v,v}}$,
and the discrete $L^q$ norm
$\mynorm{v}_{l^q}:=\sqrt[q]{h^2\sum_{\mathrm{x}_h\in\Omega_h}\abs{v_h}^q}.$
The discrete seminorms $\|\nabla_hv\|_{l^q}$ and $\mynorm{\Delta_hv}_{l^q}$ are defined similarly.
In addition, $\|\delta_xv\|$ and $ \|\delta_yv\|$ are written as $$\|\delta_xv\|=\sqrt{h^2\sum_{\mathrm{x}_h\in\Omega_h}
\big(\delta_xv_{i-\frac{1}{2}, j}\big)^2},\; \|\delta_yv\|=\sqrt{h^2\sum_{\mathrm{x}_h\in\Omega_h}
\big(\delta_yv_{i, j-\frac{1}{2}}\big)^2}.$$  The discrete $H^1$-seminorm is defined by $|v|_1=\sqrt{\|\delta_xv\|^2+\|\delta_yv\|^2}.$ Furthermore,
the discrete Green's formula with periodic boundary conditions yield
$\myinner{\Delta_h^2 v,w}=\myinner{\Delta_hv,\Delta_hw}$ and $-\myinner{\nabla_h\cdot\nabla_hv, w}=\myinner{\nabla_hv, \nabla_hw}$.
In the subsequent analysis, we need the commonly used discrete Sobolev embedding inequality
 \begin{align}
 \mynorm{\nabla_hv}^2\le \mynorm{\Delta_hv}\cdot\mynorm{v}.\label{embedding inequality}
 \end{align}

For the time discretization, take time levels $0=t_0<t_1<t_2<\cdots<t_N=T$ with the time-step $\tau_n=t_n-t_{n-1}$ for $1\le n\le N.$
Let the adjacent time-step ratios $r_n:=\tau_n/\tau_{n-1}$ for $2\le n\le N.$
For any grid function $v^n=v(t_n)$, we denote $\diff v^n:=v^n-v^{n-1}$ and $\partial_{\tau}v^n:=\diff v^n/\tau_n$.
The well-known variable-step BDF2 formula reads
$$D_2v^n:=\frac{1+2r_n}{\tau_n(1+r_n)}\diff v^n-\frac{r_n^2}{\tau_n(1+r_n)}\diff v^{n-1}\quad\text{for $n\geq 2$}.$$
Always, one needs a starting scheme to compute the first-level solution $v^1$
since the two-step BDF2 formula needs two starting values.
To improve the temporal accuracy at the time $t=t_1$, we adopt a second-order accurate approach
$$D_2v^1:=\frac{2}{\tau_1}\kbra{v^1-\brab{v(t_0)+\frac{\tau_1}{2}v_t(t_0)}}$$
using the fact that $\frac1{2}\kbrat{v_t(t_1)+v_t(t_0)}=\frac1{\tau_1}\brat{v^1-v(t_0)}+O(\tau_1^2)$.

We give the implicit variable-step BDF2 scheme for the MBE problem \eqref{cont: MBE model} as
\begin{align}\label{eq: fully BDF2 implicit scheme}
	D_2u_h^n+\delta\Delta_h^2u_h^n-\nabla_h\cdot f(\nabla_h u_h^n)=0\quad\text{for $\mathrm{x}_h\in\Omega_h$, $1\le n\le N$}
\end{align}
with the initial data $u_h^0=\varphi_0(\mathrm{x}_h)-\frac{\tau_1}{2}\varphi_1(\mathrm{x}_h)\ \text{for $\mathrm{x}_h\in\bar{\Omega}_h$},$
where $\varphi_1:=\nabla\cdot f(\nabla \varphi_0)-\delta\Delta^2\varphi_0$ for the smooth data $\varphi_0\in H^4(\Omega)$.
The spatial operators are approximated by the finite difference method. 
We start our analysis by viewing the above BDF2 formula as a discrete convolution
summation $D_2v^n:=\sum_{k=1}^nb_{n-k}^{(n)}\diff v^k\ \text{for $n\ge1$},$
where the discrete convolution kernels $b_{n-k}^{(n)}$ are
defined by $b^{(1)}_0:=2/\tau_1$, and when $n\ge 2$,
\begin{align}\label{eq: BDF2-kernels}
	b_{0}^{(n)}:=\frac{1+2r_n}{\tau_n(1+r_n)},\quad
	b_{1}^{(n)}:=-\frac{r_n^2}{\tau_n(1+r_n)}\quad \text{and} \quad
	b_{j}^{(n)}:=0\quad \mathrm{for}\quad 2\le j\le n-1.
\end{align}
The variable-step BDF2 time-stepping was considered recently in \cite{LiaoZhang:2019,2022-Ji+JSC} from
a new point of view by making the virtue of the positive definiteness of BDF2 convolution kernels $b_{n-k}^{(n)}$.
A concise $L^2$ norm stability and convergence theory of variable-step BDF2 scheme has been established for the linear diffusion equations
provided that the adjacent time-step ratios $r_k\le r_s< 4.864\ \text{for}\ 2\le k\le N$.
The discrete tool as a counterpart is the so-called DOC kernels,
given by
\begin{align}\label{eq: DOC-Kernels}
	\theta_{0}^{(n)}:=\frac{1}{b_{0}^{(n)}}
	\quad \mathrm{and} \quad
	\theta_{n-k}^{(n)}:=-\frac{1}{b_{0}^{(k)}}
	\sum_{j=k+1}^n\theta_{n-j}^{(n)}b_{j-k}^{(j)}
	\quad \text{for $1\le k\le n-1$},
\end{align}
deduced by the following  discrete orthogonal identity
\begin{align}\label{eq: orthogonal identity}
	\sum_{j=k}^n\theta_{n-j}^{(n)}b_{j-k}^{(j)}\equiv \delta_{nk}\quad\text{for $1\le k\le n$,}
\end{align}
where $\delta_{nk}$ is the Kronecker delta symbol. By exchanging the summation order
and using the identity \eqref{eq: orthogonal identity}, it is not difficult to check that
\begin{align}\label{eq: orthogonal equality for BDF2}
	\sum_{j=1}^n\theta_{n-j}^{(n)}D_2v^j=\diff v^n\quad\text{for $\{v^j\,|\,0\le j\le n\}$.}
\end{align}
This equality \eqref{eq: orthogonal equality for BDF2} will play an important role in the subsequent analysis.
The detailed properties of the DOC kernels $\theta_{n-k}^{(n)}$ are referred to Lemma \ref{lem: DOC property}.
Lemma \ref{lem:Conv-Kernels-Positive} shows that the BDF2 convolution kernels $b_{n-k}^{(n)}$ are positive definite
provided the adjacent time-step ratios $r_k$ satisfy a sufficient condition $r_k\le r_s$ for $2\le k \le N$.

\subsection{Unique solvability}
To prove the unique solvability, we need the following lemma.

\begin{lemma}\label{lem:nonlinear term}
For any vectors $\bs u$, $\bs v$ and $\bs z:=\bs u-\bs v$, it holds that
\begin{align*}
&(\bs u-\bs v)^T f(\bs u)\geq \frac{1}{4}(|\bs u|^4-|\bs v|^4)-\frac{1}{2}(|\bs u|^2|-|\bs v|^2)-\frac{1}{2}|\bs u- \bs v|^2,\\
&\bs z^T\kbra{f\brat{\bs u}-f\brat{\bs v}}
\ge\,\frac12|\bs v|^2|\bs z|^2+\frac12\brat{\bs u^T\bs z}^2-|\bs z|^2.\quad\quad\quad\quad\quad
 \end{align*}
\end{lemma}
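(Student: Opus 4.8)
The proof is pure elementary vector algebra; none of the kernel properties are needed here. The plan is to reduce both estimates to three ingredients: the polarization identity $\bs v^T\bs u=\frac12\brat{|\bs u|^2+|\bs v|^2-|\bs u-\bs v|^2}$, the scalar inequality $(a-b)a\ge\frac12(a^2-b^2)$ (which is just a rewriting of $\frac12(a-b)^2\ge0$), and the Cauchy--Schwarz inequality $(\bs u^T\bs z)^2\le|\bs u|^2|\bs z|^2$.

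\textbf{First inequality.} Since $f(\bs u)=(|\bs u|^2-1)\bs u$, I would start from $(\bs u-\bs v)^Tf(\bs u)=(|\bs u|^2-1)(\bs u-\bs v)^T\bs u$ and use the polarization identity to rewrite $(\bs u-\bs v)^T\bs u=\frac12(|\bs u|^2-|\bs v|^2)+\frac12|\bs u-\bs v|^2$, obtaining $(\bs u-\bs v)^Tf(\bs u)=\frac12(|\bs u|^2-1)(|\bs u|^2-|\bs v|^2)+\frac12(|\bs u|^2-1)|\bs u-\bs v|^2$. To the first term I would apply the scalar inequality with $a=|\bs u|^2$, $b=|\bs v|^2$, which gives $(|\bs u|^2-1)(|\bs u|^2-|\bs v|^2)\ge\frac12(|\bs u|^4-|\bs v|^4)-(|\bs u|^2-|\bs v|^2)$; to the second term I would simply drop the nonnegative factor $|\bs u|^2$, i.e. $(|\bs u|^2-1)|\bs u-\bs v|^2\ge-|\bs u-\bs v|^2$. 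Adding the two halves produces exactly the stated lower bound (the stray bar in $(|\bs u|^2|-|\bs v|^2)$ in the statement is clearly a typo for $(|\bs u|^2-|\bs v|^2)$).

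\textbf{Second inequality.} With $\bs z=\bs u-\bs v$, expand $f(\bs u)-f(\bs v)=|\bs u|^2\bs u-|\bs v|^2\bs v-\bs z$, so that $\bs z^T\kbrab{f(\bs u)-f(\bs v)}=\bs z^T\brab{|\bs u|^2\bs u-|\bs v|^2\bs v}-|\bs z|^2$. The key algebraic identity, verified by direct expansion, is $|\bs u|^2\bs u-|\bs v|^2\bs v=\frac12(|\bs u|^2+|\bs v|^2)\bs z+\frac12(|\bs u|^2-|\bs v|^2)(\bs u+\bs v)$. Dotting with $\bs z$ and using $\bs z^T(\bs u+\bs v)=|\bs u|^2-|\bs v|^2$ yields $\bs z^T\brab{|\bs u|^2\bs u-|\bs v|^2\bs v}=\frac12(|\bs u|^2+|\bs v|^2)|\bs z|^2+\frac12(|\bs u|^2-|\bs v|^2)^2$. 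I would then discard the nonnegative square, split $\frac12(|\bs u|^2+|\bs v|^2)|\bs z|^2=\frac12|\bs v|^2|\bs z|^2+\frac12|\bs u|^2|\bs z|^2$, and bound $\frac12|\bs u|^2|\bs z|^2\ge\frac12(\bs u^T\bs z)^2$ by Cauchy--Schwarz, which gives the claimed inequality after subtracting $|\bs z|^2$.

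\textbf{Main obstacle.} There is no analytic difficulty; the only genuine steps are spotting the two algebraic identities (the polarization identity in the first part, the decomposition of $|\bs u|^2\bs u-|\bs v|^2\bs v$ in the second) and, for the first inequality, deciding \emph{where} to invoke $\frac12(a-b)^2\ge0$ so that the factor $\frac14$ and the signs match the target. I would sanity-check the constants against the one-dimensional scalar case before writing the final version.
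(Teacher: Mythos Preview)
Your proof is correct. For the first inequality your argument coincides with the paper's line by line: both use the polarization identity $\bs u^T(\bs u-\bs v)=\tfrac12(|\bs u|^2-|\bs v|^2+|\bs u-\bs v|^2)$, drop the nonnegative term $|\bs u|^2|\bs u-\bs v|^2$, and apply $a(a-b)\ge\tfrac12(a^2-b^2)$ with $a=|\bs u|^2$, $b=|\bs v|^2$. For the second inequality the paper takes a slightly different algebraic route: it writes $|\bs u|^2\bs u-|\bs v|^2\bs v=(|\bs u|^2-|\bs v|^2)\bs u+|\bs v|^2\bs z$, expands $|\bs u|^2-|\bs v|^2=\bs u^T\bs z+\bs v^T\bs z$, and then handles the cross term $(\bs v^T\bs z)(\bs u^T\bs z)$ by Young's inequality followed by Cauchy--Schwarz on $(\bs v^T\bs z)^2$. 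Your symmetric decomposition $|\bs u|^2\bs u-|\bs v|^2\bs v=\tfrac12(|\bs u|^2+|\bs v|^2)\bs z+\tfrac12(|\bs u|^2-|\bs v|^2)(\bs u+\bs v)$ is a little cleaner, since it produces the exact identity $\tfrac12(|\bs u|^2+|\bs v|^2)|\bs z|^2+\tfrac12(|\bs u|^2-|\bs v|^2)^2$ with a perfect square to discard, so you need only one Cauchy--Schwarz and no Young's inequality. Either way the argument is elementary and the two variants are equivalent in strength.
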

\begin{proof} We observe the fact that $u^T(\bs u-\bs v)=\frac{1}{2}(|\bs u|^2-|\bs v|^2+|\bs u-\bs v|^2),$ it follows by omitting the nonnegative term $|\bs u|^2|\bs u-\bs v|^2$ and the mean value inequality
\begin{align*}
(\bs u-\bs v)^T f(\bs u)=&(|\bs u|^2-1)\bs u^T(\bs u-\bs v)\\
=&\frac{1}{2}(|\bs u|^2-1)(|\bs u|^2-|\bs v|^2+|\bs u-\bs v|^2)\\
\geq&\frac{1}{4}(|\bs u|^4-|\bs v|^4)-\frac{1}{2}(|\bs u|^2-|\bs v|^2)-\frac{1}{2}|\bs {u-v}|^2.
\end{align*}
It follows from Young's inequality that
\begin{align*}
\bs z^T[ f(\bs u)- f(\bs v)]=&\,\big[|\bs u|^2\bs u-|\bs v|^2\bs v-\bs z\big]^T\bs z\\
=&\,\big[(|\bs u|^2-|\bs v|^2)\bs u+|\bs v|^2\bs z\big]^T\bs z-|\bs z|^2\\
=&\,\big[(\bs u^T\bs z+\bs v^T\bs z)\bs u\big]^T\bs z+|\bs v|^2|\bs z|^2-|\bs z|^2\\
=&\,|\bs v|^2|\bs z|^2+(\bs u^T\bs z+\mathbf{v}^T\bs z)\bs u^T\bs z-|\bs z|^2\\
\ge&\,\frac12|\bs v|^2|\bs z|^2+\frac12\brat{\bs u^T\bs z}^2-|\bs z|^2.
\end{align*}
This completes the proof.
\end{proof}


\begin{theorem} Suppose the time-step ratios satisfy $r_k\le r_s$ for $2\le k\le N$ and the time-step size $\tau_n<\frac{4\delta(1+2r_n)}{1+r_n}$ for $1\le n\le N$. The difference scheme (\ref{eq: fully BDF2 implicit scheme}) is uniquely solvable.
\end{theorem}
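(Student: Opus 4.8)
The plan is to argue by induction on the time level $n$. Suppose $u_h^0,\dots,u_h^{n-1}$ have already been determined; since $b_j^{(n)}=0$ for $2\le j\le n-1$, the BDF2 difference operator reduces to $D_2u_h^n=b_0^{(n)}\diff u_h^n+b_1^{(n)}\diff u_h^{n-1}$ (with $D_2u_h^1=b_0^{(1)}\diff u_h^1$, $b_0^{(1)}=2/\tau_1$, at the first level), so the $n$-th equation of \eqref{eq: fully BDF2 implicit scheme} becomes $b_0^{(n)}w+\delta\Delta_h^2w-\nabla_h\cdot f(\nabla_hw)=g^n$ for the single unknown $w=u_h^n\in\mathbb V_h$, where $g^n$ collects the already known history terms. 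I would reformulate this as the problem of finding a zero of the continuous map $F_n(w):=b_0^{(n)}w+\delta\Delta_h^2w-\nabla_h\cdot f(\nabla_hw)-g^n$ on the finite-dimensional space $\mathbb V_h$ (continuity being clear since $f$ is a polynomial).

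For the existence part I would invoke the usual corollary of the Brouwer fixed-point theorem: if $F_n$ is continuous on a closed ball and $\myinner{F_n(w),w}\ge 0$ on the bounding sphere, then $F_n$ vanishes somewhere inside. Pairing $F_n(w)$ with $w$ and using the discrete Green's formulas $\myinner{\Delta_h^2w,w}=\mynorm{\Delta_hw}^2$ and $-\myinner{\nabla_h\cdot f(\nabla_hw),w}=\myinner{f(\nabla_hw),\nabla_hw}$, together with the elementary pointwise bound $f(\bs a)^T\bs a=\abs{\bs a}^4-\abs{\bs a}^2\ge -\tfrac14$, gives
$$\myinner{F_n(w),w}\ge b_0^{(n)}\mynorm{w}^2+\delta\mynorm{\Delta_hw}^2-\tfrac14\abs{\Omega}-\mynorm{g^n}\mynorm{w}.$$
Since $b_0^{(n)}>0$, the right-hand side is positive once $\mynorm{w}$ exceeds a finite radius depending only on $\mynorm{g^n}$, $b_0^{(n)}$ and $\abs{\Omega}$, which produces a solution $u_h^n$.

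For uniqueness I would take two solutions $u_h^n$ and $\tilde u_h^n$, set $z:=u_h^n-\tilde u_h^n$, subtract the two equations and test with $z$; the Green's formulas then yield
$$b_0^{(n)}\mynorm{z}^2+\delta\mynorm{\Delta_hz}^2+\myinner{f(\nabla_hu_h^n)-f(\nabla_h\tilde u_h^n),\nabla_hz}=0.$$
The decisive step is to apply the second inequality of Lemma~\ref{lem:nonlinear term} pointwise with $\bs u=\nabla_hu_h^n$, $\bs v=\nabla_h\tilde u_h^n$ and $\bs z=\nabla_hz$, and to discard the nonnegative contributions $\tfrac12\abs{\bs v}^2\abs{\bs z}^2$ and $\tfrac12(\bs u^T\bs z)^2$; this bounds the nonlinear term from below by $-\mynorm{\nabla_hz}^2$, so that $b_0^{(n)}\mynorm{z}^2+\delta\mynorm{\Delta_hz}^2\le\mynorm{\nabla_hz}^2$. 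Then the discrete Sobolev embedding \eqref{embedding inequality}, $\mynorm{\nabla_hz}^2\le\mynorm{\Delta_hz}\mynorm{z}$, and Young's inequality $\mynorm{\Delta_hz}\mynorm{z}\le\delta\mynorm{\Delta_hz}^2+\tfrac1{4\delta}\mynorm{z}^2$ absorb the biharmonic term and leave $\bra{b_0^{(n)}-\tfrac1{4\delta}}\mynorm{z}^2\le 0$. Observing that the hypothesis $\tau_n<\frac{4\delta(1+2r_n)}{1+r_n}$ is exactly $b_0^{(n)}>\tfrac1{4\delta}$ (reading $b_0^{(1)}=2/\tau_1$ at $n=1$), we conclude $z=0$, and the induction closes.

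The step I expect to be the main obstacle is the uniqueness estimate for the second-order nonlinearity: one has to extract precisely the right monotonicity-type lower bound from $f$ through Lemma~\ref{lem:nonlinear term} and then balance the unwanted term $-\mynorm{\nabla_hz}^2$ against the dissipative term $\delta\mynorm{\Delta_hz}^2$ via the embedding inequality — it is exactly this balancing that forces the step-size restriction on $\tau_n$. In contrast, existence is unconditional in the step sizes, and the time-step ratio assumption $r_k\le r_s$ is needed here only to the extent that it keeps $b_0^{(n)}>0$ (which already holds for any $r_n>0$), being inherited from the standing hypotheses of the paper.
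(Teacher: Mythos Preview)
Your proof is correct and follows the same overall plan as the paper: Brouwer's theorem for existence of a zero of $F_n$, then Lemma~\ref{lem:nonlinear term} together with the embedding inequality \eqref{embedding inequality} and Young's inequality for uniqueness. The uniqueness argument is essentially identical to the paper's.

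There is one genuine difference in the existence step. The paper expands $\myinner{f(\nabla_hw),\nabla_hw}=\mynorm{\nabla_hw}_{l^4}^4-\mynorm{\nabla_hw}^2$, drops the nonnegative quartic term, and then controls $-\mynorm{\nabla_hw}^2$ via \eqref{embedding inequality} and Young's inequality; this consumes the $\delta\mynorm{\Delta_hw}^2$ term and produces the coercivity constant $b_0^{(n)}-\tfrac{1}{4\delta}$, so the paper already invokes the hypothesis $\tau_n<\tfrac{4\delta(1+2r_n)}{1+r_n}$ at the existence stage. Your pointwise bound $f(\bs a)^T\bs a=\abs{\bs a}^4-\abs{\bs a}^2\ge-\tfrac14$ is sharper here: it avoids the embedding inequality entirely and yields $\myinner{F_n(w),w}\ge b_0^{(n)}\mynorm{w}^2-\tfrac14\abs{\Omega}-\mynorm{g^n}\mynorm{w}$, so existence indeed follows for any $\tau_n>0$, as you note. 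Thus your argument separates the roles of the hypotheses more cleanly: the step-size restriction is needed only for uniqueness, while the ratio condition $r_k\le r_s$ is not used at all in this theorem (both approaches agree on that).
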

\begin{proof} The Brouwer fixed-point theorem is applied to show the solvability of the difference scheme (\ref{eq: fully BDF2 implicit scheme}).
For any fixed index $n\ge1$, we construct the  map $\Pi_n:\mathbb{V}_{h}\rightarrow \mathbb{V}_{h}$ as follows
\begin{align}\label{eq: nonlinear map}
\Pi_n(w_h):=b_0^{(n)}w_h-g_h^{n-1}+\delta\Delta_h^2w_h-\nabla_h\cdot f(\nabla_h w_h),~~\mathrm{x}_h\in\bar{\Omega}_h.
\end{align}
where $g_h^{n-1}=b_0^{(n)}u_h^{n-1}-b_1^{(n)}\diff  u_h^{n-1},$ for $n\ge2$ and $g_h^{0}=b_0^{(1)}u_h^{0}$.

Suppose $u^{n-1},~u^{n-2}$ have been determined, taking the inner product of $\Pi_n(w)$ with $w,$ it yields
\begin{align*}
\myinner{\Pi_n(w), w}=b_0^{(n)}\myinner{w, w}+\delta\myinner{\Delta_h^2w, w}
-\myinner{\nabla_h\cdot f(\nabla_h w), w}-\myinner{g^{n-1}, w}.
\end{align*}
Combining the embedding inequality (\ref{embedding inequality}) and Young's inequality, it follows that
\begin{align*}
\myinner{\Pi_n(w), w}\geq&\,b_0^{(n)}\|w\|^2+\delta\|\Delta_hw\|^2+\|\nabla_hw\|^4_{l^4}-\|\nabla_hw\|^2-\|g^{n-1}\|\cdot\|w\|
\\
\geq&\,b_0^{(n)}\|w\|^2+\delta\|\Delta_hw\|^2-\big(\delta\|\Delta_hw\|^2+\frac{1}{4\delta}\|w\|^2\big)-\|g^{n-1}\|\cdot\|w\|\\
=&\,\big(b_0^{(n)}-\frac{1}{4\delta}\big)\|w\|^2-\|g^{n-1}\|\cdot\|w\|.
\end{align*}
When $\tau_n<\frac{4\delta(1+2r_n)}{1+r_n}$ and $\|w\|=4\delta\|g^{n-1}\|/(4\delta b_0^{(n)}-1),$ we arrive at
$$\myinner{\Pi_n (w), w}\geq0.$$
With the help of the Brouwer fixed-point theorem, there exists a $w_h^*$ such that
$\Pi_n(w_h^*)=0$
which implies that the variable-step BDF2 scheme (\ref{eq: fully BDF2 implicit scheme}) is solvable.

Next, we show the uniqueness of the solutions.
Suppose both $w_h$ and $v_h$ are the solutions of the difference scheme (\ref{eq: fully BDF2 implicit scheme}). Denote the difference $\rho_h=w_h-v_h.$
 Then it follows that
\begin{align}
b_0^{(n)}\rho_h+\delta\Delta_h^2\rho_h-\nabla_h\cdot\Big(f(\nabla_h w_h)- f(\nabla_h v_h)\Big)=0.\label{eq: Error nonlinear map}
\end{align}
Taking the inner product of (\ref{eq: Error nonlinear map}) with $\rho$, we have
\begin{align*}
b_0^{(n)}\|\rho\|^2+\delta\|\Delta_h\rho\|^2+\myinner{f(\nabla_hw)-f(\nabla_hv), \nabla_h\rho}=0.
\end{align*}
Making use of Lemma \ref{lem:nonlinear term} and taking $\h u=\nabla_hu,~~\h v=\nabla_hv$ and $\h z=\nabla_h\rho$, it yields
\begin{align*}
b_0^{(n)}\|\rho\|^2+\delta\|\Delta_h\rho\|^2+
\frac{1}{2}h^2\sum_{\mathbf{x}_h\in\Omega_h}\big(|\nabla_h v_h|^2|\nabla_h\rho_h|^2+(\nabla_h u_h\cdot\nabla_h\rho_h)^2\big)\le&\|\nabla_h \rho\|^2.
\end{align*}
Noticing that the third term on the left hand side is nonnegative, with the help of the embedding inequality \eqref{embedding inequality}, we have
\begin{align*}
b_0^{(n)}\|\rho\|^2+\delta\|\Delta_h\rho\|^2
\le\delta\|\Delta_h\rho\|^2+\frac{1}{4\delta}\|\rho\|^2.
\end{align*}
When $\tau_n<\frac{4\delta(1+2r_n)}{1+r_n},$ the above inequality implies that $\|\rho\|=0.$
This completes the proof.\end{proof}

\subsection{Energy dissipation law}
Now we present the energy stability of the scheme (\ref{eq: fully BDF2 implicit scheme}).
The following lemma shows the convolution kernels $b_{n-k}^{(n)}$ are positive definite if the time-step ratios satisfy $r_k\le r_s$ for $2\le k\le N$.

\begin{lemma}{\rm\cite{2022-Ji+JSC}}\label{lem:Conv-Kernels-Positive}
Let the time-step ratios satisfy $r_k\le r_s$ for $2\le k\le N$, for any real sequence $\{w^k\}_{k=1}^n$ with n entries, it holds that
\begin{align*}
2w_k\sum_{j=1}^kb_{k-j}^{(k)}w_j
&\ge\frac{r_{k+1}^{\frac{3}{2}}}{1+r_{k+1}}\frac{w_k^2}{\tau_k}
-\frac{r_k^{\frac{3}{2}}}{1+r_k}\frac{w_{k-1}^2}{\tau_{k-1}}
+R_L(r_k,r_{k+1})\frac{w_k^2}{\tau_k} ~~\text{for}~~ k\ge2,
\end{align*}
where $R_L(z,s)=\frac{2+4z-z^{\frac{3}{2}}}{1+z}-\frac{s^{\frac{3}{2}}}{1+s},~0<z,s<r_s.$
Thus the discrete convolution kernels $b_{n-k}^{(n)}$ are positive definite
\[
\sum_{k=1}^n w_k \sum_{j=1}^k b_{k-j}^{(k)}w_j\ge \frac12\sum_{k=1}^nR_L(r_k,r_{k+1})\frac{w_k^2}{\tau_k}\quad\text{for $n\ge 1$}.
\]
\end{lemma}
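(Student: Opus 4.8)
The plan is to reduce the convolution sum at each level $k$ to a two-term quadratic form in $(w_k,w_{k-1})$, to control its single indefinite cross term by a \emph{carefully weighted} Young inequality, and then to add the levels so that the surviving boundary terms telescope. The starting point is that the kernels in \eqref{eq: BDF2-kernels} are supported on at most two indices, so that for $k\ge2$
\[
2w_k\sum_{j=1}^kb_{k-j}^{(k)}w_j
=2b_0^{(k)}w_k^2+2b_1^{(k)}w_kw_{k-1}
=\frac{2(1+2r_k)}{\tau_k(1+r_k)}w_k^2-\frac{2r_k^2}{\tau_k(1+r_k)}w_kw_{k-1}.
\]
Since $b_1^{(k)}<0$, I would estimate the cross term from below by Young's inequality with weight $r_k^{1/2}$, i.e. $2b_1^{(k)}w_kw_{k-1}\ge b_1^{(k)}\bigl(r_k^{-1/2}w_k^2+r_k^{1/2}w_{k-1}^2\bigr)$. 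The weight $r_k^{1/2}$ is forced, not chosen for convenience: it is the unique choice that makes the resulting $w_{k-1}^2$-coefficient equal to $-\frac{r_k^{5/2}}{\tau_k(1+r_k)}=-\frac{r_k^{3/2}}{\tau_{k-1}(1+r_k)}$ (using $\tau_{k-1}=\tau_k/r_k$), which is precisely the quantity that must cancel against the term produced at level $k-1$. Collecting the $w_k^2$-coefficients leaves $\frac1{\tau_k}\cdot\frac{2+4r_k-r_k^{3/2}}{1+r_k}$, and writing this as $\frac{r_{k+1}^{3/2}}{1+r_{k+1}}+R_L(r_k,r_{k+1})$ — which is just the defining identity of $R_L$ — gives exactly the pointwise inequality claimed for $k\ge2$.

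For the global estimate I would put $G_k:=\frac{r_{k+1}^{3/2}}{1+r_{k+1}}\frac{w_k^2}{\tau_k}$ and sum the pointwise inequality over $k=2,\dots,n$: its first two right-hand terms are $G_k-G_{k-1}$, so the sum telescopes to $G_n-G_1+\sum_{k=2}^nR_L(r_k,r_{k+1})\frac{w_k^2}{\tau_k}$. It then remains to incorporate the starting level, where the modified first-step formula gives $b_0^{(1)}=2/\tau_1$ and no $b_1^{(1)}$-term, so $2w_1\sum_{j=1}^1b_{1-j}^{(1)}w_j=\frac{4}{\tau_1}w_1^2$. Since $G_n\ge0$ and, trivially, $4(1+r_1)\ge2+4r_1\ge2+4r_1-r_1^{3/2}$, we get $\frac{4}{\tau_1}w_1^2-G_1\ge R_L(r_1,r_2)\frac{w_1^2}{\tau_1}$, so the level-$1$ contribution absorbs $G_1$ and supplies the missing $k=1$ summand. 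Adding the levels together and dividing by $2$ yields $\sum_{k=1}^nw_k\sum_{j=1}^kb_{k-j}^{(k)}w_j\ge\frac12\sum_{k=1}^nR_L(r_k,r_{k+1})\frac{w_k^2}{\tau_k}$. The asserted positive definiteness is then immediate once one checks $R_L(r_k,r_{k+1})\ge0$, and this is where $r_k\le r_s$ is used: since $s\mapsto s^{3/2}/(1+s)$ is increasing, $R_L(r_k,r_{k+1})\ge R_L(r_k,r_s)$, and a short one-variable argument shows $R_L(z,r_s)\ge0$ for $0<z\le r_s$, with $r_s$ the positive root of $r^{3/2}=1+2r$ (so $r_s\approx4.864$).

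The only genuinely delicate step is the first one: the telescoping in the second paragraph works only because the $w_{k-1}^2$-coefficient at level $k$ matches the $w_k^2$-coefficient at level $k-1$, and this matching is exactly what dictates the Young weight $r_k^{1/2}$ (any other weight leaves a non-telescoping remainder). Everything after that is bookkeeping, except for one small wrinkle — the nonuniform starting formula at $k=1$ — which one disposes of by noting that the constant $4$ appearing there is large enough to dominate $G_1$ regardless of the first ratio, i.e. by the elementary inequality $2+r_1^{3/2}\ge0$.
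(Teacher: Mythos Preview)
Your argument is correct. The paper does not supply its own proof of this lemma---it is quoted verbatim from the cited reference \cite{2022-Ji+JSC}---so there is nothing in the present paper to compare against; your weighted Young inequality with weight $r_k^{1/2}$ followed by telescoping is precisely the standard derivation underlying that reference, and your handling of the modified $k=1$ level via $2+r_1^{3/2}\ge0$ is the right bookkeeping.
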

We define the discrete energy $E^n=\frac{\delta}{2}\mynormb{\Delta_hu^n}^2+\frac{1}{4}\mynormb{|\nabla_hu^n|^2-1}^2,\ \text{for}~n\ge0.$
Furthermore, the modified discrete energy is defined by $\mathcal{E}^n=E^n+\frac{r_{n+1}^{\frac32}}{2(1+r_{n+1})\tau_n}\mynormb{\diff  u^n}^2,$ for $n\ge 1$, with $\mathcal{E}^0=E^0.$
\begin{theorem}\label{Energy dissipation law}
If the time-step ratios satisfy $r_k\le r_s$ for $2\le k\le N$ and the time-step size
\begin{align}
\tau_n< 4\delta \min\Big\{R_L(r_n,r_{n+1}),\frac{2+r_2}{1+r_2}\Big\},\label{Restriction-Time-Step}
\end{align}
then the solution of the variable-step BDF2 scheme \eqref{eq: fully BDF2 implicit scheme} satisfies
$$E^n\le \mathcal{E}^n\le \mathcal{E}^{n-1}\le E^0,~~1\le n\le N.$$
\end{theorem}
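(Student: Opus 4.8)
The plan is to run the classical energy argument for BDF2 phase-field schemes: test \eqref{eq: fully BDF2 implicit scheme} against the forward difference $\diff u^n$, reorganise the diffusion and nonlinear terms into a telescoping energy increment plus a remainder of order $\mynorm{\diff u^n}^2$, and then use the positive definiteness of the BDF2 convolution kernels (Lemma~\ref{lem:Conv-Kernels-Positive}) to hide that remainder inside the modified energy $\mathcal{E}^n$. Taking $\myinner{\cdot,\diff u^n}$ of \eqref{eq: fully BDF2 implicit scheme} and using the discrete Green's formula $-\myinner{\nabla_h\cdot f(\nabla_h u^n),\diff u^n}=\myinner{f(\nabla_h u^n),\nabla_h\diff u^n}$ gives $\myinner{D_2 u^n,\diff u^n}+\delta\myinner{\Delta_h^2 u^n,\diff u^n}+\myinner{f(\nabla_h u^n),\nabla_h\diff u^n}=0$.

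First I would treat the three terms separately. For the biharmonic term the identity $2\myinner{a,a-b}=\mynorm{a}^2-\mynorm{b}^2+\mynorm{a-b}^2$ gives $\delta\myinner{\Delta_h^2 u^n,\diff u^n}=\tfrac{\delta}{2}\brab{\mynorm{\Delta_h u^n}^2-\mynorm{\Delta_h u^{n-1}}^2+\mynorm{\Delta_h\diff u^n}^2}$. For the nonlinear term I would apply the first inequality of Lemma~\ref{lem:nonlinear term} at each grid point with $\bs u=\nabla_h u^n$, $\bs v=\nabla_h u^{n-1}$, multiply by $h^2$ and sum over $\Omega_h$; using $(|\nabla_h u^{n}|^2-1)^2=|\nabla_h u^{n}|^4-2|\nabla_h u^{n}|^2+1$ one checks that the constants $\tfrac14 L^2$ cancel in the difference, leaving $\myinner{f(\nabla_h u^n),\nabla_h\diff u^n}\ge\tfrac14\mynormb{|\nabla_h u^n|^2-1}^2-\tfrac14\mynormb{|\nabla_h u^{n-1}|^2-1}^2-\tfrac12\mynorm{\nabla_h\diff u^n}^2$. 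Combining these with the definition of $E^n$ yields
\begin{align*}
\myinner{D_2 u^n,\diff u^n}+E^n-E^{n-1}+\tfrac{\delta}{2}\mynorm{\Delta_h\diff u^n}^2\le\tfrac12\mynorm{\nabla_h\diff u^n}^2 .
\end{align*}
Then the embedding inequality \eqref{embedding inequality} together with Young's inequality $\tfrac12xy\le\tfrac{\delta}{2}x^2+\tfrac1{8\delta}y^2$ bounds the right-hand side by $\tfrac{\delta}{2}\mynorm{\Delta_h\diff u^n}^2+\tfrac1{8\delta}\mynorm{\diff u^n}^2$, so that $E^n-E^{n-1}+\myinner{D_2 u^n,\diff u^n}\le\tfrac1{8\delta}\mynorm{\diff u^n}^2$.

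Next I would double this inequality and, for $n\ge2$, invoke Lemma~\ref{lem:Conv-Kernels-Positive} with $w_k=\diff u^k$ (applied pointwise, then multiplied by $h^2$ and summed) to bound $2\myinner{D_2 u^n,\diff u^n}$ from below by $\tfrac{r_{n+1}^{3/2}}{(1+r_{n+1})\tau_n}\mynorm{\diff u^n}^2-\tfrac{r_n^{3/2}}{(1+r_n)\tau_{n-1}}\mynorm{\diff u^{n-1}}^2+R_L(r_n,r_{n+1})\tfrac{\mynorm{\diff u^n}^2}{\tau_n}$. Substituting and regrouping the telescoping pieces into $\mathcal{E}^n$ gives
\begin{align*}
2\mathcal{E}^n\le2\mathcal{E}^{n-1}+\braB{\tfrac{\tau_n}{4\delta}-R_L(r_n,r_{n+1})}\tfrac{\mynorm{\diff u^n}^2}{\tau_n} ,
\end{align*}
whence the restriction $\tau_n<4\delta R_L(r_n,r_{n+1})$ in \eqref{Restriction-Time-Step} yields $\mathcal{E}^n\le\mathcal{E}^{n-1}$ for $n\ge2$. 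The case $n=1$ must be handled by hand, since $r_1$ is undefined and $D_2 u^1=\tfrac{2}{\tau_1}\diff u^1$: then $2\myinner{D_2 u^1,\diff u^1}=\tfrac{4}{\tau_1}\mynorm{\diff u^1}^2$, and $\mathcal{E}^1\le\mathcal{E}^0=E^0$ reduces to $\tau_1\le4\delta\brab{4-\tfrac{r_2^{3/2}}{1+r_2}}$, which is guaranteed by $\tau_1<4\delta\tfrac{2+r_2}{1+r_2}$ once the elementary estimate $r_2^{3/2}\le2+3r_2$ is checked on $(0,r_s)$ — this is exactly why the second entry appears in the minimum in \eqref{Restriction-Time-Step}. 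Finally $E^n\le\mathcal{E}^n$ is immediate because the added term in $\mathcal{E}^n$ is nonnegative, and $\mathcal{E}^0=E^0$ by definition, so chaining $E^n\le\mathcal{E}^n\le\mathcal{E}^{n-1}\le\cdots\le\mathcal{E}^0=E^0$ closes the argument.

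I expect the delicate points to be the algebra of the nonlinear term — that Lemma~\ref{lem:nonlinear term} is calibrated so the $\tfrac14 L^2$ constants cancel and one recovers the genuine discrete free-energy difference $\tfrac14\mynormb{|\nabla_h u^n|^2-1}^2-\tfrac14\mynormb{|\nabla_h u^{n-1}|^2-1}^2$ — together with keeping the $\tfrac{\delta}{2}\mynorm{\Delta_h\diff u^n}^2$ available to absorb $\tfrac12\mynorm{\nabla_h\diff u^n}^2$, and the matching of the Young remainder $\tfrac1{4\delta}\mynorm{\diff u^n}^2$ (after doubling) against the slack $R_L(r_n,r_{n+1})\tfrac{\mynorm{\diff u^n}^2}{\tau_n}$ produced by Lemma~\ref{lem:Conv-Kernels-Positive}, which is what pins down the step-size constraint \eqref{Restriction-Time-Step}. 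The remaining telescoping is routine.
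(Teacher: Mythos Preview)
Your proposal is correct and follows essentially the same route as the paper's proof: test \eqref{eq: fully BDF2 implicit scheme} against $\diff u^n$, use the first inequality of Lemma~\ref{lem:nonlinear term} for the nonlinear term, the identity $2a(a-b)=a^2-b^2+(a-b)^2$ for the biharmonic term, the embedding \eqref{embedding inequality} with Young's inequality to absorb $\tfrac12\mynorm{\nabla_h\diff u^n}^2$, and Lemma~\ref{lem:Conv-Kernels-Positive} to produce the telescoping modified-energy terms and the slack $R_L(r_n,r_{n+1})\tau_n^{-1}\mynorm{\diff u^n}^2$. Your treatment of the $n=1$ step via the elementary bound $r_2^{3/2}\le2+3r_2$ is in fact a bit more explicit than the paper's own (somewhat terse) handling of that case.
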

\begin{proof} We start our proof by taking the inner product of (\ref{eq: fully BDF2 implicit scheme}) with $\diff  u^n,$ it yields
\begin{align}\label{Energy-Law-Inner}
\myinner{D_2u^n, \diff  u^n}+\delta\myinner{\Delta_h^2u^n, \diff  u^n}-\myinner{\nabla_h\cdot f(\nabla_h u^n), \diff  u^n}=0.
\end{align}
For $n\geq 2,$ taking $w_n=\diff u^{n}$ in the first inequality of Lemma \ref{lem:Conv-Kernels-Positive}, we obtain an estimate of the first term on the left hand in (\ref{Energy-Law-Inner}),
\begin{align*}
\myinner{D_2u^n, \diff  u^n}\geq&\frac{r_{n+1}^{\frac32}}{2(1+r_{n+1})\tau_n}\|\diff  u^n\|^2
-\frac{r_{n}^{\frac32}}{2(1+r_{n})\tau_{n-1}}\|\diff  u^{n-1}\|^2\\
&+\frac{1}{2\tau_n}R_L(r_n,r_{n+1})\|\diff  u^n\|^2.
\end{align*}
By virtue of the summation by parts and the equality $2a(a-b)=a^2-b^2+(a-b)^2,$ we have the diffusion term rewritten as
\begin{align*}
 \myinner{\Delta_h^2u^n,\diff  u^n}
 =\frac{1}{2}(\|\Delta_hu^n\|^2-\|\Delta_hu^{n-1}\|^2+\|\Delta_h(\diff  u^n)\|^2).
 \end{align*}
An application of Lemma \ref{lem:nonlinear term} with $\h u=\nabla_h u^n$ and $\h v=\nabla_h u^{n-1}$ gives an lower bound of the nonlinear part
 \begin{align*}
&-\myinner{\nabla_h\cdot f(\nabla_h u^n), \diff  u^n}=\myinner{f(\nabla_h u^n), \diff  (\nabla_h u^n)}\\
\geq& \frac{1}{4}(\|\nabla_hu^n\|_{l^4}^4-\|\nabla_hu^{n-1}\|_{l^4}^4)-\frac{1}{2}(\|\nabla_h u^n\|^2-\|\nabla_h u^{n-1}\|^2)-\frac{1}{2}\|\nabla_h (\diff  u^n)\|^2\\
\ge&\frac{1}{4}\||\nabla_hu^n|^2-1\|^2-\frac{1}{4}\||\nabla_hu^{n-1}|^2-1\|^2-\frac{1}{2}\|\nabla_h (\diff  u^n)\|^2.
\end{align*}

Substituting the above treatments into (\ref{Energy-Law-Inner}), the modified discrete energy $\mathcal{E}^n$ is given in the whole inequality
\begin{align*}
&\mathcal{E}^n+\frac{1}{2\tau_n}R_L(r_n,r_{n+1})\|\diff  u^n\|^2
+\frac{\delta}{2}\|\Delta_h(\diff  u^n)\|^2\nonumber\\
\le &\mathcal{E}^{n-1}+\frac{1}{2}\|\nabla_h (\diff  u^n)\|^2\\
\le &\mathcal{E}^{n-1}+\frac{\delta}{2}\|\Delta_h(\diff  u^n)\|^2+\frac{1}{8\delta}\|\diff  u^n\|^2,
~~2\le n\le N,
\end{align*}
in which the embedding inequality (\ref{embedding inequality}) is utilized.
Afterwards it follows from (\ref{Restriction-Time-Step}) that
$\mathcal{E}^n\le \mathcal{E}^{n-1},~2\le n\le N.$
For $n=1,$ by Young's inequality, it yields
\begin{align*}
\myinner{D_2u^1, \diff  u^1}
&\geq \frac{r_{2}}{2(1+r_2)\tau_1}\|\diff u^1\|^2+\frac{2+r_2}{2(1+r_2)\tau_1}\|\diff u^1\|^2.
\end{align*}
Then, when $\tau_1< \frac{4\delta (2+r_2)}{1+r_2},$ we have
$ \mathcal{E}^1\le \mathcal{E}^0.$
Thus, it is obvious that $E^n\le \mathcal{E}^n\le\mathcal{E}^0=E^0.$
\end{proof}

\begin{lemma}\label{bound-numerical-solution}
If the time-step ratios satisfy $r_k\le r_s$ for $2\le k\le N$ and the condition (\ref{Restriction-Time-Step}) holds, the numerical solution of the variable-step BDF2 scheme (\ref{eq: fully BDF2 implicit scheme}) satisfies
$$\max\{\|\nabla_hu^n\|, \|\nabla_hu^n\|_{l^4}, \|\Delta_hu^n\|\}\le C_0,$$
where $C_0$ is a constant, which is independent of the spatial lengths $h$ and the time steps $\tau_n.$
\end{lemma}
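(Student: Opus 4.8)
The plan is to read off all three bounds from the discrete energy inequality $E^n\le E^0$ furnished by Theorem \ref{Energy dissipation law}, which is valid precisely under the hypotheses assumed here ($r_k\le r_s$ and the step-size restriction \eqref{Restriction-Time-Step}). Since $E^n=\frac{\delta}{2}\|\Delta_hu^n\|^2+\frac14\||\nabla_hu^n|^2-1\|^2$ and every term is nonnegative, the bound on the Laplacian is immediate, $\|\Delta_hu^n\|\le\sqrt{2E^0/\delta}$, and likewise $\||\nabla_hu^n|^2-1\|\le 2\sqrt{E^0}$.

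Next I would control $\|\nabla_hu^n\|$ from the quantity $\||\nabla_hu^n|^2-1\|$ just obtained. Writing $\|\nabla_hu^n\|^2=\myinner{|\nabla_hu^n|^2-1,1}+\myinner{1,1}$ and noting $\myinner{1,1}=h^2\sum_{\mathrm{x}_h\in\Omega_h}1=L^2$, the Cauchy--Schwarz inequality gives $\|\nabla_hu^n\|^2\le L\,\||\nabla_hu^n|^2-1\|+L^2\le 2L\sqrt{E^0}+L^2$. For the $l^4$ seminorm I would use the algebraic identity $\||\nabla_hu^n|^2-1\|^2=\|\nabla_hu^n\|_{l^4}^4-2\|\nabla_hu^n\|^2+L^2$, which rearranges to $\|\nabla_hu^n\|_{l^4}^4=\||\nabla_hu^n|^2-1\|^2+2\|\nabla_hu^n\|^2-L^2\le 4E^0+2\brat{2L\sqrt{E^0}+L^2}-L^2=\brat{2\sqrt{E^0}+L}^2$; hence $\|\nabla_hu^n\|_{l^4}\le\brat{2\sqrt{E^0}+L}^{1/2}$.

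It then remains to check that $E^0$ is itself bounded independently of $h$ and of the step sizes. Since $u_h^0=\varphi_0(\mathrm{x}_h)-\frac{\tau_1}{2}\varphi_1(\mathrm{x}_h)$ with $\varphi_0\in H^4(\Omega)$ and $\varphi_1=\nabla\cdot f(\nabla\varphi_0)-\delta\Delta^2\varphi_0$, the discrete quantities $\|\Delta_hu_h^0\|$ and $\||\nabla_hu_h^0|^2-1\|$ are dominated by the corresponding continuous norms of $\varphi_0$ (uniformly for $h$ small), while $\tau_1$ stays bounded by virtue of \eqref{Restriction-Time-Step}; therefore $E^0\le C$ with $C$ depending only on $\delta$, $L$ and $\varphi_0$. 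Setting $C_0:=\max\big\{\sqrt{2E^0/\delta},\ \brat{2L\sqrt{E^0}+L^2}^{1/2},\ \brat{2\sqrt{E^0}+L}^{1/2}\big\}$ completes the proof.

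The routine parts are the energy expansion and the two applications of Cauchy--Schwarz; the only point requiring a little care is the uniform-in-$(h,\tau_n)$ estimate of $E^0$, i.e. making sure the $\tau_1$-correction in the initial datum does not spoil the bound, which is handled by the step-size restriction and the assumed $H^4$-regularity of $\varphi_0$.
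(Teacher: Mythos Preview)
Your proof is correct and reaches the same conclusion as the paper, but by a slightly different (and arguably more transparent) route. The paper expands $4E^n$ and applies the pointwise inequality $a^4\ge 4a^2-4$ to three quarters of $\|\nabla_hu^n\|_{l^4}^4$, obtaining all three norms simultaneously in the combined bound $2\delta\|\Delta_hu^n\|^2+\tfrac14\|\nabla_hu^n\|_{l^4}^4+\|\nabla_hu^n\|^2\le 4E^0+2|\Omega_h|$. You instead read off the Laplacian bound and the double-well term directly from the nonnegativity of the two energy summands, then recover $\|\nabla_hu^n\|$ via Cauchy--Schwarz on $\myinner{|\nabla_hu^n|^2-1,1}$, and finally $\|\nabla_hu^n\|_{l^4}$ by expanding $\||\nabla_hu^n|^2-1\|^2$. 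Your approach yields explicit constants for each norm separately, and you also address a point the paper leaves implicit: the initial energy $E^0$ depends on $\tau_1$ through $u_h^0=\varphi_0-\tfrac{\tau_1}{2}\varphi_1$, so its uniform boundedness in $h$ and $\tau_n$ genuinely uses the step-size restriction \eqref{Restriction-Time-Step} together with the assumed regularity of $\varphi_0$.
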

\begin{proof} Using the fact that $a^4\ge 4a^2-4$  and applying Theorem \ref{Energy dissipation law}, it follows that
\begin{align*}
4E^0&\ge4E^n=2\delta\|\Delta_hu^n\|^2+\|\nabla_hu^n\|_{l^4}^4-2\|\nabla_hu^n\|^2+|\Omega_h|\\
&\ge2\delta\|\Delta_hu^n\|^2+\frac{1}{4}\|\nabla_hu^n\|_{l^4}^4+\frac{3}{4}(4\|\nabla_hu^n\|^2-4|\Omega_h|)-2\|\nabla_hu^n\|^2+|\Omega_h|\\
&\ge2\delta\|\Delta_hu^n\|^2+\frac{1}{4}\|\nabla_hu^n\|_{l^4}^4+\|\nabla_hu^n\|^2-2|\Omega_h|.
\end{align*}
Taking $K_0=\min\{2\delta, 1/4\}$, we get
\begin{align*}
\|\Delta_hu^n\|^2+\|\nabla_hu^n\|_{l^4}^4+\|\nabla_hu^n\|^2\le(4E^0+2|\Omega_h|)/K_0=:K_1.
\end{align*}
Then, the desired estimate is obtained with $C_0=\max\{\sqrt{K_1}, \sqrt[4]{K_1}\}.$
\end{proof}

\section{Convergence analysis}
\setcounter{equation}{0}
In this section, we derive the error estimate of the implicit variable-step BDF2 scheme (\ref{eq: fully BDF2 implicit scheme}).
We begin with some fundamental properties and several discrete convolution inequalities with respect to the DOC kernels. For convenience, we firstly introduce the denotation $\sum\limits_{k,l}^{n,k}:=\sum\limits_{k=1}^n
\sum\limits_{l=1}^k.$

\begin{lemma}{\rm \cite{LiaoZhang:2019}}\label{lem: DOC property}
	If the discrete convolution kernels  $b^{(n)}_{n-k}$ defined in
	\eqref{eq: BDF2-kernels} are positive definite,
then the DOC kernels $\theta_{n-l}^{(n)}$
	defined in \eqref{eq: DOC-Kernels} satisfy:
\begin{itemize}
  \item[(I)] The discrete kernels $\theta_{n-l}^{(n)}$ are positive definite;
  \item[(II)] $\displaystyle \sum_{l=1}^{n}\theta_{n-l}^{(n)}\le\tau_n$ such that
  $\displaystyle \sum_{k,l}^{n,k}\theta_{k-l}^{(k)}\le t_n$ for $n\ge1$.
\end{itemize}
\end{lemma}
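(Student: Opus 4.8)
The plan is to argue entirely at the level of the triangular matrices attached to the two families of kernels. Fix $n$ and let $B$ be the lower–triangular matrix with entries $(B)_{ml}=b_{m-l}^{(m)}$ for $1\le l\le m\le n$ (zero above the diagonal), and let $\Theta$ be the lower–triangular matrix with entries $(\Theta)_{ml}=\theta_{m-l}^{(m)}$. Reading the orthogonal identity \eqref{eq: orthogonal identity} for each row $m\le n$ shows precisely that $\Theta B=I$, so, $B$ being invertible (its diagonal entries $b_0^{(m)}>0$), we have $\Theta=B^{-1}$.

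For item (I): the hypothesis that the convolution kernels $b_{m-l}^{(m)}$ are positive definite means $\bs w^{T}B\bs w=\tfrac12\bs w^{T}(B+B^{T})\bs w>0$ for every nonzero real $\bs w$, i.e. the symmetric part $B+B^{T}$ is positive definite. Since
\[
\Theta+\Theta^{T}=B^{-1}+B^{-T}=B^{-1}\,(B+B^{T})\,B^{-T},
\]
the matrix $\Theta+\Theta^{T}$ is congruent to $B+B^{T}$ and hence positive definite as well; therefore $\bs w^{T}\Theta\bs w=\tfrac12\bs w^{T}(\Theta+\Theta^{T})\bs w>0$, which is exactly positive definiteness of the DOC kernels $\theta_{n-l}^{(n)}$.

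For item (II): I would test the companion identity \eqref{eq: orthogonal equality for BDF2}, $\sum_{j=1}^{n}\theta_{n-j}^{(n)}D_2v^{j}=\diff v^{n}$, against the auxiliary sequence defined by $v^{0}:=0$ together with the requirement that $D_2v^{j}=1$ for all $j\ge1$. Writing $d_{j}:=\diff v^{j}$, the starting kernel $b_0^{(1)}=2/\tau_1$ forces $d_{1}=\tau_{1}/2$, and \eqref{eq: BDF2-kernels} gives the two–term recursion $d_{j}=\bigl(\tau_{j}(1+r_{j})+r_{j}^{2}d_{j-1}\bigr)/(1+2r_{j})$ for $j\ge2$ (so the sequence is well defined; note that only $b_0^{(k)}\neq0$, not positive definiteness, is used here). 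Plugged into \eqref{eq: orthogonal equality for BDF2}, this choice collapses the left side to $\sum_{l=1}^{n}\theta_{n-l}^{(n)}$ and the right side to $d_{n}$, so the claim reduces to the elementary bound $0<d_{n}\le\tau_{n}$. That bound follows by induction on $n$: $d_{1}=\tau_{1}/2\le\tau_{1}$, and, using $r_{j}^{2}\tau_{j-1}=r_{j}\tau_{j}$, the inductive hypothesis $d_{j-1}\le\tau_{j-1}$ gives $d_{j}\le\bigl(\tau_{j}(1+r_{j})+r_{j}\tau_{j}\bigr)/(1+2r_{j})=\tau_{j}$, while positivity is obvious. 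Summing $\sum_{l=1}^{k}\theta_{k-l}^{(k)}=d_{k}\le\tau_{k}$ over $1\le k\le n$ and recalling $t_{0}=0$ yields $\sum_{k,l}^{n,k}\theta_{k-l}^{(k)}\le\sum_{k=1}^{n}\tau_{k}=t_{n}$.

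The computations involved — the congruence step, the two–term recursion, the one–line induction — are all routine; the only real idea, and the step I would regard as the crux, is the choice in (II) of the test sequence generated by $D_2v^{j}\equiv1$, which turns the unwieldy partial sum $\sum_{l}\theta_{n-l}^{(n)}$ into the transparently estimable quantity $d_{n}$. I anticipate no genuine obstacle beyond bookkeeping with the indices and treating the non-standard first–level kernel $b_0^{(1)}=2/\tau_1$ separately — which, fortunately, still satisfies $d_1=\tau_1/2\le\tau_1$, so the induction starts cleanly.
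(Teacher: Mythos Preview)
The paper does not supply its own proof of this lemma: it is quoted verbatim from \cite{LiaoZhang:2019} and left unproved. Your argument is correct in both parts.

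For (I), the congruence $\Theta+\Theta^{T}=B^{-1}(B+B^{T})B^{-T}$ is exactly the mechanism the paper itself later writes down in the Appendix as \eqref{theta-decompose} (there for the tensorised matrices $\mathbf{B}_2$, $\mathbf{\Theta}_2$, and used to prove Lemma~\ref{lem:inequality of the kernel} rather than the present lemma), so this part is essentially the standard approach.

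For (II) your route is genuinely different from the one implicit in the paper and in the cited reference. The conventional argument computes the DOC kernels in closed form --- the paper records the scaled version in \eqref{step-scaled DOC}, namely $\theta_{k-j}^{(k)}=\tau_j\frac{1+r_j}{1+2r_j}\prod_{i=j+1}^{k}\frac{r_i^{2}}{1+2r_i}$ --- and then bounds the geometric-type sum directly. Your choice of the test sequence determined by $D_2v^{j}\equiv1$ sidesteps the explicit formula entirely and reduces everything to the one-line induction $d_j\le\tau_j$; it is shorter, more conceptual, and does not require knowing $\theta_{k-j}^{(k)}$ explicitly. The only minor point worth flagging is that your induction yields the \emph{sharp} identity $\sum_{l=1}^{n}\theta_{n-l}^{(n)}=d_n$ with $d_n<\tau_n$ strictly for $n\ge1$ (since $d_1=\tau_1/2$ and the recursion preserves strict inequality), which is a tad stronger than the stated $\le\tau_n$; this is harmless.
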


The following three lemmas are the key to proving the convergence of the BDF2 scheme (\ref{eq: fully BDF2 implicit scheme}) for dealing with the nonlinear term. We describe the lemmas in detail below but put their proofs to the Appendix for brief.

\begin{lemma}\label{lem:inequality of the kernel}
Assuming that the time-step ratios satisfy $r_k\le r_s$ for $2\le k\le N$, for any vector sequence $\{\bs v^k\}_{k=1}^n\in \mathbb{R}^2$, the following inequality holds
\begin{align*}
\frac{\mathfrak{m}_1}{2\mathfrak{m}_2}\sum_{k=1}^n\tau_k(\bs v^k)^T\bs v^k\le\sum_{k,l}^{n,k}\theta_{k-l}^{(k)}(\bs v^l)^T\bs v^k\le \frac{\mathfrak{m}_3}{2}\sum_{k=1}^n\tau_k(\bs v^k)^T\bs v^k,
\end{align*}
where $\mathfrak{m}_1, \mathfrak{m}_2, \mathfrak{m}_3$ are positive constants.
\end{lemma}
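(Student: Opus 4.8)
The plan is to reduce the vector inequality to a scalar one and then, via the discrete orthogonality identity \eqref{eq: orthogonal equality for BDF2}, to recast the bilinear form built from the DOC kernels $\theta^{(k)}_{k-l}$ as a bilinear form built from the BDF2 kernels $b^{(k)}_{k-l}$, so that the positive--definiteness estimate of Lemma \ref{lem:Conv-Kernels-Positive} becomes applicable. First, writing $\bs v^k=(v_1^k,v_2^k)^T$, both $\sum_{k,l}^{n,k}\theta^{(k)}_{k-l}(\bs v^l)^T\bs v^k$ and $\sum_{k=1}^n\tau_k(\bs v^k)^T\bs v^k$ split into the corresponding scalar expressions for $\{v_1^k\}$ and $\{v_2^k\}$, so it suffices to prove, for every real sequence $\{w^k\}_{k=1}^n$,
\[
\frac{\mathfrak m_1}{2\mathfrak m_2}\sum_{k=1}^n\tau_k(w^k)^2\ \le\ \sum_{k,l}^{n,k}\theta^{(k)}_{k-l}w^lw^k\ \le\ \frac{\mathfrak m_3}{2}\sum_{k=1}^n\tau_k(w^k)^2.
\]
Given $\{w^k\}$, I would introduce the auxiliary sequence $\{V^k\}_{k\ge0}$ defined by $V^0:=0$ and $D_2V^k:=w^k$ for $1\le k\le n$; this is a triangular system, uniquely solvable since $b^{(k)}_0>0$, and \eqref{eq: orthogonal equality for BDF2} yields $\diff V^k=\sum_{l=1}^k\theta^{(k)}_{k-l}w^l$. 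Consequently
\[
\sum_{k,l}^{n,k}\theta^{(k)}_{k-l}w^lw^k=\sum_{k=1}^nw^k\diff V^k=\sum_{k=1}^n(\diff V^k)\sum_{l=1}^kb^{(k)}_{k-l}\diff V^l,
\]
i.e.\ the DOC quadratic form in $\{w^k\}$ coincides with the BDF2 quadratic form in $\{\diff V^k\}$.

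For the lower bound I would apply Lemma \ref{lem:Conv-Kernels-Positive} to the sequence $\{\diff V^k\}$, obtaining $\sum_{k}(\diff V^k)\sum_{l}b^{(k)}_{k-l}\diff V^l\ge\tfrac12\sum_kR_L(r_k,r_{k+1})(\diff V^k)^2/\tau_k\ge\tfrac{\mathfrak m_1}{2}\sum_k(\diff V^k)^2/\tau_k$, where $\mathfrak m_1$ is the infimum of $R_L$ over the admissible step ratios, which is positive precisely because $r_s<4.864$. In the other direction, from $w^k=b^{(k)}_0\diff V^k+b^{(k)}_1\diff V^{k-1}$ and the elementary bounds $\tau_k^2(b^{(k)}_0)^2<4$ and $\tau_k\tau_{k-1}(b^{(k)}_1)^2\le r_s^3/(1+r_s)^2$, both immediate from \eqref{eq: BDF2-kernels} and $r_k\le r_s$, one gets $\sum_k\tau_k(w^k)^2\le\mathfrak m_2\sum_k(\diff V^k)^2/\tau_k$ for a suitable $\mathfrak m_2=\mathfrak m_2(r_s)$. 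Combining the two estimates gives the left inequality.

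For the upper bound I would use a diagonal scaling. Let $D=\mathrm{diag}(\tau_1,\dots,\tau_n)$, let $B=(b^{(k)}_{k-l})$ be the lower-triangular BDF2 matrix, set $x_k=\diff V^k$ and $M=D^{1/2}BD^{1/2}$. Then $M$ is lower bidiagonal, with diagonal entries $\tfrac{1+2r_k}{1+r_k}\in(1,2)$ and sub-diagonal entries $-\tfrac{r_k^{3/2}}{1+r_k}$, so $M=\Lambda(I-N)$ with $\Lambda$ diagonal, $\|\Lambda^{-1}\|\le1$, and $N$ strictly lower bidiagonal satisfying $\|N\|\le r_s^{3/2}/(1+2r_s)<1$ under $r_s<4.864$; a Neumann--series estimate then gives $\|M^{-1}\|\le\bigl(1-r_s^{3/2}/(1+2r_s)\bigr)^{-1}$, which may be taken as $\tfrac12\mathfrak m_3$. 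With $\bs y=D^{-1/2}\bs x$ one has $\sum_k(\diff V^k)(D_2V^k)=\bs y^TM\bs y$ and $\sum_k\tau_k(w^k)^2=\|M\bs y\|^2$, and substituting $\bs z=M\bs y$ turns the former into $\bs z^T(M^{-1})_{\mathrm{sym}}\bs z\le\|M^{-1}\|\,\|\bs z\|^2$, which is the right inequality.

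The routine parts are the elementary estimates on $b^{(k)}_0,b^{(k)}_1$ and the Neumann--series bound; the genuine point is the uniform boundedness of $\|M^{-1}\|$ --- equivalently, the geometric decay of $\theta^{(k)}_{k-l}/\sqrt{\tau_k\tau_l}$ down each column --- which is exactly where the sharp restriction $r_s<4.864$ (the positive root of $x^{3/2}=1+2x$) is needed, rather than a weaker step-ratio bound; everything that remains is bookkeeping for the constants $\mathfrak m_1,\mathfrak m_2,\mathfrak m_3$.
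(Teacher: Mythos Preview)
Your argument is correct and lands on the same constants as the paper, but the execution differs in a couple of places worth noting. The paper works throughout at the matrix level: it symmetrises the scaled BDF2 matrix $\widetilde{\mathbf B}_2=\mathbf\Lambda_\tau\mathbf B_2\mathbf\Lambda_\tau$, bounds $\lambda_{\min}(\widetilde{\mathbf B})$ and $\lambda_{\max}(\widetilde{\mathbf B}_2^T\widetilde{\mathbf B}_2)$ by Gerschgorin, and then passes to the DOC form via the identity $\mathbf\Theta=(\mathbf B_2^{-1})^T\mathbf B\,\mathbf B_2^{-1}$ together with a Cholesky factorisation $\widetilde{\mathbf B}=\widetilde{\mathbf U}^T\widetilde{\mathbf U}$; the upper bound comes from the explicit product formula for the scaled DOC kernels $\tilde\theta^{(k)}_{k-j}$ and another Gerschgorin estimate on $\widetilde\Theta$. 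Your route is more elementary on the lower side: the auxiliary sequence $V^k$ with $D_2V^k=w^k$ turns the DOC quadratic form directly into the BDF2 quadratic form in $\diff V^k$, so Lemma~\ref{lem:Conv-Kernels-Positive} applies without the Cholesky step, and the comparison of $\sum\tau_k(w^k)^2$ with $\sum(\diff V^k)^2/\tau_k$ is a two-line kernel estimate. On the upper side your Neumann-series bound for $\|M^{-1}\|$ with $M=\Lambda(I-N)$ and $\|N\|\le r_s^{3/2}/(1+2r_s)$ is exactly the geometric decay the paper extracts from the product formula for $\tilde\theta^{(k)}_{k-j}$, so the two arguments are equivalent; both identify $\mathfrak m_3=2/(1-m_*)$ and both need $r_s<4.864$ precisely so that $m_*<1$. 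In short, the paper's proof is a pure spectral/Gerschgorin argument with a Cholesky pivot, while yours trades the Cholesky step for a direct appeal to Lemma~\ref{lem:Conv-Kernels-Positive}; the latter is slightly shorter and reuses existing machinery, the former keeps everything in one uniform matrix language.
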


\begin{lemma}\label{lem: DOC property2}
If the time-step ratios satisfy $r_k\le r_s$ for $2\le k\le N$, for any vector sequences $\{\bs v^k, \bs w^k\}_{k=1}^n,$ where $\bs v^k, \bs w^k\in \mathbb{R}^2$, it holds that
\begin{align*}
\sum_{k,l}^{n,k}\theta_{k-l}^{(k)}(\bs{v}^l)^T \bs{w}^k\le \epsilon\sum_{k=1}^{n}\tau_k(\bs{v}^k)^T\bs{v}^k+
\frac{\mathfrak{m}_{3}}{4\mathfrak{m}_1\epsilon}\sum_{k=1}^{n}\tau_k(\bs{w}^k)^T\bs{w}^k ~~for~ \forall \epsilon>0.
\end{align*}
\end{lemma}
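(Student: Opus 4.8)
The plan is to rewrite the mixed DOC sum on the left as an ordinary convolution sum against the positive definite BDF2 kernels $b^{(k)}_{k-l}$, exploiting the duality between $b^{(k)}_{k-l}$ and $\theta^{(k)}_{k-l}$ contained in the orthogonal identity \eqref{eq: orthogonal identity}, and then to close with Young's inequality. First I would set $\bs z^k:=\sum_{l=1}^{k}\theta^{(k)}_{k-l}\bs v^l$ for $1\le k\le n$, so that
\[
\sum\limits_{k,l}^{n,k}\theta^{(k)}_{k-l}(\bs v^l)^T\bs w^k=\sum_{k=1}^{n}(\bs w^k)^T\bs z^k
\qquad\text{and}\qquad
\sum\limits_{k,l}^{n,k}\theta^{(k)}_{k-l}(\bs v^l)^T\bs v^k=\sum_{k=1}^{n}(\bs v^k)^T\bs z^k .
\]
Since \eqref{eq: orthogonal identity} states that the lower-triangular matrices with $(k,l)$-entries $\theta^{(k)}_{k-l}$ and $b^{(k)}_{k-l}$ (extended by zero for $l>k$) are two-sided inverses of one another, the companion identity $\sum_{j=k}^{n}b^{(n)}_{n-j}\theta^{(j)}_{j-k}=\delta_{nk}$ also holds, i.e. $\bs v^k=\sum_{l=1}^{k}b^{(k)}_{k-l}\bs z^l$. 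Substituting this into $\sum_{k}(\bs v^k)^T\bs z^k$ gives the identity
\[
\sum\limits_{k,l}^{n,k}\theta^{(k)}_{k-l}(\bs v^l)^T\bs v^k=\sum\limits_{k,l}^{n,k}b^{(k)}_{k-l}(\bs z^l)^T\bs z^k .
\]

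Next I would apply Lemma \ref{lem:Conv-Kernels-Positive} to the two scalar components of the sequence $\{\bs z^k\}$ and add the results: under the step-ratio restriction $r_k\le r_s$ the coefficients $R_L(r_k,r_{k+1})$ admit a uniform positive lower bound $\mathfrak m_1$ (the constant appearing in Lemma \ref{lem:inequality of the kernel}), whence the right-hand side above is bounded below by $\frac{\mathfrak m_1}{2}\sum_{k=1}^{n}\tau_k^{-1}|\bs z^k|^2$. On the other hand, the upper bound of Lemma \ref{lem:inequality of the kernel} gives $\sum\limits_{k,l}^{n,k}\theta^{(k)}_{k-l}(\bs v^l)^T\bs v^k\le\frac{\mathfrak m_3}{2}\sum_{k=1}^{n}\tau_k|\bs v^k|^2$. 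Chaining these two bounds yields the key estimate
\[
\sum_{k=1}^{n}\frac{|\bs z^k|^2}{\tau_k}\le\frac{\mathfrak m_3}{\mathfrak m_1}\sum_{k=1}^{n}\tau_k|\bs v^k|^2 .
\]
Finally, by the Cauchy--Schwarz inequality in $\mathbb R^2$ and the weighted Young inequality $|\bs w^k|\,|\bs z^k|\le\frac{\mathfrak m_1\epsilon}{\mathfrak m_3}\,\tau_k^{-1}|\bs z^k|^2+\frac{\mathfrak m_3}{4\mathfrak m_1\epsilon}\,\tau_k|\bs w^k|^2$, summed over $k$ and combined with the key estimate, one obtains $\sum_{k=1}^{n}(\bs w^k)^T\bs z^k\le\epsilon\sum_{k=1}^{n}\tau_k|\bs v^k|^2+\frac{\mathfrak m_3}{4\mathfrak m_1\epsilon}\sum_{k=1}^{n}\tau_k|\bs w^k|^2$, which is exactly the assertion.

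I expect the main obstacle to be the reduction in the first paragraph. One must verify the reversed orthogonality $\sum_{j=k}^{n}b^{(n)}_{n-j}\theta^{(j)}_{j-k}=\delta_{nk}$ --- immediate once one reads \eqref{eq: orthogonal identity} as the matrix identity $\Theta B=I$ for finite triangular matrices, so that $B\Theta=I$ as well --- and one must make sure that Lemma \ref{lem:Conv-Kernels-Positive} is being invoked for the \emph{derived} sequence $\{\bs z^k\}$ (componentwise) rather than for $\{\bs v^k\}$ directly. Once the identity $\sum_{k,l}^{n,k}\theta^{(k)}_{k-l}(\bs v^l)^T\bs v^k=\sum_{k,l}^{n,k}b^{(k)}_{k-l}(\bs z^l)^T\bs z^k$ and the lower bound $R_L(r_k,r_{k+1})\ge\mathfrak m_1$ are in hand, the remaining steps are routine applications of Young's inequality together with the bounds already recorded in Lemmas \ref{lem:inequality of the kernel} and \ref{lem:Conv-Kernels-Positive}.
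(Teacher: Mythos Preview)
Your proof is correct and reaches the stated inequality with the right constants, but it follows a somewhat different path than the paper. The paper's argument (in the Appendix) first invokes a lemma from \cite{2022-Ji+JSC} to obtain the intermediate estimate
\[
\sum_{k,l}^{n,k}\theta_{k-l}^{(k)}(\bs v^l)^T\bs w^k\le \varepsilon\sum_{k,l}^{n,k}\theta_{k-l}^{(k)}(\bs v^l)^T\bs v^k+\frac{1}{2\varepsilon}\,\bs w^T\mathbf B^{-1}\bs w,
\]
and then bounds $\bs w^T\mathbf B^{-1}\bs w$ via the Cholesky factor $\widetilde{\mathbf U}$ of $\widetilde{\mathbf B}$ together with the spectral estimate $\lambda_{\min}(\widetilde{\mathbf B})\ge\mathfrak m_1$, finishing by choosing $\varepsilon=2\epsilon/\mathfrak m_3$ and using the upper bound of Lemma~\ref{lem:inequality of the kernel}. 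Your route instead works entirely at the level of sequences: the substitution $\bs z^k=\sum_{l\le k}\theta^{(k)}_{k-l}\bs v^l$ reduces the problem to bounding $\sum_k\tau_k^{-1}|\bs z^k|^2$, which you obtain by combining Lemma~\ref{lem:Conv-Kernels-Positive} (the pointwise lower bound $R_L(r_k,r_{k+1})\ge\mathfrak m_1$) with the upper bound of Lemma~\ref{lem:inequality of the kernel}, followed by a weighted Young inequality on $(\bs w^k)^T\bs z^k$. The identification $\min_{0\le z,s\le r_s}R_L(z,s)=R_L(r_s,r_s)=\mathfrak m_1$ that you need is indeed the same constant defined in the Appendix via Gerschgorin's theorem, so the two lower bounds match. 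Your approach is more self-contained (no external citation, no Cholesky factor or matrix norms), while the paper's approach fits naturally into the matrix framework $\mathbf\Theta=(\mathbf B_2^{-1})^T\mathbf B\,\mathbf B_2^{-1}$ already set up for Lemma~\ref{lem:inequality of the kernel}.
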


\begin{lemma}\label{nonlinear-embedding inequality}
Assume that the time-step ratios satisfy $r_k\le r_s$ for $2\le k\le N$, consider the grid function $u^k$ and any vector sequences $\{\bs{z}^k, \bs{w}^k\}_{k=1}^n,$
where $\bs{z}^k, \bs{w}^k\in \mathbb{R}^2, $
and there exists a constant $C_u$ such that $$C_u=\max_{1\le j\le n}\|u^j\|_{l^3}.$$
Then it holds that
\begin{align*}
\sum_{k,l}^{n,k}\theta_{k-l}^{(k)}\langle u^l\bs{z}^l, \bs{w}^k\rangle
\le \varepsilon\sum_{k=1}^{n}\tau_k(\|\nabla_h\bs{z}^k\|^2+\|\bs z^k\|^2)
+\frac{C_\Omega C_u^2\mathfrak{m}_3}{4\mathfrak{m}_1\varepsilon}\sum_{k=1}^{n}\tau_k\|\bs{w}^k\|^2.
\end{align*}
\end{lemma}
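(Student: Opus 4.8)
The plan is to reduce the claim to the purely vectorial estimate of Lemma~\ref{lem: DOC property2} by freezing the spatial variable, and then to dispose of the scalar weight $u^l$ with a Hölder splitting followed by a discrete Sobolev embedding.

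First I would observe that, by the definition of the discrete inner product, the triple sum decomposes nodewise as
\begin{align*}
\sum_{k,l}^{n,k}\theta_{k-l}^{(k)}\langle u^l\bs z^l,\bs w^k\rangle
=h^2\!\!\sum_{\mathbf{x}_h\in\Omega_h}\ \sum_{k,l}^{n,k}\theta_{k-l}^{(k)}\brab{u^l(\mathbf{x}_h)\bs z^l(\mathbf{x}_h)}^T\bs w^k(\mathbf{x}_h).
\end{align*}
For each fixed node $\mathbf{x}_h$, both $\{u^l(\mathbf{x}_h)\bs z^l(\mathbf{x}_h)\}_{l=1}^n$ and $\{\bs w^k(\mathbf{x}_h)\}_{k=1}^n$ are sequences in $\mathbb{R}^2$, so Lemma~\ref{lem: DOC property2} applies with $\bs v^k\mapsto u^k(\mathbf{x}_h)\bs z^k(\mathbf{x}_h)$ and an arbitrary parameter $\epsilon>0$. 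The crucial point is that the constants $\mathfrak{m}_1,\mathfrak{m}_3$ and the choice of $\epsilon$ in that lemma are universal in the vector sequences, hence the same $\epsilon$ may be used at every grid point. Multiplying the nodewise estimates by $h^2$ and summing over $\Omega_h$ gives
\begin{align*}
\sum_{k,l}^{n,k}\theta_{k-l}^{(k)}\langle u^l\bs z^l,\bs w^k\rangle
\le\epsilon\sum_{k=1}^n\tau_k\,h^2\!\!\sum_{\mathbf{x}_h\in\Omega_h}\abs{u^k(\mathbf{x}_h)}^2\abs{\bs z^k(\mathbf{x}_h)}^2
+\frac{\mathfrak{m}_3}{4\mathfrak{m}_1\epsilon}\sum_{k=1}^n\tau_k\mynorm{\bs w^k}^2.
\end{align*}

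Next I would estimate the weight term $h^2\sum_{\mathbf{x}_h}\abs{u^k}^2\abs{\bs z^k}^2$. Applying the discrete Hölder inequality with conjugate exponents $3/2$ and $3$ yields $h^2\sum_{\mathbf{x}_h}\abs{u^k}^2\abs{\bs z^k}^2\le\mynorm{u^k}_{l^3}^2\mynorm{\bs z^k}_{l^6}^2\le C_u^2\mynorm{\bs z^k}_{l^6}^2$, using $C_u=\max_{1\le j\le n}\mynorm{u^j}_{l^3}$. Since $\Omega\subset\mathbb{R}^2$, the discrete Sobolev embedding $H^1\hookrightarrow l^6$ holds (applied to each component of $\bs z^k$), i.e. $\mynorm{\bs z^k}_{l^6}^2\le C_\Omega\brab{\mynorm{\nabla_h\bs z^k}^2+\mynorm{\bs z^k}^2}$ with $C_\Omega$ independent of $h$ and $\tau_n$. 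Combining these two bounds controls the weight term by $C_\Omega C_u^2\brab{\mynorm{\nabla_h\bs z^k}^2+\mynorm{\bs z^k}^2}$.

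Finally I would insert this into the displayed estimate and choose $\epsilon=\varepsilon/(C_\Omega C_u^2)$; the first term then becomes $\varepsilon\sum_k\tau_k(\mynorm{\nabla_h\bs z^k}^2+\mynorm{\bs z^k}^2)$ and the second becomes $\frac{C_\Omega C_u^2\mathfrak{m}_3}{4\mathfrak{m}_1\varepsilon}\sum_k\tau_k\mynorm{\bs w^k}^2$, which is precisely the claimed inequality. The only genuinely delicate step is the nodewise invocation of Lemma~\ref{lem: DOC property2}: it is legitimate exactly because that lemma's constants do not see the sequences, so one uniform $\epsilon$ works before summing in space; the Hölder split and the two-dimensional discrete $l^6$-embedding are standard.
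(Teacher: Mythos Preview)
Your proof is correct and follows essentially the same route as the paper: apply Lemma~\ref{lem: DOC property2} with $\bs v^j=u^j\bs z^j$, then bound $\|u^k\bs z^k\|$ via the H\"older split $\|u^k\|_{l^3}\|\bs z^k\|_{l^6}$ and the discrete $H^1\hookrightarrow l^6$ embedding, and finally rescale the parameter. The only difference is cosmetic: you spell out the nodewise decomposition and the uniform-$\epsilon$ argument explicitly, whereas the paper applies Lemma~\ref{lem: DOC property2} directly at the level of grid functions (the spatial summation being tacit in the inner product and norms).
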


The following embedding inequalities on $\|\nabla_h v\|_{l^4}$ and $\|\nabla_h v\|_{l^6}$ are used to control the norms in the convergence analysis of the BDF2 scheme (\ref{eq: fully BDF2 implicit scheme}).

\begin{lemma}\label{gradient-bound}
For the grid functions $v_h\in \mathbb{V}_h, ~\mathrm{x}_h\in\Omega_h,$ it holds that
\begin{align}
\|\nabla_hv\|_{l^4}\le K_1 \|\nabla_hv\|^\frac{1}{2}\Big(2\|\Delta_hv|^2+\frac{1}{L^2}\|\nabla_hv\|^2\Big)^\frac{1}{4}\label{l_4embedding inequality}
\end{align}
and
\begin{align}
\|\nabla_hv\|_{l^6}\le K_2\|\nabla_hv\|^{\frac{1}{3}}\braB{16\|\Delta_hv\|^2+\frac{1}{L^2}\|\nabla_hv\|^2}^{\frac{1}{3}},\label{l_6embedding inequality}
\end{align}
where $K_1$ and $K_2$ are two constants.
\end{lemma}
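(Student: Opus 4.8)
The plan is to establish both \eqref{l_4embedding inequality} and \eqref{l_6embedding inequality} by a discrete analogue of the Gagliardo--Nirenberg interpolation inequality, reducing everything to the one-dimensional discrete Sobolev inequality together with the Cauchy--Schwarz inequality and discrete summation by parts with periodic boundary conditions. First I would treat the component $\Delta_x v$ of $\nabla_h v$ (the $\Delta_y v$ component being symmetric). For the $l^4$ bound, I would write $\|\Delta_x v\|_{l^4}^4=h^2\sum_{i,j}|\Delta_x v_{ij}|^4$ and, for each fixed row $j$, use the one-dimensional periodic embedding $\max_i|\Delta_x v_{ij}|^2\le C\big(\|\Delta_x v_{\cdot j}\|_{1d}\,\|\delta_x(\Delta_x v)_{\cdot j}\|_{1d}+\tfrac1L\|\Delta_x v_{\cdot j}\|_{1d}^2\big)$, which follows from the telescoping identity $w_i^2-w_k^2=\sum (w_{l+1}^2-w_l^2)$ and averaging over $k$. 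Then $\|\Delta_x v\|_{l^4}^4\le \big(\max_{i,j}|\Delta_x v_{ij}|^2\big)\cdot\|\Delta_x v\|^2$, and one bounds the maximum by the supremum over $j$ of the 1D estimate, controls $\|\delta_x\Delta_x v\|$ by $\|\Delta_h v\|$-type quantities, and recombines rows via Cauchy--Schwarz in $j$. The factor $2\|\Delta_h v\|^2+\frac1{L^2}\|\nabla_h v\|^2$ on the right is exactly what appears after estimating $\|\delta_x\Delta_x v\|^2+\|\delta_y\Delta_x v\|^2$ by $\|\Delta_h v\|^2$ and absorbing the zeroth-order term.

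For \eqref{l_6embedding inequality} I would follow the same template but with the sharper interpolation $\max_i|w_i|^2\le C\big(\|w\|_{1d}\|w'\|_{1d}+\tfrac1L\|w\|_{1d}^2\big)$ applied to $w=\Delta_x v$, combined with $\|\Delta_x v\|_{l^6}^6\le\big(\max|\Delta_x v|^4\big)\|\Delta_x v\|^2$; raising the max-bound to the square and keeping track of the exponents produces the power $\|\nabla_h v\|^{1/3}$ and the bracket $\big(16\|\Delta_h v\|^2+\frac1{L^2}\|\nabla_h v\|^2\big)^{1/3}$, the constant $16$ coming from the cross terms generated when the one-dimensional estimate is squared. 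Throughout, the constants $K_1,K_2$ absorb the universal constants from the 1D embedding and from the elementary inequality $(a+b)^p\le 2^{p-1}(a^p+b^p)$ used to split the $x$- and $y$-contributions.

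The main obstacle I expect is bookkeeping rather than conceptual: one must be careful that the discrete second-difference quantities $\delta_x\Delta_x v$, $\delta_y\Delta_x v$ that arise are genuinely controlled by $\|\Delta_h v\|$ (note $\Delta_x$ is the centered first difference, not $\delta_x$, so $\delta_x\Delta_x v$ is a centered-then-forward difference, which is bounded by $\|\delta_x^2 v\|$ up to the grid and hence by $\|\Delta_h v\|$), and that the zeroth-order remainder from averaging over the base point $k$ in the telescoping identity is precisely the $\tfrac1{L^2}\|\nabla_h v\|^2$ term. Once these identifications are made, both inequalities follow by assembling the pieces; I would relegate the detailed arithmetic to the Appendix as the paper does for the neighbouring lemmas.
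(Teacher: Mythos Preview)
Your overall ingredients (telescoping identity, one-dimensional discrete embedding, Cauchy--Schwarz) are the right ones, but the factorization you propose has a real gap. You write
\[
\|\Delta_x v\|_{l^4}^4\le\Big(\max_{i,j}|\Delta_x v_{ij}|^2\Big)\,\|\Delta_x v\|^2,
\qquad
\|\Delta_x v\|_{l^6}^6\le\Big(\max_{i,j}|\Delta_x v_{ij}|^4\Big)\,\|\Delta_x v\|^2,
\]
and then intend to bound the \emph{global} maximum $\max_{i,j}$ by ``the supremum over $j$ of the 1D estimate'' and afterwards ``recombine rows via Cauchy--Schwarz in $j$''. These two steps are incompatible: once you have taken a supremum over $j$ there is nothing left to sum. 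More seriously, a 1D embedding on row $j$ only controls $\max_i|\Delta_x v_{ij}|^2$ in terms of the 1D row norms $\|\Delta_x v_{\cdot j}\|_{1d}$ and $\|\delta_x(\Delta_x v)_{\cdot j}\|_{1d}$; taking $\sup_j$ of such products is \emph{not} dominated by the 2D $L^2$ norms you want on the right (this is exactly the failure of $H^1\hookrightarrow L^\infty$ in two dimensions).

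The paper avoids this by the standard anisotropic trick. For the $l^6$ case it never pulls out a global maximum; instead it writes
\[
h^2\sum_{i,j}|\Delta_x v_{ij}|^6
\le\Big(h\sum_{i}\max_{j}|\Delta_x v_{ij}|^3\Big)\Big(h\sum_{j}\max_{i}|\Delta_x v_{ij}|^3\Big),
\]
so that each factor involves only a \emph{one}-dimensional maximum. Telescoping $|\Delta_x v_{ij}|^3$ in $i$ (resp.\ in $j$) and averaging over the base point gives
$h\sum_j\max_i|\Delta_x v_{\cdot j}|^3\le \|\Delta_x v\|_{l^4}^2\big(4\|\delta_x^2 v\|+\tfrac1L\|\Delta_x v\|\big)$ (resp.\ the mixed $\|\delta_x\delta_y v\|$ version), and multiplying the two yields $\|\Delta_x v\|_{l^6}^6\le C\,\|\nabla_h v\|_{l^4}^4\big(\|\Delta_h v\|^2+\tfrac1{L^2}\|\nabla_h v\|^2\big)$. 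In particular the $l^6$ bound is obtained by \emph{bootstrapping through} the $l^4$ bound, not directly as you propose. The $l^4$ inequality is proved by the same anisotropic splitting with $|\Delta_x v|^2$ in place of $|\Delta_x v|^3$. If you rewrite your argument with the row-wise maximum inside the $j$-sum (before any Cauchy--Schwarz) rather than a global maximum outside, the rest of your plan goes through and matches the paper.
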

By virtue of Lemma \ref{bound-numerical-solution} and Lemma \ref{gradient-bound}, there exists a constant $K$ such that
\begin{align}\label{l6boundedness}
\|\nabla_hu^n\|_{l^6}\le K.
\end{align}

Now we present the error behavior of BDF2 time-stepping
with respect to the variation of time-step sizes with the following two lemmas.
\begin{lemma}{\rm\cite{LiaoZhang:2019}}\label{lem:BDF2-Consistency-Error}
For the consistency error $\xi^j=D_2u(t_j)-\partial_tu(t_j)$ at $t=t_j$, Let $P^k$ be a convolutional consistency error, defined by
$P^k:=\sum\limits_{j=1}^k\theta_{k-j}^{(k)}\xi^j.$
If the time-step ratios satisfy $r_k\le r_s$ for $2\le k\le N$, the convolutional consistency error $P^k$ satisfies
\begin{align*}
\sum_{k=1}^n\absb{P^k}
\le&\,3t_n\max_{1\leq j\leq n}\Big(\tau_{j}\int_{t_{j-1}}^{t_j} \absb{u'''(s)} \zd{s}\Big) \quad\text{for $2\le n\le N.$}
\end{align*}
\end{lemma}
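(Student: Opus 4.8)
The plan is to reduce the estimate to a pointwise bound on the local consistency error $\xi^j$ and then to sum it against the DOC kernels via Lemma~\ref{lem: DOC property}(II). First I would derive a Peano-kernel representation of $\xi^j$. For $j\ge 2$, expanding $u(t_{j-1})$ and $u(t_{j-2})$ about $t_j$ by the second-order Taylor formula with integral remainder, the contributions of $u(t_j)$, $u'(t_j)$ and $u''(t_j)$ collapse exactly to $u'(t_j)$, because the kernels in \eqref{eq: BDF2-kernels} satisfy $b_0^{(j)}\tau_j+b_1^{(j)}\tau_{j-1}=1$ and $b_0^{(j)}\tau_j^2+2b_1^{(j)}\tau_{j-1}\brab{\tau_j+\tfrac12\tau_{j-1}}=0$. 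What survives, for $j\ge 2$, is
\begin{align*}
\xi^j=\brab{b_0^{(j)}-b_1^{(j)}}\int_{t_{j-1}}^{t_j}\frac{(s-t_{j-1})^2}{2}\,u'''(s)\,\zd s+b_1^{(j)}\int_{t_{j-2}}^{t_j}\frac{(s-t_{j-2})^2}{2}\,u'''(s)\,\zd s ,
\end{align*}
while the starting level is treated separately: the second-order formula defining $D_2u^1$ together with a Taylor expansion about $t_0$ gives $\xi^1=\int_{t_0}^{t_1}(t_1-s)\brab{\tfrac{t_1-s}{\tau_1}-1}u'''(s)\,\zd s$.

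Next I would bound the Peano kernels. Substituting $b_0^{(j)}-b_1^{(j)}=(1+r_j)/\tau_j$ and $b_1^{(j)}=-\frac{r_j^2}{\tau_j(1+r_j)}$ and splitting $\int_{t_{j-2}}^{t_j}=\int_{t_{j-2}}^{t_{j-1}}+\int_{t_{j-1}}^{t_j}$, a short computation shows that on $[t_{j-1},t_j]$ the combined kernel has modulus $\le\frac{(1+r_j)\tau_j}{2(1+2r_j)}\le\frac{\tau_j}{2}$ and on $[t_{j-2},t_{j-1}]$ the kernel has modulus $\le\frac{\tau_j}{2(1+r_j)}\le\frac{\tau_j}{2}$; similarly the kernel of $\xi^1$ has modulus $\le\tau_1/4$. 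Hence, with the convention $t_{-1}:=t_0$,
\begin{align*}
\absb{\xi^j}\le\frac{\tau_j}{2}\int_{t_{j-2}}^{t_j}\absb{u'''(s)}\,\zd s ,\qquad 1\le j\le n .
\end{align*}
Writing $G:=\max_{1\le l\le n}\brab{\tau_l\int_{t_{l-1}}^{t_l}\abs{u'''(s)}\,\zd s}$ and using $\tau_j\int_{t_{j-2}}^{t_{j-1}}\abs{u'''}=r_j\tau_{j-1}\int_{t_{j-2}}^{t_{j-1}}\abs{u'''}\le r_jG$ together with $r_j\le r_s<4.864$, this gives $\absb{\xi^j}\le\tfrac12(1+r_s)G\le 3G$ for all $j$.

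It then remains to pass to $P^k$ and sum over $k$. Since the DOC kernels $\theta_{k-j}^{(k)}$ are nonnegative under the ratio restriction $r_k\le r_s$ — a consequence of the positive-definiteness of the $b_{n-k}^{(n)}$ recorded in Lemma~\ref{lem: DOC property} — the triangle inequality gives $\absb{P^k}\le\sum_{j=1}^k\theta_{k-j}^{(k)}\absb{\xi^j}$. Summing over $k$, using $\absb{\xi^j}\le 3G$, and invoking Lemma~\ref{lem: DOC property}(II) in the form $\sum_{k,l}^{n,k}\theta_{k-l}^{(k)}\le t_n$,
\begin{align*}
\sum_{k=1}^n\absb{P^k}\le\sum_{k,l}^{n,k}\theta_{k-l}^{(k)}\absb{\xi^l}\le 3G\sum_{k,l}^{n,k}\theta_{k-l}^{(k)}\le 3t_nG ,
\end{align*}
which is the claimed inequality since $G=\max_{1\le j\le n}\brat{\tau_j\int_{t_{j-1}}^{t_j}\abs{u'''(s)}\,\zd s}$.

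I expect the main obstacle to be the Peano-kernel estimate of the second step: one has to verify the exact cancellation of the $u''$-term — which forces careful bookkeeping with $b_0^{(j)}$, $b_1^{(j)}$ and $\tau_j=r_j\tau_{j-1}$ — and then check that the combined kernel on $[t_{j-1},t_j]$ stays at or below $\tau_j/2$ uniformly for all admissible ratios $r_j\in(0,r_s)$, which is the only place where a genuine maximization in $r_j$ enters. Secondary points requiring care are treating the starting level $j=1$ on the same footing as $j\ge 2$, and making explicit that the DOC kernels are nonnegative, which is what legitimizes the triangle inequality bound for $P^k$.
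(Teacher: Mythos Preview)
The paper does not supply its own proof of this lemma: it is quoted verbatim from \cite{LiaoZhang:2019} and used as a black box, so there is no in-paper argument to compare against. Your proposal is therefore not a reconstruction of a hidden proof but an independent verification, and as such the overall strategy is sound: a Peano--kernel bound on $\xi^j$ followed by summation against the DOC kernels via Lemma~\ref{lem: DOC property}(II) is exactly the mechanism behind the cited result.

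One small correction is warranted. You justify the nonnegativity of the DOC kernels $\theta_{k-j}^{(k)}$ as ``a consequence of the positive-definiteness of the $b_{n-k}^{(n)}$ recorded in Lemma~\ref{lem: DOC property}''. Positive definiteness alone does not force individual kernel entries to be nonnegative. What actually gives $\theta_{k-j}^{(k)}\ge 0$ here is the explicit two-term recursion coming from \eqref{eq: DOC-Kernels} and \eqref{eq: BDF2-kernels}: since $b_1^{(j)}<0$ and $b_0^{(j)}>0$, the relation $\theta_{k-j}^{(k)}=-\frac{b_1^{(j+1)}}{b_0^{(j)}}\,\theta_{k-j-1}^{(k)}$ propagates positivity from $\theta_0^{(k)}=1/b_0^{(k)}>0$ (cf.\ the closed form \eqref{step-scaled DOC} in the Appendix). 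Replace your positive-definiteness remark with this direct argument and the rest of your chain --- the Peano-kernel bounds, the estimate $\abs{\xi^j}\le\tfrac12(1+r_s)G<3G$, and the final use of $\sum_{k,l}^{n,k}\theta_{k-l}^{(k)}\le t_n$ --- goes through as written.
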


\begin{lemma}{\rm\cite{LiaoZhang:2019}}\label{lem: Gronwall inequality}
Let $\lambda\geq0,$ the sequences $\{\xi_j\}_{j=0}^N$ and $\{V_j\}_{j=1}^N$ be nonnegative. If
$$V_n\le \lambda\sum_{j=1}^{n-1}\tau_jV_j+\sum_{j=1}^{n}\xi_j ~~for~ 1\le n\le N,$$
then it holds that
$$V_n\le \exp(\lambda t_{n-1})\sum_{j=1}^{n}\xi_j ~~for~ 1\le n\le N.$$
\end{lemma}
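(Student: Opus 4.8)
The plan is to prove the bound by induction on $n$, but the cleanest route is to first absorb the $V_n$-self-reference that is hidden in the statement. Note that the hypothesis reads $V_n\le\lambda\sum_{j=1}^{n-1}\tau_jV_j+\sum_{j=1}^n\xi_j$, so the sum on the right stops at $n-1$ and there is \emph{no} $\tau_nV_n$ term to move to the left; this is the reason no smallness condition like $\lambda\tau_n<1$ is needed. So I would start by setting $S_n:=\sum_{j=1}^n\xi_j$, which is nondecreasing in $n$, and rewrite the hypothesis as $V_n\le\lambda\sum_{j=1}^{n-1}\tau_jV_j+S_n$.

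First I would handle the base case $n=1$: the sum $\sum_{j=1}^{0}$ is empty, so $V_1\le S_1=\sum_{j=1}^1\xi_j\le\exp(\lambda t_0)S_1$ since $\exp(\lambda t_0)=\exp(0)=1$ (here $t_0=0$), giving the claim. For the inductive step, assume $V_j\le\exp(\lambda t_{j-1})S_j$ for all $j\le n-1$. Substituting into the hypothesis for index $n$,
\begin{align*}
V_n\le\lambda\sum_{j=1}^{n-1}\tau_j\exp(\lambda t_{j-1})S_j+S_n
\le S_n\Big(1+\lambda\sum_{j=1}^{n-1}\tau_j\exp(\lambda t_{j-1})\Big),
\end{align*}
where I used that $S_j\le S_n$ for $j\le n$. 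It then remains to show $1+\lambda\sum_{j=1}^{n-1}\tau_j\exp(\lambda t_{j-1})\le\exp(\lambda t_{n-1})$. This is the one genuinely quantitative step, and I expect it to be the main (though mild) obstacle: it is a Riemann-sum/convexity estimate. Since $\exp(\lambda\cdot)$ is increasing, $\tau_j\exp(\lambda t_{j-1})\le\int_{t_{j-1}}^{t_j}\exp(\lambda s)\,\mathrm{d}s$, so $\lambda\sum_{j=1}^{n-1}\tau_j\exp(\lambda t_{j-1})\le\lambda\int_{0}^{t_{n-1}}\exp(\lambda s)\,\mathrm{d}s=\exp(\lambda t_{n-1})-1$, and adding $1$ gives exactly $\exp(\lambda t_{n-1})$.

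Combining, $V_n\le S_n\exp(\lambda t_{n-1})=\exp(\lambda t_{n-1})\sum_{j=1}^n\xi_j$, closing the induction. (If $\lambda=0$ the chain degenerates harmlessly to $V_n\le S_n$.) Alternatively, one could avoid the integral comparison entirely by using the elementary inequality $1+x\le e^x$ repeatedly to telescope $\prod_{j=1}^{n-1}(1+\lambda\tau_j)\le\exp(\lambda\sum_{j=1}^{n-1}\tau_j)=\exp(\lambda t_{n-1})$ after first reducing, via a standard manipulation, to the product form; but the Riemann-sum argument above is the shortest and is the one I would write out.
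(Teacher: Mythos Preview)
Your argument is correct: the induction closes cleanly, the Riemann-sum bound $\tau_j\exp(\lambda t_{j-1})\le\int_{t_{j-1}}^{t_j}\exp(\lambda s)\,\mathrm{d}s$ gives exactly $1+\lambda\sum_{j=1}^{n-1}\tau_j\exp(\lambda t_{j-1})\le\exp(\lambda t_{n-1})$, and the monotonicity $S_j\le S_n$ is all that is needed to pull the forcing term out. The paper itself does not supply a proof of this lemma at all---it simply cites it from \cite{LiaoZhang:2019}---so there is no ``paper's own proof'' to compare against; your self-contained induction is a perfectly good way to fill the gap.
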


Now, we set about to demonstrate the convergence of the variable-step BDF2 scheme (\ref{eq: fully BDF2 implicit scheme}). Let $C_{1}=\max_{(x,y,t)\in\Omega\times[0,T]}\big\{|u(x,y,t)|,|\nabla u(x,y,t)|,|\Delta u(x,y,t)|\big\}.$
Denoting that $ e_h^n=U_h^n-u_h^n,\ \mathrm{x}_h\in\overline{\Omega}_h,~ 0\le n\le N,$
 we get the error equation as follows
 \begin{align}
 &D_2e_h^n+\delta\Delta_h^2e_h^n-\nabla_h\cdot(f(\nabla_h U_h^n)-f(\nabla_h u_h^n))=\xi_h^n+\eta_h^n,~~\mathrm{x}_h^n\in\Omega_h,~1\le n\le N, \label{error scheme}
 \end{align}
where $\xi_h^n,~\eta_h^n$ denote the local consistency error in time and space.
 \begin{theorem}\label{th3.2}
Suppose the problem (\ref{cont: MBE model}) has a unique smooth solution and $u_h^n\in\mathbb{V}_h$ is the solution of the difference scheme  (\ref{eq: fully BDF2 implicit scheme}). If the time-step ratios satisfy $r_k\le r_s$ for $2\le k\le N$ with the maximum time-step size $\tau\le\tau_0,$ where $\tau_0$ is a constant, the variable-step BDF2 scheme (\ref{eq: fully BDF2 implicit scheme}) is convergent in $L^2$ norm.
$$ \|e^n\|\le C(\tau^2+h^2),~~1\le n\le N.$$
\end{theorem}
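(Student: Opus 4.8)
The plan is to apply the DOC-kernel machinery in the standard way: convolve the error equation \eqref{error scheme} with the DOC kernels $\theta_{k-j}^{(k)}$, sum over $j$, use the orthogonality identity \eqref{eq: orthogonal equality for BDF2} to turn $\sum_j\theta_{k-j}^{(k)}D_2e^j$ into $\diff e^k$, and then take the inner product with $e^k$ (or rather, sum the resulting relation against $e^k$ in the discrete $L^2$ inner product). Concretely, after convolution the $D_2$ term becomes $\diff e^k$, the biharmonic term becomes $\delta\Delta_h^2\big(\sum_j\theta_{k-j}^{(k)}e^j\big)$, and the nonlinear and truncation terms become their convolutional counterparts. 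Taking $\langle\cdot,e^k\rangle$ and summing over $k=1,\dots,n$, the first term yields $\sum_k\langle\diff e^k,e^k\rangle\ge\frac12\|e^n\|^2-\frac12\|e^0\|^2$ (and $e^0=0$ up to the $O(\tau^2+h^2)$ initialization error, by the choice of $u_h^0$), the biharmonic term is handled by positive definiteness of the DOC kernels (Lemma \ref{lem: DOC property}(I)) together with summation by parts so that it contributes a nonnegative quantity like $c\sum_{k,l}\theta_{k-l}^{(k)}\langle\Delta_h e^l,\Delta_h e^k\rangle\ge0$, and the consistency terms are bounded using Lemma \ref{lem:BDF2-Consistency-Error} for $\xi$ and a standard spatial truncation estimate for $\eta$, giving $\sum_k\|P^k\|\,\|e^k\|$-type contributions of size $O(\tau^2+h^2)$.

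The heart of the argument is controlling the convolved nonlinear term $\sum_{k,l}^{n,k}\theta_{k-l}^{(k)}\langle f(\nabla_h U^l)-f(\nabla_h u^l),\nabla_h e^k\rangle$. First I would expand the difference $f(\nabla_h U)-f(\nabla_h u)$ in the error: since $f(\bs v)=(|\bs v|^2-1)\bs v$ is cubic, this difference is a polynomial in $\nabla_h U$, $\nabla_h u$ with $\nabla_h e$ appearing linearly in each monomial, schematically of the form $(|\nabla_h U|^2+|\nabla_h u|^2+\nabla_h U\cdot\nabla_h u-1)\nabla_h e$ plus terms like $(\nabla_h U\cdot\nabla_h e)\nabla_h U$ etc. The terms that are linear in $\nabla_h e$ with bounded coefficients (using $C_1$ for the exact solution and $C_0$ from Lemma \ref{bound-numerical-solution} for the numerical solution) are absorbed via Lemma \ref{lem: DOC property2} with $\bs v^k=\nabla_h e^k$: this produces $\epsilon\sum_k\tau_k\|\nabla_h e^k\|^2$ — which is then dominated by the nonnegative biharmonic DOC term together with the embedding inequality \eqref{embedding inequality}, possibly at the cost of a $\sum_k\tau_k\|e^k\|^2$ term feeding the Grönwall argument — plus $\frac{C}{\epsilon}\sum_k\tau_k\|\nabla_h e^k\|^2$ or $\|e^k\|^2$ on the right to which Lemma \ref{lem: Gronwall inequality} applies. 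The genuinely delicate piece is the term where $\nabla_h e$ enters quadratically through products like $|\nabla_h e|^2\nabla_h u$, for which one needs Lemma \ref{nonlinear-embedding inequality}: writing it as $\langle u^{l}$-type factor times $\bs z^l$ against $\bs w^k\rangle$ with $\bs z^k=\nabla_h e^k$ and $\bs w^k=\nabla_h e^k$, invoke the $l^3$ bound $C_u=\max_j\|\nabla_h u^j\|_{l^3}\le K$ (which follows from \eqref{l6boundedness}), yielding $\varepsilon\sum_k\tau_k(\|\nabla_h\nabla_h e^k\|^2+\|\nabla_h e^k\|^2)+\frac{C}{\varepsilon}\sum_k\tau_k\|\nabla_h e^k\|^2$; the first piece is controlled by the biharmonic dissipation and the embedding inequalities of Lemma \ref{gradient-bound}.

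Putting the pieces together, after choosing $\epsilon,\varepsilon$ small enough (and $\tau_0$ small enough that the residual $\frac12\|e^n\|^2$ is not eaten), I would arrive at an inequality of the form
\begin{align*}
\|e^n\|^2\le \lambda\sum_{k=1}^{n-1}\tau_k\|e^k\|^2+C\big(\sum_{k=1}^n\|P^k\|\big)^2+C\big(\sum_{k=1}^n\tau_k\|\eta^k\|\big)^2,
\end{align*}
where the consistency sums are $O(\tau^2+h^2)$ by Lemma \ref{lem:BDF2-Consistency-Error}, Lemma \ref{lem: DOC property}(II), and the standard fourth-order-in-space / second-order Taylor estimates on $\xi^k,\eta^k$ (using $C_1$ and smoothness of $u$). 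An application of the discrete Grönwall inequality, Lemma \ref{lem: Gronwall inequality}, then gives $\|e^n\|\le C(\tau^2+h^2)$ with $C$ depending on $T$, $\delta$, $C_0$, $C_1$, $K$, $r_s$ but not on $h$ or $\tau_n$. The main obstacle I anticipate is the bookkeeping to ensure that every occurrence of $\sum_k\tau_k\|\nabla_h e^k\|^2$ and $\sum_k\tau_k\|\Delta_h e^k\|^2$ generated by Lemmas \ref{lem: DOC property2} and \ref{nonlinear-embedding inequality} can be absorbed — this requires that the positive-definite biharmonic DOC contribution genuinely dominates, which in turn is where the lower bound in Lemma \ref{lem:inequality of the kernel} (the constant $\mathfrak{m}_1/\mathfrak{m}_2$) and the step-ratio restriction $r_k\le r_s$ are essential; getting those constants to line up, rather than any single estimate, is the crux.
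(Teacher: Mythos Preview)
Your overall strategy matches the paper's: convolve the error equation with the DOC kernels, use \eqref{eq: orthogonal equality for BDF2} to collapse the time term, take the inner product with $e^k$, extract a quantitative lower bound $\frac{\delta\mathfrak{m}_1}{2\mathfrak{m}_2}\sum_k\tau_k\|\Delta_he^k\|^2$ from the biharmonic part via Lemma~\ref{lem:inequality of the kernel}, bound the nonlinear convolution by Lemmas~\ref{lem: DOC property2}--\ref{nonlinear-embedding inequality}, absorb the resulting $\|\Delta_he^k\|^2$ terms, and finish with Lemma~\ref{lem: Gronwall inequality}.

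There is one genuine confusion. You first (correctly) say the expansion of $f(\nabla_hU)-f(\nabla_hu)$ is linear in $\nabla_he$, but then assert that ``the genuinely delicate piece is the term where $\nabla_h e$ enters quadratically through products like $|\nabla_h e|^2\nabla_h u$.'' No such term appears. The paper's decomposition \eqref{nonlinear-term},
\[
f(\nabla_hU)-f(\nabla_hu)=|\nabla_hU|^2\nabla_he+(\nabla_hU\cdot\nabla_he+\nabla_hu\cdot\nabla_he)\nabla_hu-\nabla_he,
\]
is exactly linear in $\nabla_he$, with the coefficients $|\nabla_hU|^2$, $\nabla_hU$, $\nabla_hu$ all a priori bounded (by $C_1$ for the exact solution and Lemma~\ref{bound-numerical-solution} for the numerical one). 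Consequently your plan to invoke Lemma~\ref{nonlinear-embedding inequality} on a quadratic-in-$\nabla_he$ term is aimed at a nonexistent target. In the paper, Lemma~\ref{nonlinear-embedding inequality} is applied to the \emph{first} linear term, with the scalar $u^l:=|\nabla_hU^l|^2$ (bounded in $l^3$) and $\bs z^l=\bs w^k=\nabla_he$; this is what produces the $\|\Delta_he^k\|^2$ contribution to be absorbed. The second and third terms are handled by Lemma~\ref{lem: DOC property2} followed by the $l^4$ and $l^6$ embeddings of Lemma~\ref{gradient-bound} on $\nabla_he$ (not on $\nabla_hu$), which is where \eqref{l_4embedding inequality}--\eqref{l_6embedding inequality} and the bound \eqref{l6boundedness} actually enter. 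Once you correct this, the absorption and Gr\"onwall steps go through as you describe; note also that the paper reaches an inequality with $\sum_{k=1}^n\tau_k\|e^k\|^2$ (not $n-1$) and uses the standard ``choose $n_0$ maximizing $\|e^k\|$'' trick before applying Lemma~\ref{lem: Gronwall inequality}.
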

\begin{proof} Replacing $n$ by $l$ in (\ref{error scheme}), multiplying both sides of (\ref{error scheme})
 by the DOC kernels $\theta_{k-l}^{(k)}$ and summing $l$ from 1 to $k$, then it yields
\begin{align}
\sum_{l=1}^k\theta_{k-l}^{(k)}D_2e_h^l+\delta\sum_{l=1}^k\theta_{k-l}^{(k)}\Delta_h^2 e_h^l
-\sum_{l=1}^k\theta_{k-l}^{(k)}\nabla_h\cdot\brab{f(\nabla_hU_h^l)-f(\nabla_hu_h^l)}
=\sum\limits_{l=1}^k\theta_{k-l}^{(k)}(\xi_h^l+\eta_h^l).\label{Error-Equation-DOC}
\end{align}
Taking the inner product of (\ref{Error-Equation-DOC}) with $e^k$ and summing $k$ from 1 to $n$, we get
 \begin{align}\label{inner error equality}
 &\sum_{k,l}^{n,k}\langle \theta_{k-l}^{(k)}D_2e^l, e^k\rangle+\delta\sum_{k,l}^{n,k}\theta_{k-l}^{(k)}\langle\Delta_h^2e^l, e^k\rangle+\sum_{k,l}^{n,k}\theta_{k-l}^{(k)}\langle f(\nabla_h U^l)-f(\nabla_h u^l),\nabla_h e^k\rangle\nonumber\\
 =&\sum_{k,l}^{n,k}\theta_{k-l}^{(k)}\langle\xi^l+\eta^l, e^k\rangle.
\end{align}

Noticing equality \eqref{eq: orthogonal equality for BDF2} dealing with the BDF2 discretization, we have the equality $\sum_{l=1}^k\theta_{k-l}^{(k)}D_2e_h^l=\diff  e_h^k.$
Making use of the equality $a(a-b)=\frac{1}{2}[a^2-b^2+(a-b)^2],$ the following result holds for the first term on the left hand side of \eqref{inner error equality}

\begin{align}
\sum_{k=1}^n\langle \nabla_\tau e^k, e^k\rangle=\frac{1}{2}(\|e^n\|^2-\|e^0\|^2)+\frac{1}{2}\sum_{k=1}^n\|\diff e^k\|^2.\label{BDF2-term-inner}
\end{align}

By using the summation by parts and Lemma \ref{lem:inequality of the kernel}, the diffusion term yields
\begin{align}
\sum_{k,l}^{n,k}\theta_{k-l}^{(k)}\langle\Delta_h^2e^l, e^k\rangle=\sum_{k,l}^{n,k}\theta_{k-l}^{(k)}\langle\Delta_he^l,\Delta_he^k\rangle
\ge\frac{\mathfrak{m}_1}{2\mathfrak{m}_2}\sum_{k=1}^n\tau_k\|\Delta_he^k\|^2.\label{equality2}
\end{align}

For the nonlinear term, using the identity
\begin{align}
f(\nabla_hU_h^l)-f(\nabla_hu_h^l)&=|\nabla_hU_h^l|^2\nabla_he_h^l+(|\nabla_hU_h^l|^2-|\nabla_hu_h^l|^2)\nabla_hu_h^l-\nabla_he_h^l\nonumber\\
&=|\nabla_hU_h^l|^2\nabla_he_h^l+(\nabla_hU_h^l\cdot\nabla_he_h^l+\nabla_he_h^l\cdot\nabla_hu_h^l)\nabla_hu_h^l-\nabla_he_h^l,\label{nonlinear-term}
\end{align}
then, it follows that
\begin{align}
&\sum_{k,l}^{n,k}\theta_{k-l}^{(k)}\langle f(\nabla_h U^l)-f(\nabla_h u^l),\nabla_h e^k\rangle\nonumber\\
=&\sum_{k,l}^{n,k}\theta_{k-l}^{(k)}\langle|\nabla_hU^l|^2\nabla_he^l, \nabla_he^k\rangle\nonumber
+\sum_{k,l}^{n,k}\theta_{k-l}^{(k)}\langle(\nabla_hU^l\cdot\nabla_he^l)\nabla_hu^l, \nabla_h e^k\rangle\\
&+\sum_{k,l}^{n,k}\theta_{k-l}^{(k)}\langle(\nabla_hu^l\cdot\nabla_he^l)\nabla_hu^l, \nabla_h e^k\rangle-\sum_{k,l}^{n,k}\theta_{k-l}^{(k)}\langle\nabla_he^l, \nabla_h e^k\rangle.\label{nonlinear-inequality}
\end{align}
Next, we estimate each term on the right hand in the above equality.
For the first term on the right hand in (\ref{nonlinear-inequality}), Noticing $\||\nabla_hU^l|^2\|_{l^3}\le C_1^4|\Omega|^{\frac{2}{3}}:=C_2,$ and with the help of Lemma \ref{nonlinear-embedding inequality}, we obtain the following inequality for any $\varepsilon_1>0$
\begin{align}\label{inequality1}
\sum_{k,l}^{n,k}\theta_{k-l}^{(k)}\langle|\nabla_hU^l|^2\nabla_he^l, \nabla_h e^k\rangle
\le& \varepsilon_1\sum_{k=1}^{n}\tau_k(\|\Delta_he^k\|^2+\|\nabla_he^k\|^2)+\frac{C_\Omega C_2^2\mathfrak{m}_3}{4\mathfrak{m}_1\varepsilon_1}
\sum_{k=1}^{n}\tau_k\|\nabla_he^k\|^2.
\end{align}
Making direct use of Lemma \ref{lem: DOC property2}, one arrives the following estimate for  the second term on the right hand in (\ref{nonlinear-inequality})
\begin{align}\label{the third term inequality}
&\sum_{k,l}^{n,k}\theta_{k-l}^{(k)}\langle(\nabla_hU^l\cdot\nabla_he^l)\nabla_hu^l, \nabla_h e^k\rangle\nonumber\\
\le&\varepsilon_1\sum_{k=1}^{n}\tau_k\|(\nabla_hU^k\cdot\nabla_he^k)\nabla_h u^k\|^2
+\frac{\mathfrak{m}_3}{4\mathfrak{m}_1\varepsilon_1}\sum_{k=1}^{n}\tau_k\|\nabla_he^k\|^2.
\end{align}
By virtue of Cauchy-Schwarz inequality, Lemma \ref{bound-numerical-solution} and the inequality (\ref{l_4embedding inequality}), the following estimate holds for the first term on the right hand side of the above inequality
\begin{align*}
\|(\nabla_hU^k\cdot\nabla_he^k)\nabla_h u^k\|^2
&\le h^2\sum_{\mathbf{x}_h\in \Omega_h}|\nabla_hU_h^k|^2|\nabla_he_h^k|^2|\nabla_h u_h^k|^2\\
&=C_1^2\|\nabla_hu^k\|_{l^4}^2\cdot\|\nabla_he^k\|_{l^4}^2\\
&\le C_0^2C_1^2K_1^2\|\nabla_he^k\|\Big(2\|\Delta_he^k\|^2+\frac{1}{L^2}\|\nabla_he^k\|^2\Big)^{\frac{1}{2}}\\
&\le C_3(\|\nabla_he^k\|^2+\|\Delta_he^k\|^2), 
\end{align*}
where $C_3=C_0^2C_1^2K_1^2\max\{\frac{1}{2}+\frac{1}{2L^2}, 1\}.$
Then, substituting the above inequality into (\ref{the third term inequality}), it yields the final estimate of the second term on the right hand in (\ref{nonlinear-inequality})
\begin{align}
&\sum_{k,l}^{n,k}\theta_{k-l}^{(k)}\langle(\nabla_hU^l\cdot\nabla_he^l)\nabla_hu^l, \nabla_h e^k\rangle\nonumber\\
\le&C_3\varepsilon_1\sum_{k=1}^{n}\tau_k(\|\nabla_he^k\|^2+\|\Delta_he^k\|^2)
+\frac{\mathfrak{m}_3}{4\mathfrak{m}_1\varepsilon_1}\sum_{k=1}^{n}\tau_k\|\nabla_he^k\|^2. \label{nonlinear-embedding1}
\end{align}
For the third term on the right hand in (\ref{nonlinear-inequality}), by virtue of Lemma \ref{lem: DOC property2}, it is easily obtained that
\begin{align}
&\sum_{k,l}^{n,k}\theta_{k-l}^{(k)}\langle(\nabla_hu^l\cdot\nabla_he^l)\nabla_hu^l, \nabla_h e^k\rangle\nonumber\\
\le&\varepsilon_1\sum_{k=1}^n\tau_k\Big\|(\nabla_hu^k\cdot\nabla_he^k)\nabla_hu^k\Big\|^2
+\frac{\mathfrak{m}_3}{4\mathfrak{m}_1\varepsilon_1}\sum_{k=1}^{n}\tau_k\|\nabla_he^k\|^2.\label{the third inequality}
\end{align}
Making use of Cauchy-Schwarz inequality, it follows from (\ref{l6boundedness}) and (\ref{l_6embedding inequality}) that
\begin{align}
\Big\|(\nabla_hu^k\cdot\nabla_he^k)\nabla_hu^k\Big\|^2\le&h^2\sum_{\mathbf{x}_h\in \Omega_h}|\nabla_hu_h^k|^4|\nabla_he_h^k|^2\nonumber\\
\le&K^4\|\nabla_he^k\|_{l^6}^2\nonumber\\
\le&K^4K_2\|\nabla_he^k\|^{\frac{2}{3}}\Big(16\|\Delta_he^k\|^2+\frac{1}{L^2}\|\nabla_he^k\|^2\Big)^{\frac{2}{3}}\nonumber\\
\le&K^4K_2\Big[\frac{1}{3}\|\nabla_he^k\|^{2}+\frac{2}{3}\Big(16\|\Delta_he^k\|^2+\frac{1}{L^2}\|\nabla_he^k\|^2\Big)\Big]\nonumber\\
\le&C_4\sum_{k=1}^n\tau_k(\|\Delta_he^k\|^2+\|\nabla_he^k\|^2),\label{nonlinear-term3}
\end{align}
where $C_4=\max\{\frac{32}{3},\frac{1}{3}+\frac{2}{3L^2}\}.$
By inserting (\ref{nonlinear-term3}) into (\ref{the third inequality}), one gets the final estimate of the third term on the right hand in (\ref{nonlinear-inequality})
\begin{align}
&\sum_{k,l}^{n,k}\theta_{k-l}^{(k)}\langle(\nabla_hu^l\cdot\nabla_he^l)\nabla_hu^l, \nabla_h e^k\rangle\nonumber\\
\le&C_4\varepsilon_1\sum_{k=1}^n\tau_k(\|\Delta_he^k\|^2+\|\nabla_he^k\|^2)
+\frac{\mathfrak{m}_3}{4\mathfrak{m}_1\varepsilon_1}\sum_{k=1}^{n}\tau_k\|\nabla_he^k\|^2. \label{nonlinear-embedding11}
\end{align}
Applying Lemma \ref{lem:inequality of the kernel}, it yields the result for the fourth term on the right hand in (\ref{nonlinear-inequality})
\begin{align}
\absB{ -\sum_{k,l}^{n,k}\theta_{k-l}^{(k)}\langle\nabla_he^l, \nabla_h e^k\rangle}\le \frac{\mathfrak{m}_2}{2}\sum_{k=1}^n\tau_k\|\nabla_he^k\|^2.\label{first-estimate}
\end{align}
Substituting (\ref{inequality1}), (\ref{nonlinear-embedding1}), (\ref{nonlinear-embedding11})  and (\ref{first-estimate}) into (\ref{nonlinear-inequality}),
 one has an estimate of the nonlinear term
\begin{align}\label{nonlinear-term-estimate}
&\sum_{k,l}^{n,k}\theta_{k-l}^{(k)}\langle f(\nabla_h U^l)-f(\nabla_h u^l),\nabla_h e^k\rangle\nonumber\\
\le&~ C_5\varepsilon_1\sum_{k=1}^{n}\tau_k\|\Delta_he^k\|^2
+\Big(\frac{C_6}{\varepsilon_1}+C_5\varepsilon_1+\frac{\mathfrak{m}_3}{2}\Big)\sum_{k=1}^{n}\tau_k\|\nabla_he^k\|^2,
\end{align}
where $C_5=1+C_3+C_4,$ $C_{6}=\frac{\mathfrak{m}_3}{4\mathfrak{m}_1}(C_\Omega C_2^2+2).$

Summing up (\ref{BDF2-term-inner}), (\ref{equality2}) and (\ref{nonlinear-term-estimate}), then it follows from (\ref{inner error equality}) that
\begin{align}
&\frac{1}{2}\|e^n\|^2+\frac{\delta\mathfrak{m}_1}{2\mathfrak{m}_2}\sum_{k=1}^n\tau_k\|\Delta_he^k\|^2\nonumber\\
\le&\frac{1}{2}\|e^0\|^2+C_5\varepsilon_1\sum_{k=1}^{n}\tau_k\|\Delta_he^k\|^2
+\Big(\frac{C_6}{\varepsilon_1}+C_5\varepsilon_1+\frac{\mathfrak{m}_3}{2}\Big)\sum_{k=1}^{n}\tau_k\|\nabla_he^k\|^2
\nonumber\\
&+\sum_{k,l}^{n,k}\theta_{k-l}^{(k)}\langle\xi^l+\eta^l, e^k\rangle\nonumber\\
\le&\frac{1}{2}\|e^0\|^2+C_5\varepsilon_1\sum_{k=1}^{n}\tau_k\|\Delta_he^k\|^2
+\Big(\frac{C_6}{\varepsilon_1}+C_5\varepsilon_1+\frac{\mathfrak{m}_3}{2}\Big)\sum_{k=1}^{n}\tau_k\|\Delta_he^k\|\|e^k\|
\nonumber\\
&+\sum_{k,l}^{n,k}\theta_{k-l}^{(k)}\langle\xi^l+\eta^l, e^k\rangle\nonumber\\
\le&\frac{1}{2}\|e^0\|^2+\Big(C_5\varepsilon_1+\Big(\frac{C_6}{\varepsilon_1}+C_5\varepsilon_1+\frac{\mathfrak{m}_3}{2}\Big)\varepsilon_2\Big)
\sum_{k=1}^{n}\tau_k\|\Delta_he^k\|^2+\sum_{k=1}^n\|e^k\|(\|P^k\|+\|Q^k\|)\nonumber\\
&+\frac{1}{4\varepsilon_2}\Big(\frac{C_6}{\varepsilon_1}
+C_5\varepsilon_1+\frac{\mathfrak{m}_3}{2}\Big)\sum_{k=1}^{n}\tau_k\|e^k\|^2 ~~{\rm {for~any}~ \varepsilon_2>0}, \label{inequality9}
\end{align}
where we have used embedding inequality (\ref{embedding inequality}) and $P_h^k=\sum\limits_{l=1}^k\theta_{k-l}^{(k)}\xi_h^l$ and $Q_h^k=\sum\limits_{l=1}^k\theta_{k-l}^{(k)}\eta_h^l.$
Taking $\varepsilon_1=\frac{\delta\mathfrak{\mathfrak{m}_1}}{4C_5\mathfrak{m}_2}$
and $\varepsilon_2=\delta^2\mathfrak{m}_1^2/\Big(16\mathfrak{m}_2^2C_5C_6+\delta^2\mathfrak{m}_1^2+2\delta\mathfrak{m}_1\mathfrak{m}_2\mathfrak{m}_3\Big),$ it follows from (\ref{inequality9})
\begin{align*}
\|e^n\|^2\le&\|e^0\|^2+C_{7}\sum_{k=1}^{n}\tau_k\|e^k\|^2
+2\sum_{k=1}^n\|e^k\|(\|P^k\|+\|Q^k\|),
\end{align*}
where $C_{7}=\frac{1}{2\varepsilon_2}\Big(\frac{C_6}{\varepsilon_1}
+C_5\varepsilon_1+\frac{\mathfrak{m}_3}{2}\Big).$
Choosing a proper integer $n_0\,(0\le n_0\le n)$ such that $\|e^{n_0}\|=\max\limits_{1\le k\le n}\|e^k\|$ and then taking $n=n_0$ in the above inequality,
it yields
\begin{align*}
\|e^{n_0}\|^2\le\|e^0\|\cdot\|e^{n_0}\|+ C_{7}\sum_{k=1}^{n_0}\tau_k\|e^k\|\cdot\|e^{n_0}\|
+2\sum_{k=1}^{n_0}(\|P^k\|+\|Q^k\|)\cdot\|e^{n_0}\|,
\end{align*}
which leads to
 \begin{align*}
\|e^n\|&\le\|e^{n_0}\|\le\|e^0\|+C_{7}\sum_{k=1}^{n}\tau_k\|e^k\|
+2\sum_{k=1}^{n}(\|P^k\|+\|Q^k\|).
\end{align*}
When $\tau_n\le \frac{1}{2C_{7}}:=\tau_0,$ Lemma \ref{lem: Gronwall inequality} implies that
\begin{align}
\|e^n\|\le 2\exp(C_{7}t_{n-1})\Big(\|e^0\|+2\sum_{k=1}^{n}(\|P^k\|+\|Q^k\|)\Big).\label{error-estimate1}
\end{align}
By virtue of Lemma \ref{lem: DOC property} and Lemma \ref{lem:BDF2-Consistency-Error}, the desired estimate is obtained from (\ref{error-estimate1}). The proof ends.
\end{proof}

\section{Numerical experiments}
\setcounter{equation}{0}
In this section, we provide two numerical examples to verify the convergence rate in time and the energy dissipation. Only simple iteration is used to solve the nonlinear algebra equations at each time level with the tolerance as $10^{-12}$ and the solution at previous level is chosen as the initial guess. We test the convergence rate on the graded meshes for the first example.
\begin{example}
Consider the MBE model $u_t+\delta\Delta^2u+f(\nabla_hu)=g(\mathbf{x},t),\mathbf{x}\in\Omega=(0,2\pi)^2$, $0< t\le 1$ with $\delta=0.1.$
We take the function $g(\mathbf{x},t)$ such that it has an exact solution $u(\mathbf{x},t)=\cos t\sin x\sin y.$
\end{example}

The example is to demonstrate the time accuracy of the variable-step BDF2 scheme (\ref{eq: fully BDF2 implicit scheme}) on the graded meshes.
Let $r$ be a positive integer, $ t_k=T\left(\frac{k}{N}\right)^r$ and $\tau_k=t_k-t_{k-1}.  $ Denote the discrete $L^2$ norm error $e(N):=\|U^N-u^N\|$ and the order of convergence in time direction is
defined by $\mathrm{Order}={\rm log_2}(e(N)/e(2N)).$
The number of the spatial grid points are fixed as $M=3000.$ We list the numerical results,
including the $L^2$ norm error $e(N),$ the order of convergence in time direction. From Table \ref{PFC-BDF2-Time-Error}, it is easily verified that the variable-step
BDF2 scheme (\ref{eq: fully BDF2 implicit scheme}) achieves second-order accuracy in time as proved in Theorem \ref{th3.2}.

\begin{table}[htb!]
\begin{center}
\caption{Errors and convergence rate of the variable-step BDF2 scheme \eqref{eq: fully BDF2 implicit scheme}}\label{PFC-BDF2-Time-Error} \vspace*{0.3pt}
\def\temptablewidth{0.5\textwidth}
{\rule{\temptablewidth}{0.5pt}}
\begin{tabular*}{\temptablewidth}{@{\extracolsep{\fill}}ccccccc}
  $N$     &$e(N)$     &Order  \\
  \midrule
  40    &1.03e-04	 &--	 \\
  80    &2.82e-05	 &1.87	\\
  160	&6.74e-06	 &2.06  \\
  320	&1.60e-06    &2.07
\end{tabular*}
{\rule{\temptablewidth}{0.5pt}}
\end{center}
\end{table}	
Next, adaptive time-stepping strategy, which is designed to capture the multi-scale behavior of the gradient flow, is utilized to compute the MBE model (\ref{cont: MBE model}) in the implementation of the variable-step BDF2 scheme. We adopt the commonly used time adaptive strategy of [\cite{Gomez-Hughes} Algorithm 1 ] to get the variation of the time steps. In detail, the time-step $\tau_{ada}$ is updated adaptively using the current step information $\tau_{cur}$ by the formula
$\tau_{ada}(e,\tau_{cur})=\min\{\sqrt{tol/e}\rho\tau_{cur}\},$
where $\rho$ is a default safety coefficient, $tol$ is a reference tolerance and
$e$ is the relative error at each time level. Moreover, $\tau_{\max}$ and $\tau_{\min}$ are  predetermined maximum and minimum time steps respectively.
In the following simulation, we choose the safety coefficient as $\rho=0.9$, the reference tolerance $tol=10^{-3}$, the maximum time step $\tau_{\max}=0.1$ and the minimum time step $\tau_{\min}=10^{-4}$.
\begin{algorithm}
\caption{Adaptive time-stepping strategy}
\label{Adaptive-Time-Strategy}
\begin{algorithmic}[1]
\Require{Set $\tau_1:=\tau_{\min}.$ Given $u^{n}$ and time step $\tau_{n}$}
\State Compute $u^{n+1}$ by using BDF2 scheme with time step $\tau_{n}$.
\State Calculate $e_{n+1}=\|u^{n+1}-u^{n}\|/\|u^{n+1}\|$.
\If {$e_{n}<tol$ or $\tau_n\le \tau_{\min}$}
 \If {$e_{n}<tol$}
\State Update time-step size $\tau_{n+1}\leftarrow\min\{\max\{\tau_{\min},\tau_{ada}\},2.414\tau_n,\tau_{\max}\}$.
 \Else
 \State Update time-step size $\tau_{n+1}\leftarrow \tau_{\min}$.
 \EndIf
\Else
\State Recalculate with time-step size $\tau_{n}\leftarrow\max\{\tau_{\min},\tau_{ada}\}$.
\State Goto 1
\EndIf
\end{algorithmic}
\end{algorithm}

In addition, we use the roughness measure function $R(t)$ as defined in \cite{2010+Wang+DCD}:
$$R(t)=\sqrt{\frac{1}{|\Omega|}\int_\Omega\big(u(x,y,t)-\overline{u}(x,y,t)\big)^2~ \rm d\mathbf{x}},$$
where $\overline{u}(x,y,t)=\frac{1}{|\Omega|}\int_\Omega u(x,y,t)\rm d\mathbf{x}.$ It will be tested also in the next example.

\begin{example}
Take $\Omega=(0,2\pi)^2.$  Consider the problem (\ref{cont: MBE model}) with the initial condition as follows
\begin{align*}
u(x,y,0)=0.1(\sin3x\sin2y+\sin5x\sin5y).
\end{align*}
\end{example}
\begin{figure} [htp]
\begin{minipage}{8cm}
\epsfig{file=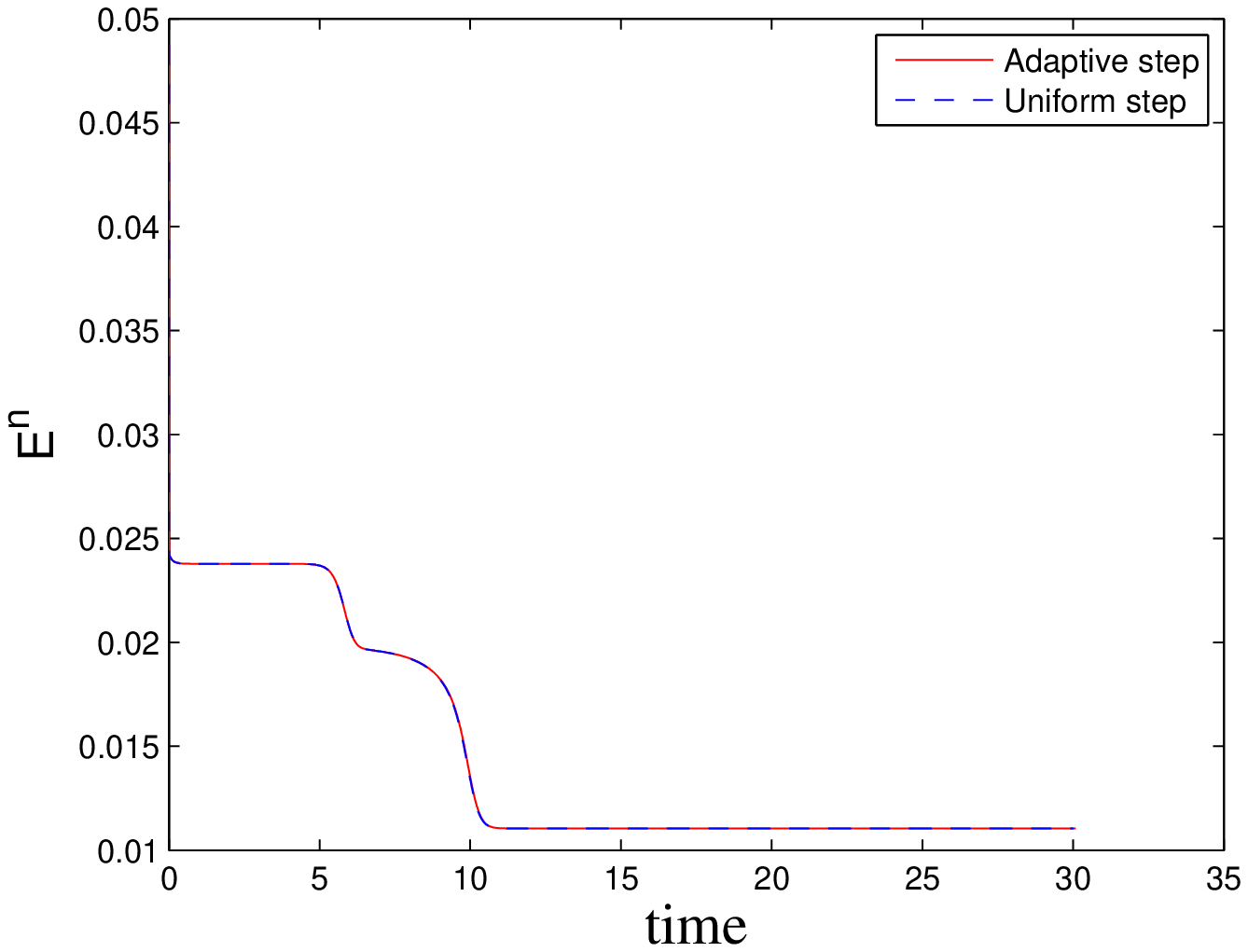, height=5cm}
\end{minipage}
\hfill
\begin{minipage}{8cm}
\epsfig{file=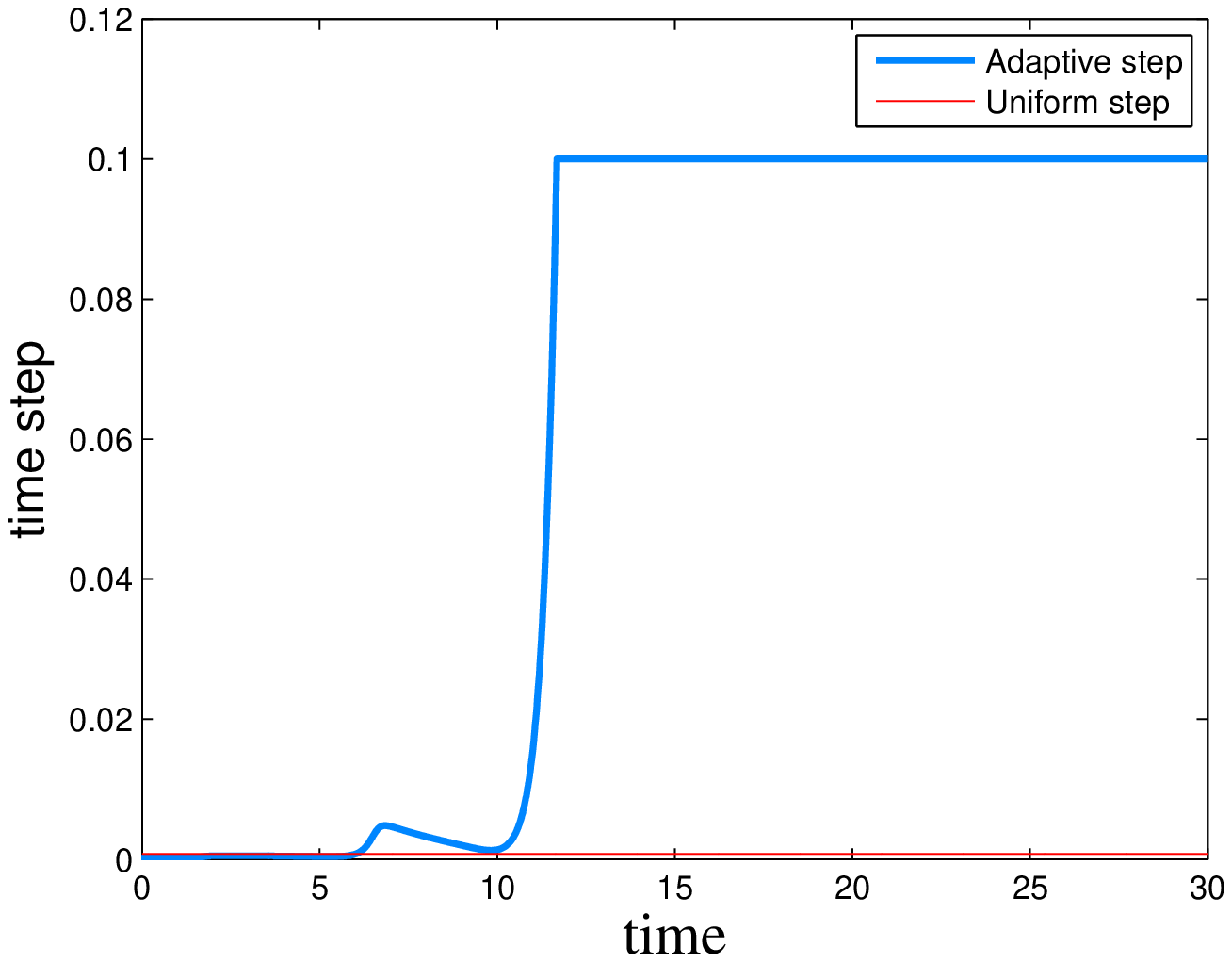, height=5cm}
\end{minipage}
\caption{Comparisons of the energy (left) and the time steps (right) of the BDF2 scheme (\ref{eq: fully BDF2 implicit scheme}) using the fixed time steps and the adaptive time
strategies.}\label{figure-energy1}
\end{figure}
We take the parameter $\delta=0.1$ and a $128\times 128$ uniform mesh
to discrete the spatial domain $\Omega=(0,2\pi)^2$.
In order to make the comparisons, we simulate the MBE model until $T=30$ on the uniform time meshes with the fixed time step $\tau=10^{-3}$ and the adaptive time meshes
(described in Algorithm 1), respectively. In Figure \ref{figure-energy1}, the time evolutions of discrete energies (left) and the corresponding time-step sizes (right) are depicted. From Figure \ref{figure-energy1}, we observe that the discrete energy curve on the adaptive time steps is in accordance with that generated by using a small constant step size. In addition, Figure \ref{figure-energy1} also demonstrates that the adaptive BDF2 scheme (\ref{eq: fully BDF2 implicit scheme}) is more efficient computationally that small time steps are chosen when the energy decays fast while large time steps are automatically selected when the energy dissipates slowly.


\begin{figure} [htp]
\begin{minipage}{8cm}
\epsfig{file=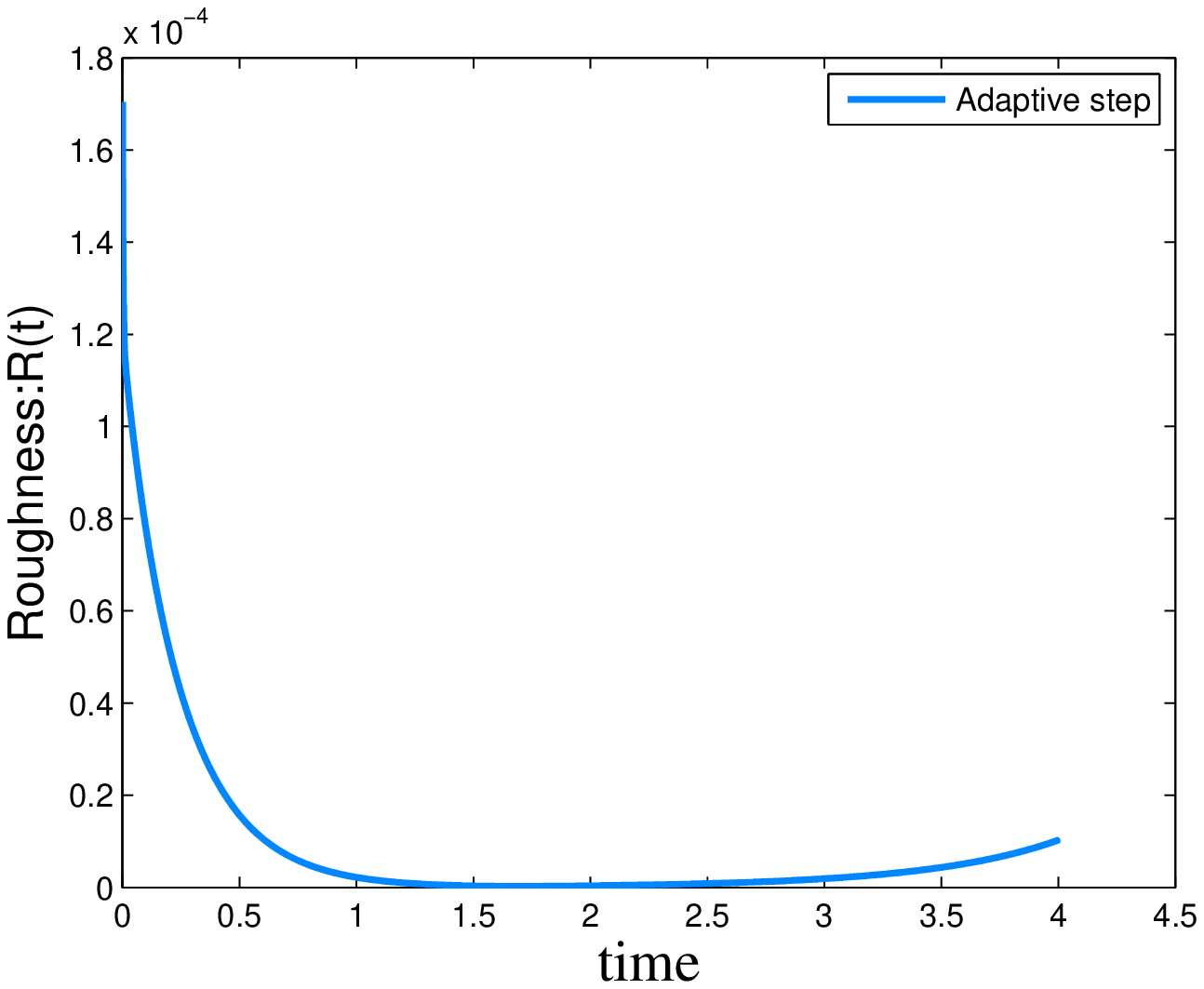, height=5cm}
\end{minipage}
\hfill
\begin{minipage}{8cm}
\epsfig{file=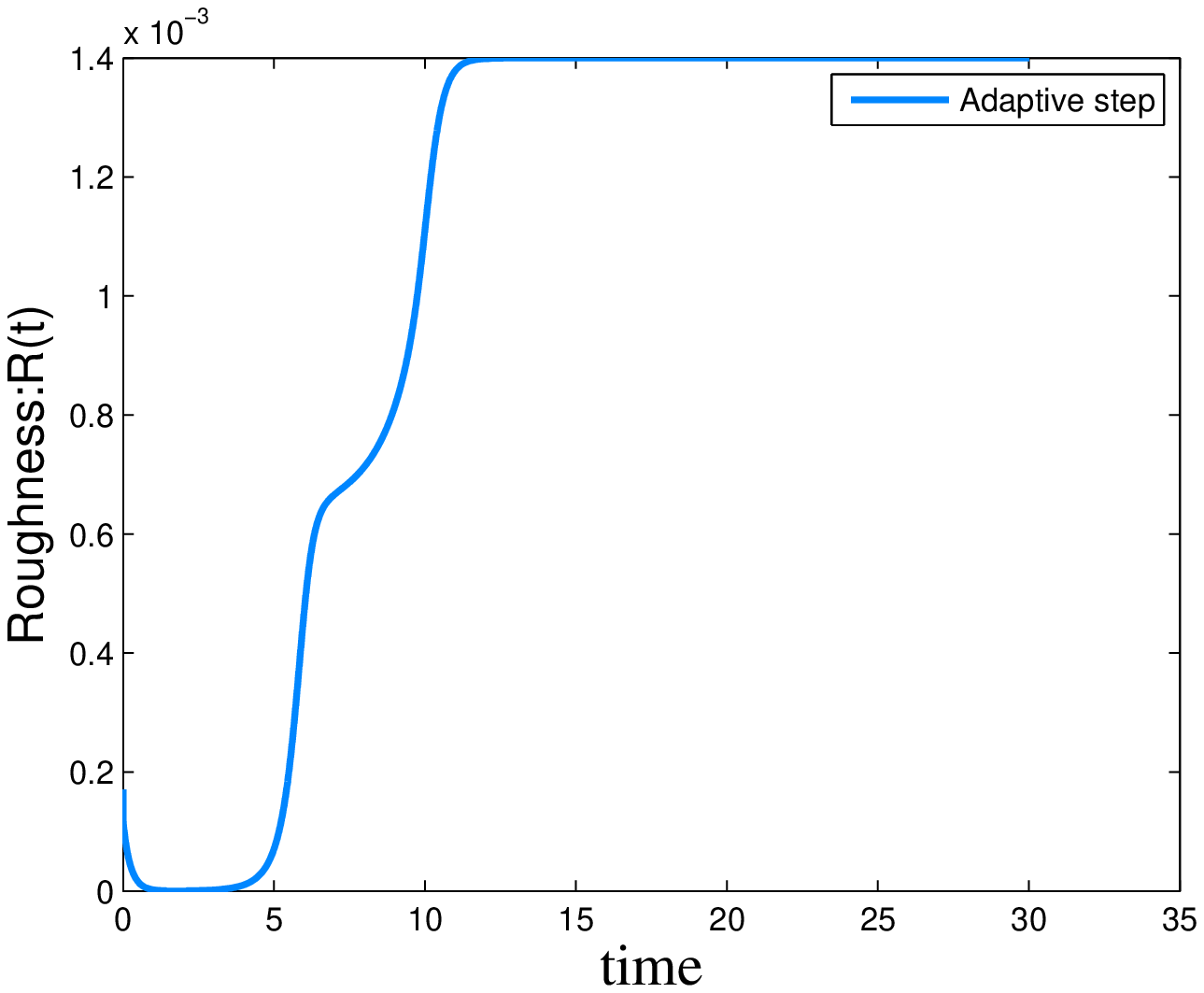, height=5cm}
\end{minipage}
\caption{Roughness evolution for MBE as $t=4$ (left) and $t=30$ (right) .}\label{Roughness}
\end{figure}

In Figure \ref{Roughness}, we plot the time evolution of the roughness measure function $R(t).$ The time evolution curve of $R(t)$ displays that
the deviation decreases for a short period and then keeps increasing until the steady-state.
In Figure \ref{snapshot-figure1}, the snapshots of the numerical solutions $u$ are shown until the steady-state by the adaptive BDF2 scheme (\ref{eq: fully BDF2 implicit scheme}).
\begin{figure} [htp]
\begin{minipage}{4.6cm}
\epsfig{file=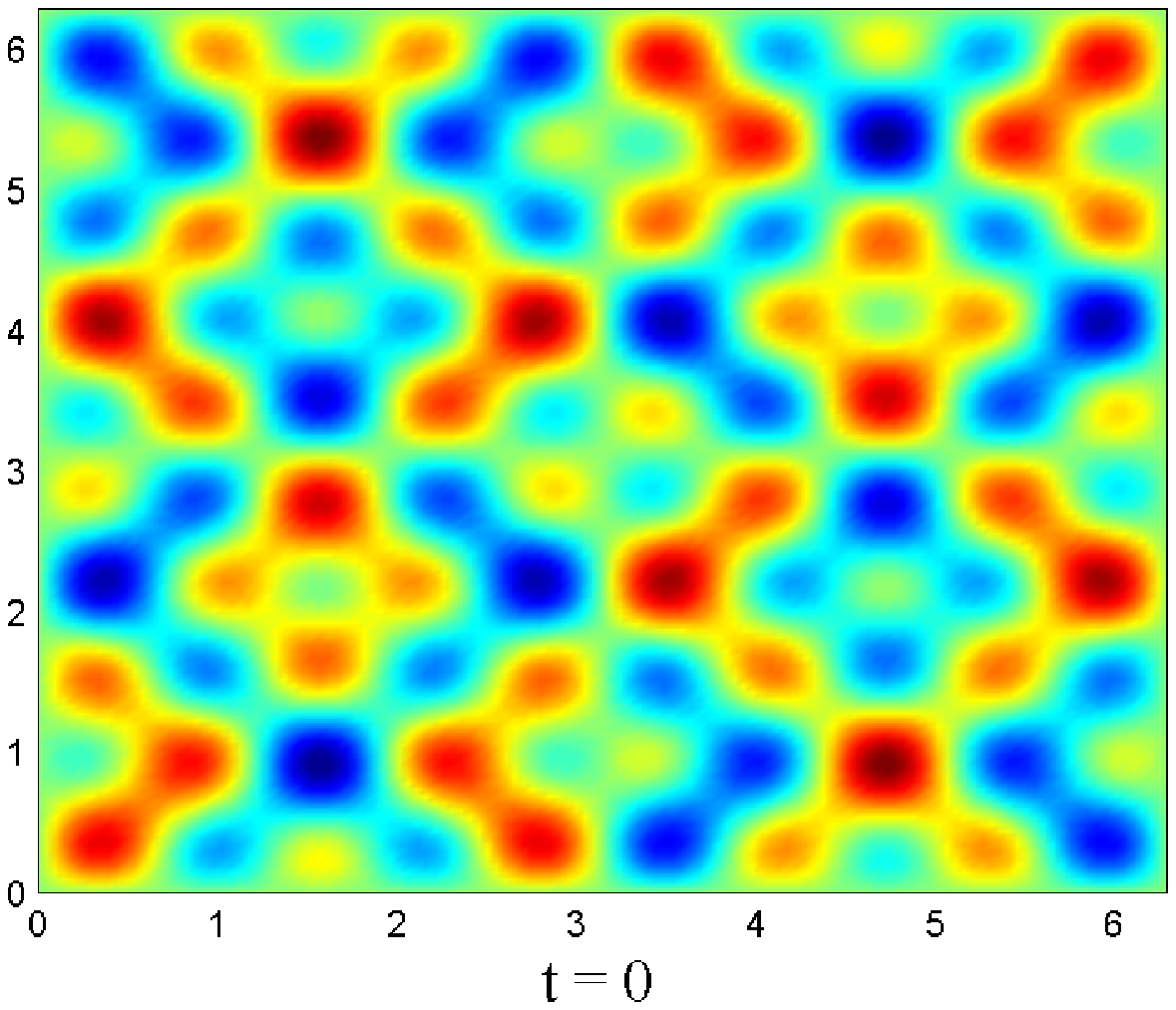, height=3.7cm}
\end{minipage}
\hfill
\begin{minipage}{4.6cm}
\epsfig{file=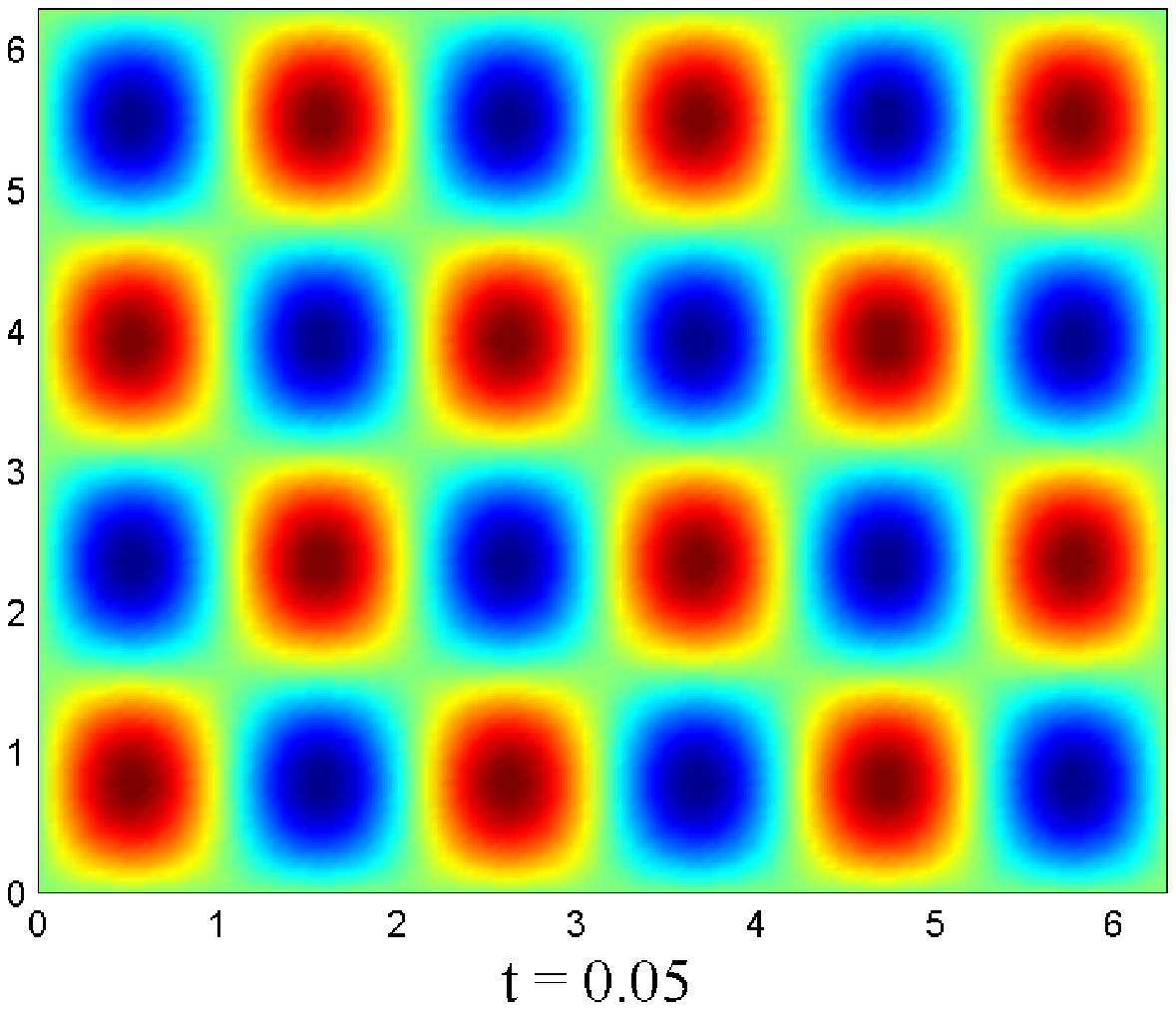, height=3.7cm}
\end{minipage}
\hfill
\begin{minipage}{4.6cm}
\epsfig{file=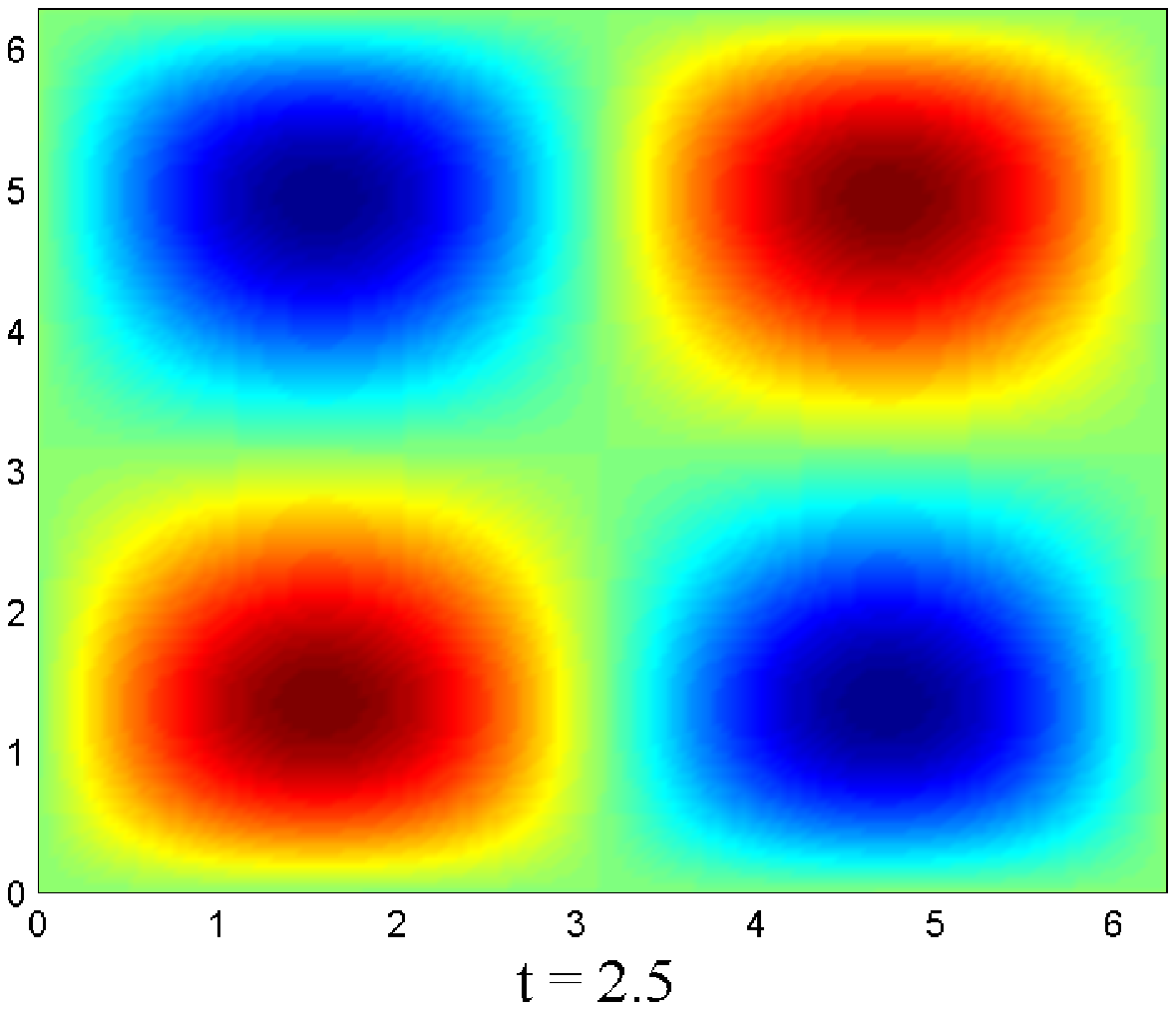, height=3.7cm}
\end{minipage}
\\
\begin{minipage}{4.6cm}
\epsfig{file=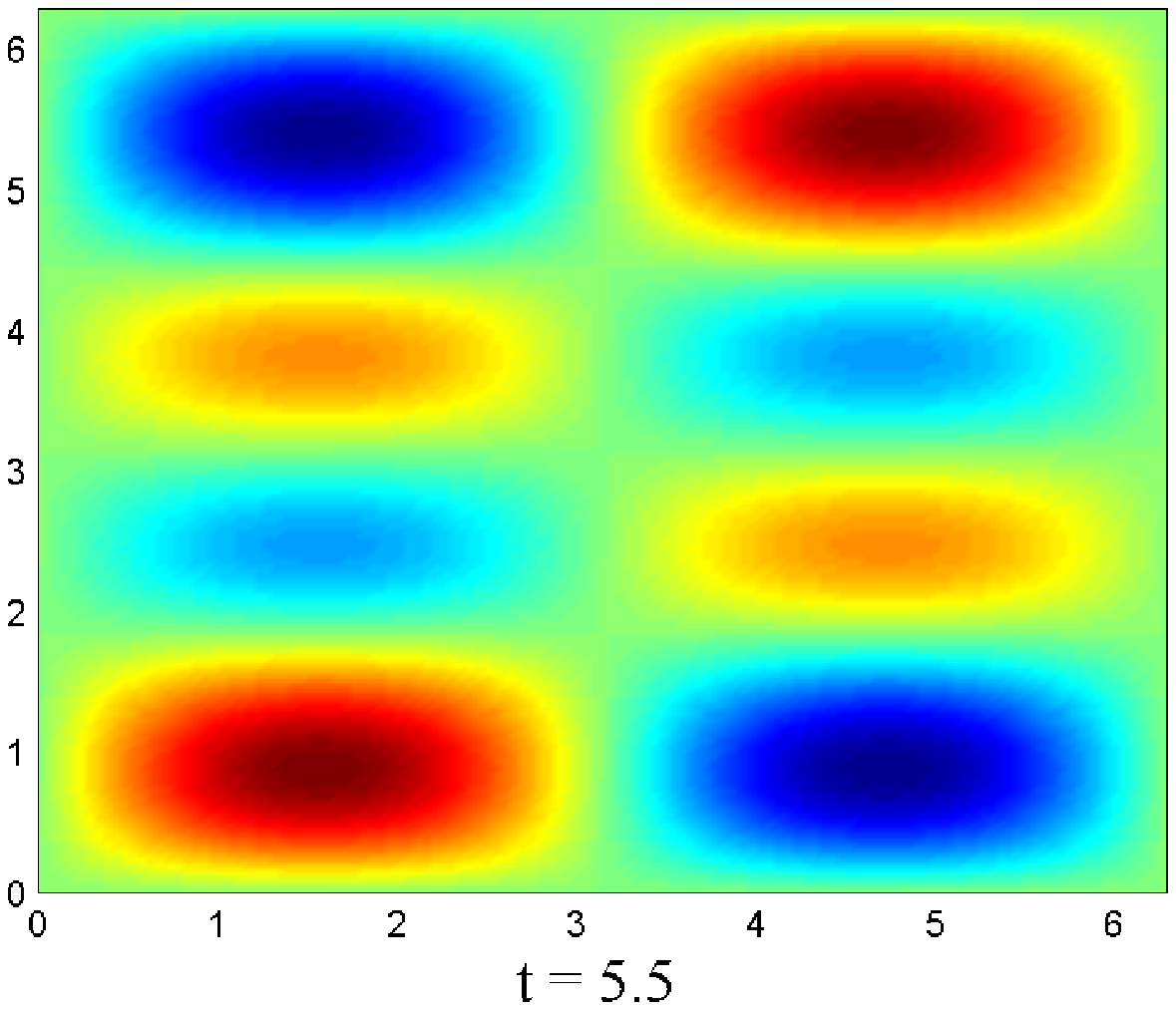, height=3.7cm}
\end{minipage}
\hfill
\begin{minipage}{4.6cm}
\epsfig{file=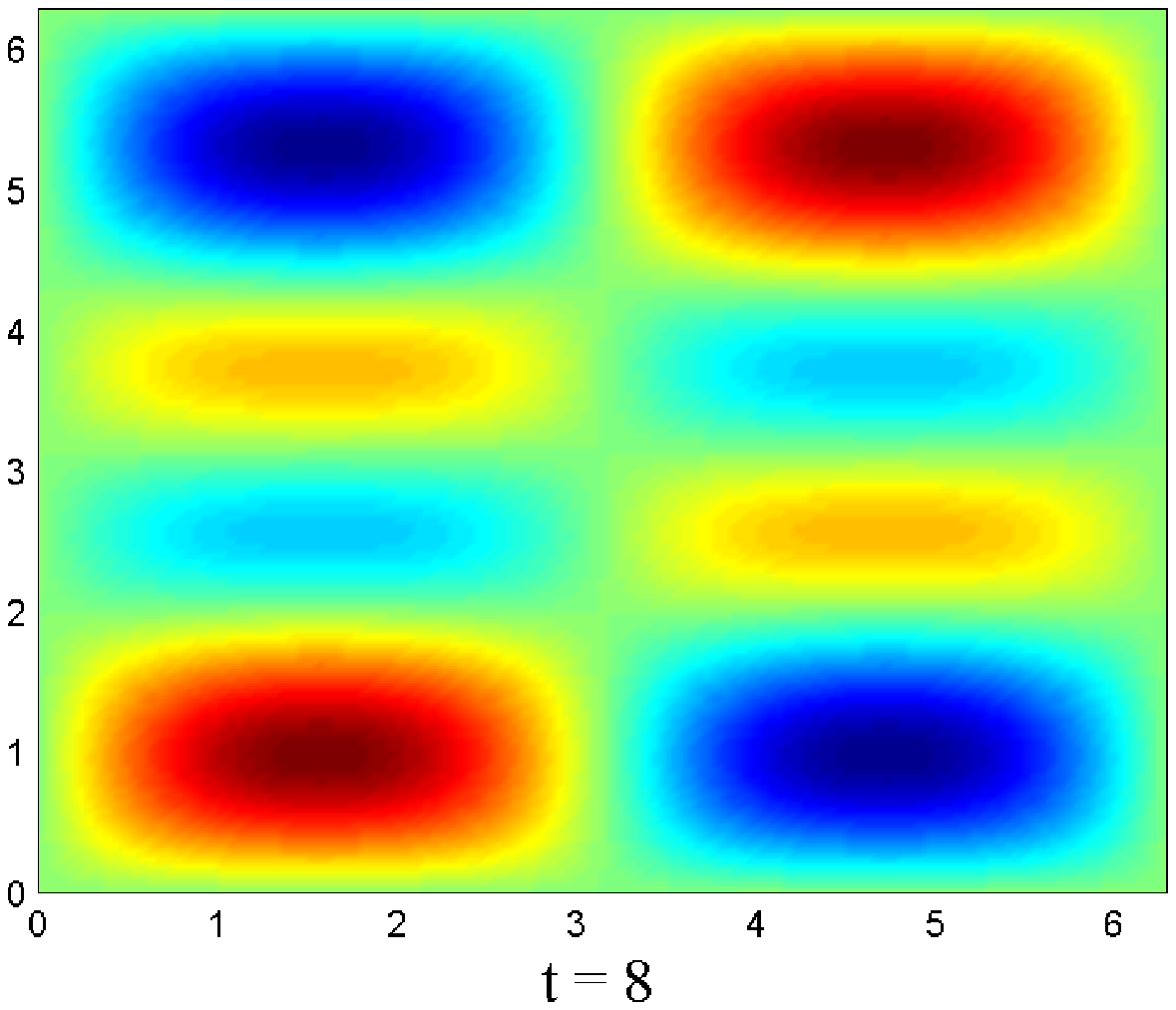, height=3.7cm}
\end{minipage}
\hfill
\begin{minipage}{4.6cm}
\epsfig{file=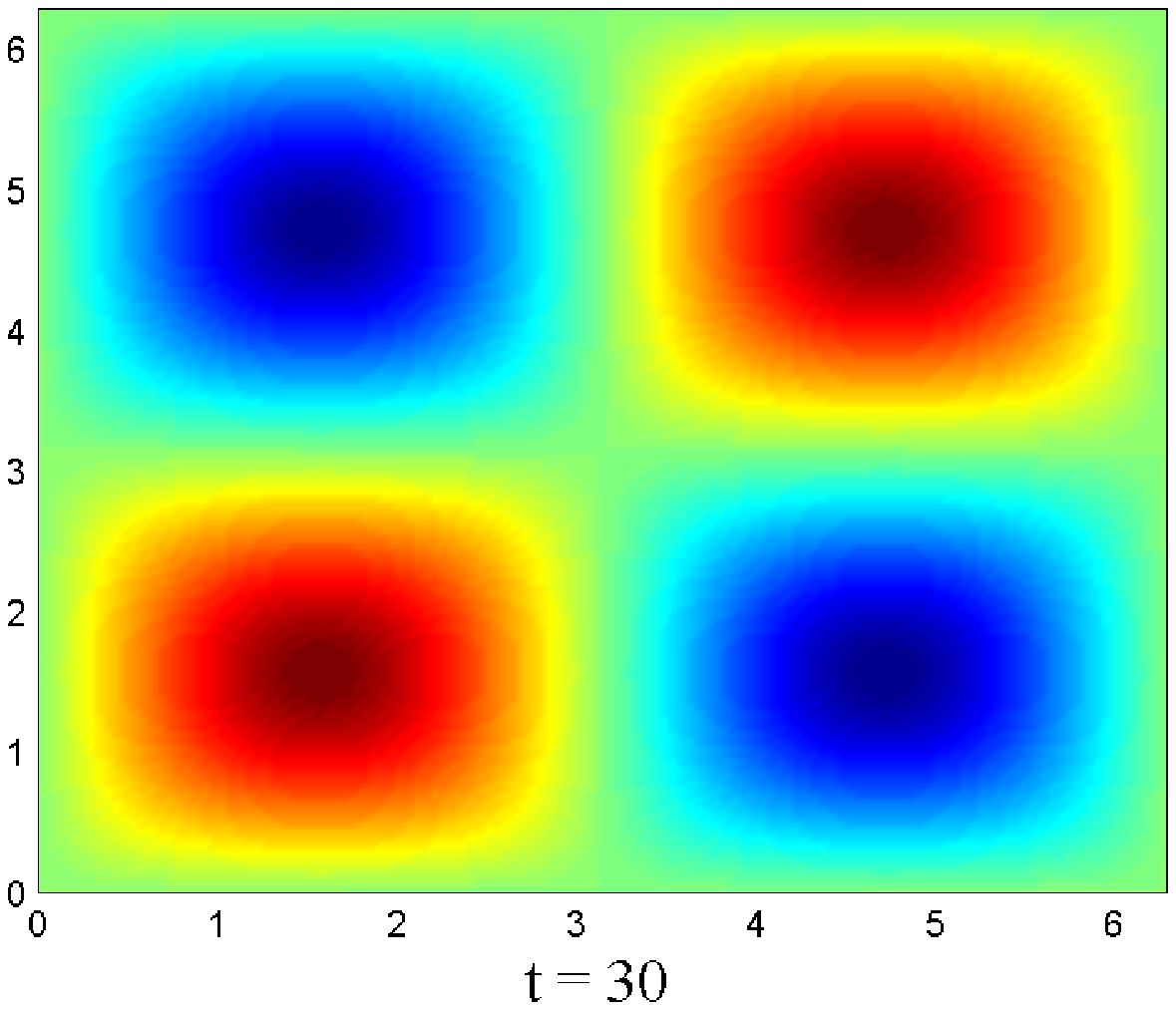, height=3.7cm}
\end{minipage}
\caption{Evolution of the solutions by the BDF2 scheme (\ref{eq: fully BDF2 implicit scheme})
using the adaptive time strategy at $t=0, 0.05, 2.5, 5.5, 8, 30.$ }\label{snapshot-figure1}
\end{figure}

\section*{Acknowledgement}
\noindent We would like to acknowledge support by the National Natural Science Foundation of China (No. 11701229,11701081,11861060), the Jiangsu Provincial Key Laboratory of Networked Collective Intelligence (No. BM2017002), Key Project of Natural Science Foundation of China (No. 61833005) and ZhiShan Youth Scholar Program of SEU,
 China Postdoctoral Science Foundation (No. 2019M651634), High-level Scientific
Research foundation for the introduction of talent of Nanjing Institute of Technology (No. YKL201856).

\section*{Appendix}
\def\theequation{A.\arabic{equation}}
We give the detailed proofs of Lemma \ref{lem:inequality of the kernel} - Lemma \ref{gradient-bound} in the Appendix. Two fundamental properties of the time discretization coefficients are provided in order to show the proofs. We start with the  introduction of the following two matrices
$$\mathbf{B_2}=\left(\begin{matrix}
   b_0^{(1)} &  & & \\
   b_1^{(2)} &b_0^{(2)}&  &  \\
   &  \ddots&\ddots&\\
   & & b_1^{(n)}&b_0^{(n)}
  \end{matrix}\right)\otimes \mathbf{I}_2,~~
\mathbf{\Theta_2}=\left(\begin{matrix}
   \theta_0^{(1)} &  &  & \\
   \theta_1^{(2)} &\theta_0^{(2)}&  &  \\
   \vdots&\ddots&\ddots&\\
  \theta_{n-1}^{(n)} & \ldots& \theta_1^{(n)}&\theta_0^{(n)}
  \end{matrix}\right)\otimes \mathbf{I}_2,$$
where the elements $b_{n-k}^{(n)}$ and $\theta_{n-k}^{(n)}$ are defined by (\ref{eq: BDF2-kernels}) and (\ref{eq: DOC-Kernels}), respectively,
$\mathbf{I}_2$ is $2\times2$ identity matrix and $\otimes$ is tensor product.
By virtue of the discrete orthogonal identity (\ref{eq: orthogonal identity}), it yields
$\mathbf{\Theta_2}=\mathbf{B_2^{-1}}.$
Denote $\mathbf{B}=\mathbf{B_2}+\mathbf{B_2^T}$ and $\mathbf{\Theta}=\mathbf{\Theta}_2+\mathbf{\Theta}_2^T,$ then it follows from Lemma \ref{lem:Conv-Kernels-Positive} and Lemma \ref{lem: DOC property} that
the matrix $\mathbf{B}$ and $\mathbf{\Theta}$ are symmetric and positive definite. By substitution of $\mathbf{\Theta}$ with $\mathbf{B_2},$ we have
\begin{align}
\mathbf{\Theta}:=\mathbf{B}_2^{-1}+(\mathbf{B}_2^{-1})^T=(\mathbf{B}_2^{-1})^T\mathbf{B}\mathbf{B}_2^{-1}.\label{theta-decompose}
\end{align}
In addition, we define a diagonal matrix $\mathbf{\Lambda}_\tau={\rm diag}(\sqrt{\tau_1}, \sqrt{\tau_2}, \cdots, \sqrt{\tau_n})\otimes \mathbf{I}_2$ and denote
$$\widetilde{\mathbf{B}}_2:=\mathbf{\Lambda}_\tau \mathbf{B_2}\mathbf{\Lambda}_\tau=\widetilde{B}_2\otimes\mathbf{I}_2{~~\rm{and}~~
~~\mathbf{\widetilde{\Theta}_2}:=\widetilde{B}_2^{-1}\otimes\mathbf{I}_2,}
$$
where $\widetilde{B}_2=\left(\begin{matrix}
   \tilde{b}_0^{(1)} &  &  & \\
   \tilde{b}_1^{(2)} &\tilde{b}_0^{(2)}&  &  \\
   &\ddots&\ddots&\\
    & & \tilde{b}_1^{(n)}&\tilde{b}_0^{(n)}
  \end{matrix}\right)$
{ and $\widetilde{\Theta}_2:=\widetilde{B}^{-1}_2=\left(\begin{matrix}
   \tilde{\theta}_0^{(1)} &  &  & \\
   \tilde{\theta}_1^{(2)} &\tilde{\theta}_0^{(2)}&  &  \\
  \vdots &\vdots&\ddots&\\
   \tilde{\theta}_{n-1}^{(n)}&\tilde{\theta}_{n-2}^{(n)} & \cdots&\tilde{\theta}_0^{(n)}
  \end{matrix}\right).$}

It is easy to check that $\tilde{b}^{(1)}_0=2,$
$ \tilde{b}_0^{(k)}=\frac{1+2r_k}{1+r_k}~~{\rm and}~~ \tilde{b}_1^{(k)}=-\frac{r_k^{\frac{3}{2}}}{1+r_k},~~2\le k\le n.$
{It follows from Lemma \ref{lem: DOC property} that
\begin{align}
\tilde{\theta}_{k-j}^{(k)}=\frac{1}{\sqrt{\tau_k\tau_j}}\theta_{k-j}^{(k)}=\frac{1+r_j}{1+2r_j}\prod_{i=j+1}^k\frac{r_i^{\frac{3}{2}}}{1+2r_i},
~~1\le j\le k\le n.\label{step-scaled DOC}
\end{align}}
 Define the symmetric matrix $\widetilde{\mathbf{B}}:=\widetilde{\mathbf{B}}_2+\widetilde{\mathbf{B}}_2^T=\Lambda_\tau\mathbf{B}\Lambda_\tau.$
We introduce the vector norm $\il\cdot\il$ by $\il\bs{u}\il=\sqrt{\bs{u}^T\bs{u}}$ and the associated matrix norm
$\il\mathbf{A}\il:=\sqrt{\rho(\mathbf{A^T}\mathbf{A})}.$

{\bf The proof of Lemma \ref{lem:inequality of the kernel}}~~~~{Firstly, we estimate the lower bound of $\lambda_{\min}(\widetilde{\mathbf{B}})$ and the upper bound of $\lambda_{\max}(\widetilde{\mathbf{B}}_2^T\widetilde{\mathbf{B}}_2).$
Denote $\lambda$ be the eigenvalue of the matrix $\widetilde{\mathbf{B}}$. By use of the Gerschgorin's circle theorem, one arrives at
$|\lambda-2\tilde{b}_0^{(k)}|\le |\tilde{b}_1^{(k)}|+|\tilde{b}_1^{(k+1)}|,~~2\le k\le n-1.$
Then it yields
\begin{align}
\lambda_{\min}(\widetilde{\mathbf{B}})
\ge\min_{1\le k\le n}\Big\{\frac{2+4r_k-r_k^{\frac{3}{2}}}{1+r_k}-\frac{r_{k+1}^\frac{3}{2}}{1+r_{k+1}}\Big\}
&\ge\frac{2+2r_s(2-\sqrt{r_s})}{1+r_s}:=\mathfrak{m}_1.\label{lowerbound}
\end{align}

By applying the properties of the Kronecker tensor product $(A\otimes B)^T=A^T\otimes B^T$ and $(A\otimes B)(C\otimes D)=AC\otimes BD,$ we have
$
\widetilde{\mathbf{B}}_2^T\widetilde{\mathbf{B}}_2=(\widetilde{B}_2^T\widetilde{B}_2)\otimes\mathbf{I}_2.
$
Making use of the Gerschgorin's circle theorem, it leads to
\begin{align*}
\lambda_{\max}(\widetilde{B}_2^T\widetilde{B}_2)\le \max_{2\le k\le n}\{\mathfrak{R}(r_k,r_{k+1}), 3+\mathfrak{R}(r_1,r_{2})\}\le 3+\mathfrak{R}(r_s,r_{s}):=\mathfrak{m}_2,
\end{align*}
where $r_1=0$ and the function $\mathfrak{R}(u,v)$ is defined by
$$\mathfrak{R}(u,v)=\frac{(1+2u)(1+2u+u^{\frac{3}{2}})}{(1+u)^2}+\frac{v^{\frac{3}{2}}(1+2v+v^{\frac{3}{2}})}{(1+v)^2},~~0\le u,v\le r_s.$$
Then we obtain
\begin{align}
\lambda_{\max}(\widetilde{\mathbf{B}}_2^T\widetilde{\mathbf{B}}_2)\le\mathfrak{m}_2.\label{upperbound}
\end{align}

Denote $\bs{v}=((\bs{v}^1)^T, (\bs{v}^2)^T, \cdots, (\bs{v}^n)^T)^T.$ 
The inequality (\ref{lowerbound}) implies that the symmetric matrix $\widetilde{\mathbf{B}}$ is positive definite. There exists a non-singular upper
triangular matrix $\widetilde{\mathbf{U}}$ such that $\widetilde{\mathbf{B}}=\widetilde{\mathbf{U}}^T\widetilde{\mathbf{U}}.$
Then we obtain
\begin{align*}
\bs{v}^T \bs {\Theta}\bs{v}=\bs{v}^T(\bs{B}_2^{-1})^T\bs{B}\bs{B}_2^{-1}\bs{v}=\bs{v}^T(\bs{B}_2^{-1})^T\bs{\Lambda}_\tau^{-1}
\bs{\widetilde{B}}\bs{\Lambda}_\tau^{-1}\bs{B}_2^{-1}\bs{v}=\il\widetilde{\mathbf{U}}\bs{\Lambda}_\tau^{-1}\bs{B}_2^{-1}\bs{v}\il^2.
\end{align*}
Consequently, it yields
\begin{align*}
\il\bs{\Lambda}_\tau\bs v\il^2&=\il\bs{\Lambda}_\tau\bs{B}_2\bs{\Lambda}_\tau\widetilde{\mathbf{U}}^{-1}\widetilde{\mathbf{U}}
\bs{\Lambda}_\tau^{-1}\bs{B}_2^{-1}\bs{v}\il^2\\
&\le \il\bs{\widetilde{B}}_2\widetilde{\mathbf{U}}^{-1}\il^2
\il\widetilde{\mathbf{U}}\bs{\Lambda}_\tau^{-1}\bs{B}_2^{-1}\bs{v}\il^2\\
&\le \il\bs{\widetilde{B}}_2\il^2\il\widetilde{\mathbf{U}}^{-1}\il^2\bs{v}^T\bs{\Theta}\bs{v}
\\&=\lambda_{\max}(\bs{\widetilde{B}}_2^T\bs{\widetilde{B}}_2)\lambda_{\max}(\bs{\widetilde{B}}^{-1})\bs{v}^T\bs{\Theta}\bs{v}.
\end{align*}
It follows from (\ref{lowerbound}) and (\ref{upperbound}) that
\begin{align}
 \bs v^T \mathbf{\Theta}\bs v\ge\frac{\mathfrak{m}_1}{\mathfrak{m}_2}.\label{lower-bound}
\end{align}

Let $\widetilde{\Theta}=\widetilde{\Theta}_2+\widetilde{\Theta}_2^T.$ Noticing $0<\frac{x^{3/2}}{1+2x}<m_*:\frac{r_{s}^{3/2}}{1+2r_{s}}<1$ for any
$x\in[0,r_s),$ we have
\begin{align*}
\mathfrak{R}_{n,k}:=\sum_{j=1}^k\widetilde{\theta}_{k-j}^{(k)}+\sum_{j=k}^{n}\theta_{j-k}^{(j)}\le \sum_{j=1}^k m_*^{k-j}+\sum_{j=k}^n m_*^{j-k}<\frac{2}{1-m_*},~~1\le k\le n.
\end{align*}
 Using Gerschgorin's circle theorem, one arrives at $\lambda_{\max}(\widetilde{\Theta})\le \max_{1\le k\le n}\mathfrak{R}_{n,k}<\mathfrak{m}_3:=\frac{2}{1-m_*}.$ Thus it leads to $\bs{w}^T\bs{\widetilde{\Theta}}\bs{w}\le \mathfrak{m}_3\il \bs{w}\il^2 $
for any $\bs{w},$ where $\bs{\widetilde{\Theta}}:=\widetilde{\Theta}\otimes \bs{I}_2.$ By taking $\bs{w}:=\bs{\Lambda}_\tau\bs{v},$ it yields
\begin{align}
\bs v^T\mathbf{\Theta}\bs v\le \mathfrak{m}_3\il \mathbf{\Lambda}_\tau \bs v\il^2.\label{upper-bound}
\end{align}
Combining (\ref{lower-bound}) and(\ref{upper-bound}) and using the fact $\bs v^T \mathbf{\Theta}\bs v=2\sum\limits_{k,l}^{n,k}\theta_{k-l}^{(k)}(\bs v^k)^T\bs v^l,$ it yields the desired result.}

{\bf The proof of Lemma \ref{lem: DOC property2}}~~~~{Let $\bs{w}=((\bs{w}^1)^T, (\bs{w}^2)^T,\cdots, (\bs{w}^n)^T)^T.$
 A similar proof of Lemma A.3 in \cite{2022-Ji+JSC} yields
 \begin{align*}
 \sum_{k,j}^{n,k}\theta_{k-j}^{(k)}({\bs v}^j)^T{\bs w}^k\le \varepsilon \sum_{k,j}^{n,k}\theta_{k-j}^{(k)}({\bs v}^j)^T{\bs v}^k+
 \frac{1}{2\varepsilon}{\bs w}^T{\bs B}^{-1}{\bs w}~ {\rm{for~any~}} \varepsilon>0.
\end{align*}
It is easy to obtain $\bs{B}^{-1}=\bs{\Lambda}_\tau\bs{\widetilde{U}}^{-1}(\bs{\Lambda}_\tau\bs{\widetilde{U}}^{-1})^T.$ It follows that
\begin{align*}
\bs{w}^T\bs{B}^{-1}\bs{w}
&=\il(\bs{\widetilde{U}}^{-1})^T\bs{\Lambda}_\tau \bs{w}\il^2\le \il(\bs{\widetilde{U}}^{-1})^T\il^2\il\bs{\Lambda}_\tau \bs{w}\il^2\\
&=\lambda_{\max}(\bs{\widetilde{B}}^{-1})\bs{w}^T\bs{\Lambda}_\tau^2\bs{w}\le \mathfrak{m}_1^{-1}\sum_{k=1}^n\tau_k(\bs{w}^k)^2.
\end{align*}
Consequently, we have
\begin{align*}
 \sum_{k,j}^{n,k}\theta_{k-j}^{(k)}({\bs v}^j)^T{\bs w}^k\le \varepsilon \sum_{k,j}^{n,k}\theta_{k-j}^{(k)}({\bs v}^j)^T{\bs v}^k+
 \frac{1}{2\mathfrak{m}_1\varepsilon}\sum_{k=1}^n\tau_k(\bs{w}^k)^2~ {\rm{for~any~}} \varepsilon>0.
\end{align*}
By choosing $\varepsilon=2\epsilon/\mathfrak{m}_3$ in the above inequality and using (\ref{upper-bound}), one gets
\begin{align*}
\sum_{k,j}^{n,k}\theta_{k-j}^{(k)}({\bs v}^j)^T{\bs w}^k\le\epsilon\sum_{k=1}^{n}\tau_k(\bs{v}^k)^T\bs{v}^k+
\frac{\mathfrak{m}_{3}}{4\mathfrak{m}_1\epsilon}\sum_{k=1}^{n}\tau_k(\bs{w}^k)^T\bs{w}^k.
\end{align*}

}
{\bf The proof of Lemma \ref{nonlinear-embedding inequality}} For the fixed time index $n,$ by taking $\bs{v}^j:=u^j\bs{z}^j$ and $\varepsilon:=\varepsilon_1$ in Lemma \ref{lem: DOC property2}, it yields
\begin{align*}
\sum_{k,j}^{n,k}\theta_{k-j}^{(k)}\langle u^j\bs{z}^j,\bs{w}^k\rangle\le \varepsilon_1\sum_{k=1}^n\tau_k\|u^k\bs{z}^k\|^2
+\frac{\mathfrak{m}_3}{4\mathfrak{m}_1\varepsilon_1}\sum_{k=1}^{n}\tau_k\|\bs{w}^k\|^2.
\end{align*}
With the help of H\"{o}lder inequality, one has $\|u^k\bs{z}^k\|\le\|u^k\|_{l^3}\|\bs{z}^k\|_{l^6}.$
The embedding inequality $\|\bs z^k\|_{l^6}^2\le C_\Omega(\|\nabla_h\bs z^k\|^2+\|\bs z^k\|^2)$ yields
\begin{align*}
\sum_{k=1}^n\tau_k\|u^k\bs z^k\|^2&\le\sum_{k=1}^n\tau_k\|u^k\|^2_{l^3}\|\bs z^k\|_{l^6}^2\\&\le C_\Omega\sum_{k=1}^n\tau_k\|u^k\|_{l^3}^2(\|\nabla_h\bs z^k\|^2+\|\bs z^k\|^2)\\
&\le C_u^2C_\Omega\sum_{k=1}^n\tau_k(\|\nabla_h\bs z^k\|^2+\|\bs z^k\|^2).
\end{align*}
It follows from the above inequality that
\begin{align}\label{lem:inequality}
\sum_{k,j}^{n,k}\theta_{k-j}^{(k)}\langle u^j\bs z^j,\bs w^k\rangle\le \varepsilon_1C_u^2C_\Omega\sum_{k=1}^n\tau_k
(\|\nabla_h\bs z^k\|^2+\|\bs z^k\|^2)
+\frac{\mathfrak{m}_3}{4\mathfrak{m}_1\varepsilon_1}\sum_{k=1}^{n}\tau_k\|\bs{w}^k\|^2.
\end{align}
Taking $\varepsilon=\varepsilon_1C_u^2C_\Omega$, one gets the claimed inequality.

{\bf The proof of Lemma \ref{gradient-bound}}~~~{
For any integers $1\le s\le m\le M,$ we have
\begin{align*}
&|\Delta_xu_{mj}|^3-|\Delta_xu_{sj}|^3
=\sum_{i=s}^{m-1}\braB{|\Delta_xu_{i+1,j}|^3-|\Delta_xu_{ij}|^3}\\
=&\sum_{i=s}^{m-1}\braB{|\Delta_xu_{i+1,j}|-|\Delta_xu_{ij}|}\braB{|\Delta_xu_{i+1,j}|^2
+|\Delta_xu_{ij}|\cdot|\Delta_xu_{i+1,j}|+|\Delta_xu_{ij}|^2}\\
\le&2h\sum_{i=s}^{m-1}\absB{\frac{\Delta_xu_{i+1,j}-\Delta_xu_{ij}}{h}}\cdot
\braB{|\Delta_xu_{i+1,j}|^2+|\Delta_xu_{ij}|^2}\\
=&h\sum_{i=s}^{m-1}\absb{\delta_x^2u_{i+1,j}+\delta_x^2u_{ij}}\cdot
\braB{|\Delta_xu_{i+1,j}|^2+|\Delta_xu_{ij}|^2}\\
\le&4\braB{h\sum_{i=1}^{M}\absb{\delta_x^2u_{ij}}^2}^{\frac{1}{2}}\braB{h\sum_{i=1}^{M}\absb{\Delta_xu_{ij}}^4}^{\frac{1}{2}}.
\end{align*}
It is easy to verify the above inequality also holds for $m\le s$. Then, it follows that
$$|\Delta_xu_{mj}|^3\le4\braB{h\sum_{i=1}^{M}\absb{\delta_x^2u_{ij}}^2}^{\frac{1}{2}}
\braB{h\sum_{i=1}^{M}\absb{\Delta_xu_{ij}}^4}^{\frac{1}{2}}+|\Delta_xu_{sj}|^3,~~1\le m, s\le M. $$

Multiplying the above inequality by $h$ and summing up $s$ from 1 to $M,$ it yields
$$L|\Delta_xu_{mj}|^3\le4L\braB{h\sum_{i=1}^{M}\absb{\delta_x^2u_{ij}}^2}^{\frac{1}{2}}
\braB{h\sum_{i=1}^{M}\absb{\Delta_xu_{ij}}^4}^{\frac{1}{2}}+h\sum_{i=1}^M|\Delta_xu_{ij}|^3.$$
The above inequality holds for $m=1,2,\ldots, M,$ one can get
$$\max_{1\leq m\leq M}|\Delta_xu_{mj}|^3\le4\braB{h\sum_{i=1}^{M}\absb{\delta_x^2u_{ij}}^2}^{\frac{1}{2}}
\braB{h\sum_{i=1}^{M}\absb{\Delta_xu_{ij}}^4}^{\frac{1}{2}}+\frac{1}{L}h\sum_{i=1}^M|\Delta_xu_{ij}|^3.$$
Multiplying the above inequality by $h$, and summing up $j$ from 1 to $M,$ we have
\begin{align}
h\sum_{j=1}^M\max_{1\leq m\leq M}|\Delta_xu_{mj}|^3
&\le 4h\sum_{j=1}^M\braB{h\sum_{i=1}^{M}\absb{\delta_x^2u_{ij}}^2}^{\frac{1}{2}}
\braB{h\sum_{i=1}^{M}\absb{\Delta_xu_{ij}}^4}^{\frac{1}{2}}+\frac{1}{L}\|\Delta_xu\|_3^3\nonumber\\
&\le4\braB{h^2\sum_{i=1}^{M}\sum_{j=1}^{M}\absb{\delta_x^2u_{ij}}^2}^{\frac{1}{2}}
\braB{h^2\sum_{i=1}^{M}\sum_{j=1}^{M}\absb{\Delta_xu_{ij}}^4}^{\frac{1}{2}}+\frac{1}{L}\|\Delta_xu\|_3^3.\label{ineq:bound1}
\end{align}
By Cauchy-Schwarz inequality, we obtain
\begin{align*}
 \|\Delta_xu\|_3^3 \le \braB{h^2\sum_{i=1}^{M}\sum_{j=1}^{M}\absb{\Delta_xu_{ij}}^2}^\frac{1}{2}\cdot
 \braB{h^2\sum_{i=1}^{M}\sum_{j=1}^{M}\absb{\Delta_xu_{ij}}^4}^\frac{1}{2}
 =\|\Delta_x u\|\cdot\|\Delta_x u\|_4^2.
\end{align*}
Substituting the above inequality into (\ref{ineq:bound1}), we have
\begin{align}
h\sum_{j=1}^M\max_{1\leq m\leq M}|\Delta_xu_{mj}|^3\leq \|\Delta_x u\|_4^2\braB{4\|\delta_x^2u\|+\frac{1}{L}\|\Delta_x u\|}.\label{ineq:bound2}
\end{align}
Besides, it is also valid
\begin{align*}
|\Delta_xu_{im}|^3-|\Delta_xu_{is}|^3
=\sum_{j=s}^{m-1}\braB{|\Delta_xu_{i,j+1}|^3-|\Delta_xu_{ij}|^3}.
\end{align*}
Similar to the previous process, it yields
\begin{align}
h\sum_{j=1}^M\max_{1\leq m\leq M}|\Delta_xu_{mj}|^3\leq \|\Delta_x u\|_4^2\braB{4\|\delta_x\delta_yu\|+\frac{1}{L}\|\Delta_x u\|}.\label{ineq:bound3}
\end{align}

We now estimate $\|\Delta_xu\|_6$. Using Cauchy-Schwarz inequality, we have
\begin{align*}
&h^2\sum_{i=1}^M\sum_{j=1}^M|\Delta_xu_{ij}|^6
=h\sum_{i=1}^M\braB{h\sum_{j=1}^M|\Delta_xu_{ij}|^3\cdot|\Delta_xu_{ij}|^3}\\
\le&h\sum_{i=1}^M\braB{\max_{1\le j\le M}|\Delta_xu_{ij}|^3\cdot h\sum_{j=1}^M|\Delta_xu_{ij}|^3}\\
\le&\braB{h\sum_{i=1}^M\max_{1\le j\le M}|\Delta_xu_{ij}|^3}\braB{h\sum_{j=1}^M\max_{1\le i\le M}|\Delta_xu_{ij}|^3}
\end{align*}
Substituting (\ref{ineq:bound2}) and (\ref{ineq:bound3}) into above inequality, it leads to
\begin{align*}
\|\Delta_xu\|_6^6&\le \|\Delta_x u\|_4^4\braB{4\|\delta_x^2u\|+\frac{1}{L}\|\Delta_x u\|}\braB{4\|\delta_x\delta_yu\|+\frac{1}{L}\|\Delta_x u\|}\\
&\le 5\|\nabla_h u\|_4^4\braB{4\|\Delta_hu\|^2+\frac{1}{L^2}\|\nabla_h u\|^2}.
\end{align*}
Similarly, we obtain $$\|\Delta_yu\|_6^6\le5 \|\nabla_h u\|_4^4\braB{4\|\Delta_hu\|^2+\frac{1}{L^2}\|\nabla_h u\|^2}.$$
Then it follows that
\begin{align*}
\|\nabla_hu\|_6^6&=h^2\sum_{i=1}^M\sum_{j=1}^M\braB{|\Delta_xu_{ij}|^2+|\Delta_yu_{ij}|^2}^3\\&
\le 4h^2\sum_{i=1}^M\sum_{j=1}^M\braB{|\Delta_xu_{ij}|^6+|\Delta_yu_{ij}|^6}\\
&\le 40\|\nabla_h u\|_4^4\braB{4\|\Delta_hu\|^2+\frac{1}{L^2}\|\nabla_h u\|^2}.
\end{align*}
Following the same procedure of the proof for $\|\nabla_hu\|_6$, we obtain the estimate of $\|\nabla_hu\|_4$
$$\|\nabla_hu\|_4\le C_2 \|\nabla_hu\|^\frac{1}{2}\Big(2\|\Delta_hu\|^2+\frac{1}{L^2}\|\nabla_hu\|^2\Big)^\frac{1}{4},$$
where $C_2$ is a constant.
 The above two inequalities imply that there exists a constant $K$ such that
\begin{align*}
\|\nabla_hu\|_6\le K\|\nabla_hu\|^{\frac{1}{3}}\braB{4\|\Delta_hu\|^2+\frac{1}{L^2}\|\nabla_hu\|^2}^{\frac{1}{3}}.
\end{align*}
}


\end{document}